\theoremstyle{plain}
\newtheorem{thm}{Theorem}[section]
\newtheorem{cor}[thm]{Corollary}
\newtheorem{lem}[thm]{Lemma}
\newtheorem{prop}[thm]{Proposition}
\newtheorem{rem}[thm]{Remark}
\newtheorem{defi}[thm]{Definition}
\def\sqr#1#2{{\vcenter{\vbox{\hrule height.#2pt
              \hbox{\vrule width.#2pt height#1pt \kern#1pt \vrule
width.#2pt}
              \hrule height.#2pt}}}}
\def\Sp{{\mathrm {Sp}}}
\def\lb{\label}
\def\d{{d\over dt}}
\def\3n{\negthinspace \negthinspace \negthinspace }
\def\2n{\negthinspace \negthinspace }
\def\1n{\negthinspace }
\def\R{{\mathbb R}}
\def\D{\Delta}
\def\ga{{\gamma}}
\def\cA{{\cal A}}
\def\cB{{\cal B}}
\def\cF{{\cal F}}
\def\cH{{\cal H}}
\def\cI{{\cal I}}
\def\cN{{\cal N}}
\def\no{\noindent}
\def\bs{\bigskip}
\def\span{\hbox{\rm span$\,$}}
\def\({\Big (}
\def\){\Big )}
\def\[{\Big[}
\def\]{\Big]}
\def\be{\begin{equation}}
\def\bel{\begin{equation}\label}
\def\ee{\end{equation}}
\def\bea{\begin{eqnarray}}
\def\eea{\end{eqnarray}}
\def\bt{\begin{theorem}}
\def\et{\end{theorem}}
\def\bc{\begin{corollary}}
\def\ec{\end{corollary}}
\def\bl{\begin{lemma}}
\def\el{\end{lemma}}
\def\bp{\begin{proposition}}
\def\ep{\end{proposition}}
\def\br{\begin{remark}}
\def\er{\end{remark}}
\def\ba{\begin{array}}
\def\ea{\end{array}}
\def\bd{\begin{definition}}
\def\ed{\end{definition}}
\begin{document}

\title{\bf Hill-type formula for Hamiltonian system with Lagrangian boundary conditions}

\author[1]{Xijun Hu\thanks{Partially supported
by NSFC(No.11425105), E-mail:xjhu@sdu.edu.cn } }
\author[2]{Yuwei Ou\thanks{Partially supported
by NSFC(No.11671215,11571200), E-mail: ouyw3@mail.sysu.edu.cn} }
\author[1]{Penghui Wang\thanks{ Partially supported by NSFC(No.11471189),
 E-mail: phwang@sdu.edu.cn }}
\affil[1]{Department of Mathematics, Shandong University, Jinan, Shandong\authorcr
The People's Republic of China}
\affil[2]{School of Mathematics(Zhuhai),  Sun Yat-Sen University, Zhuhai, Guangdong\authorcr
 The People's Republic of China}


\maketitle
\begin{abstract}
In this paper, we build up Hill-type formula for linear Hamiltonian systems with Lagrangian boundary conditions, which include standard Neumann, Dirichlet boundary conditions. Such a kind of boundary conditions comes from the brake symmetry periodic orbits in $n$-body problem naturally.
The Hill-type formula connects the infinite determinant of the Hessian of the action functional with the determinant of  matrices which depend on the monodromy matrix and boundary conditions. Consequently, we  derive the Krein-type trace formula and give nontrivial estimation for the eigenvalue problem.
Combined with the Maslov-type index theory, we give some new  stability criteria for the brake symmetry periodic  solutions of Hamiltonian systems.  As an  application, we study the linear stability of  elliptic relative equilibria  in planar $3$-body problem.
 \end{abstract}

\bs

\no{\bf 2010 Mathematics Subject Classification}: 34L15,  34B09,  37C75, 70H14

\bs

\no{\bf Key Words}. Hill-type formula; Trace formula;  Conditional Fredholm determinant; Linear stability; Planar $3$-body problem

\section{Introduction}

In the present paper, we will study the eigenvalue problems of Hamiltonian systems with Lagrangian boundary conditions. Let $J=\left(\begin{array}{cc}0&-I_n\\
                                I_n&0\end{array}\right)$, the standard symplectic structure $\omega(x,y)$ on $\mathbb R^{2n}$ is defined by
$$
\omega(x,y)=\langle Jx,y\rangle.
$$
A Lagrangian subspace $V$ of $(\mathbb R^{2n},\omega)$ is an isotropic subspace of dimension $n$, that is,  for any $x,y\in V$, $\omega(x,y)=0$. Denote by $Lag(2n)$ the set of Lagrangian subspaces of $\mathbb R^{2n}$. For $V_0,V_1\in Lag(2n)$, the eigenvalue problem of linear Hamiltonian system with Lagrangian boundary condition   is to find $\lambda\in\mathbb C$ satisfying that the  nontrivial solutions of the following system exist,

\begin{eqnarray}
&&\dot{z}(t)= J(B(t)+\lambda D(t))z(t), \label{1.1e}\\
&&x(0)\in V_0,\quad x(T)\in V_1,\label{1.1f}
\end{eqnarray}
where $B,D\in C([0,T];\mathcal{S}(2n))$, the set of continuous paths of symmetric matrices. $B$ and  $D$ can be considered as bounded operators on $\cH:=L^2([0,T];\mathbb C^{2n})$, defined by $(Bx)(t)=B(t)x(t)$ and $(Dx)(t)=D(t)x(t)$.



  Unlike  the periodic boundary conditions, the Lagrangian boundary conditions are separated boundary conditions which include the Dirichlet, Neumann. However, periodic boundary conditions are closely related to Lagrangian boundary conditions. In fact, we will see that, for a periodic solution with brake symmetry, it is natural to obtain the Lagrangian boundary condition. Readers are referred to \cite{HS} and references therein to find the background of brake symmetry in $n$-body problem.

Denote by $A|_E=-J\frac{d}{dt}$ which is  densely defined on $\cH$ with the domain $E$. Obviously, the property of $A|_{E}$ depends on its domain $E$ heavily. The choice of $E$ is based on the boundary condition. For instance,  when we consider the Lagrangian boundary problem (\ref{1.1e})-(\ref{1.1f}) ,  the domain $E_{V_0,V_1}$ is defined as
$$
E_{V_0,V_1}=\left\{\left.z(t)\in W^{1,2}([0,T];\mathbb C^{2n})\,\right|\,z(0)\in V_0,\,  z(T)\in V_1\right\}.
$$

 It is well known that for $\lambda\in\rho(A|_E)$, the resolvent $(A|_E-\lambda)^{-1}$ is not a trace class operator, but a Hilbert-Schmidt operator.  Generally, in the case that $A|_{E}-B$ is non-degenerate, to simplify the notation,  we set
\begin{eqnarray}
\mathcal{F}(B,D;E)=D(A|_E-B)^{-1},\label{short} \nonumber
\end{eqnarray}
and  without of confusion, we will write \begin{eqnarray}A=A|_{E_{V_0,V_1}},\quad \mathcal{F}(B,D)(=\cF)=\mathcal{F}(B,D;E_{V_0,V_1})\label{shortA}.\nonumber \end{eqnarray}
Similar to the resolvent $(A|_E-\lambda)^{-1}$, in general  $\mathcal{F}(B,D;E)$ is not a trace class operator, but  a Hilbert-Schmidt operator.
It follows that the Fredholm determinant $\det(I-\cF(B,D;E))$ is not well-defined, instead we will use the definition of conditional Fredholm determinant, which was introduced in \cite{HW}. Some details will be recalled in Section 2.1. It is worth to be pointed out that there is another way to define the infinite dimensional determinant, which is defined by zeta function \cite{RaS,For}.

The first study of Hill formula was given by G.Hill in \cite{Hi} when he studied the motion of of lunar perigee, the strict mathematical proof of Hill's formula was given by H. Poincar\'{e} \cite{Po}.  There are many efforts on Hill-type formula were done, such as \cite{B,BT,Dr,HW,HW1,HW16} and references therein.
To state Hill-type formula, we
need some notations.   Suppose $\Lambda\in Lag(2n)$, a Lagrangian
frame for $\Lambda$ is a linear map $Z: \mathbb R^n \rightarrow
\mathbb R^{2n}$ whose image is $\Lambda$. It is easy to see that the
frame is of the form $Z=\left(\begin{array}{cc}X
\\ Y \end{array}\right)$, where $X,Y$ are $n\times n$ matrices and
satisfied $$ X^TY=Y^TX .$$  Denote  $\gamma_\lambda(t)$  the
fundamental solution of (\ref{1.1e}), that is
$\dot{\gamma}_\lambda(t)=J(B+\lambda D)(t)\gamma_\lambda(t)$ with
$\gamma_\lambda(0)=I_{2n}$.   Let  $$\Sp(2n):=\{M\in GL(\R^{2n})| M^TJM=J\}$$ be the symplectic group, it is well known that $\gamma(t)\in\Sp(2n)$.
Let $Z_0,Z_1$ be frames
of $\Lambda_0,\Lambda_1$. Obviously, $\gamma_\lambda(T) Z_0$ are
frames of $\ga_\lambda(T) \Lambda_0 $ and $(\ga_\lambda(T) Z_0,Z_1)$
are   $2n\times 2n$ matrices. We have the following Hill-type formula for  Hamiltonian system (\ref{1.1e})-(\ref{1.1f}).

\begin{thm}\label{thm1.1}
Assume that $A-B$ is non-degenerate, then
\begin{eqnarray}\label{t.1a1}
\det(I-\cF(B,D))=\det (\ga_{1}(T) Z_0,Z_1) \cdot
\det(\ga_0(T) Z_0,Z_1)^{-1},     \end{eqnarray}
where the left side
is the conditional Fredholm determinant, and the right side is independent on the choice of the frames $Z_0, Z_1$.
\end{thm}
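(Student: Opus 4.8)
The plan is to deform $D$ to $\lambda D$, reducing the statement to an identity between two analytic functions of $\lambda$ that I will match through their logarithmic derivatives. Put
$$\phi(\lambda)=\det{}_c\big(I-\cF(B,\lambda D)\big),\qquad \psi(\lambda)=\det(\ga_\lambda(T)Z_0,Z_1)\cdot\det(\ga_0(T)Z_0,Z_1)^{-1},$$
where $\det{}_c$ is the conditional Fredholm determinant recalled in Section 2.1. Because $\cF(B,\lambda D)=\lambda\,\cF(B,D)$ and $I-\lambda D(A-B)^{-1}=\big(A-(B+\lambda D)\big)(A-B)^{-1}$, the family $I-\cF(B,\lambda D)$ is exactly the one attached to the Hamiltonian matrix $B+\lambda D$, whose fundamental solution is $\ga_\lambda$. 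Both functions are entire in $\lambda$: $\psi$ since $\ga_\lambda(T)$ depends analytically on $\lambda$ and the denominator is the nonzero constant guaranteed by the non-degeneracy of $A-B$, and $\phi$ since $\cF(B,D)$ is Hilbert--Schmidt, so the Hilbert--Carleman determinant together with the conditional-trace correction depends analytically on $\lambda$. As $\phi(0)=\psi(0)=1$, it suffices to prove $\phi\equiv\psi$ and then set $\lambda=1$.

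First I would match the zeros. The discussion preceding the theorem shows that $\det(\ga_\lambda(T)Z_0,Z_1)=0$ precisely when the two-point problem $\dot z=J(B+\lambda D)z$, $z(0)\in V_0$, $z(T)\in V_1$ has a nontrivial solution, that is when $\ker\big(A-(B+\lambda D)\big)\neq\{0\}$. The same values of $\lambda$ are the zeros of $\phi$, because $v\mapsto(A-B)^{-1}v$ is an isomorphism from $\ker\big(I-\lambda\cF(B,D)\big)$ onto $\ker\big(A-(B+\lambda D)\big)$. I would then check that the two functions vanish to the same order by identifying each order with the algebraic multiplicity of $\lambda$ as an eigenvalue of the boundary value problem. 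It follows that $\phi/\psi$ continues to a zero-free entire function, so the proof reduces to showing that its logarithmic derivative vanishes.

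The analytic heart of the argument is the logarithmic-derivative identity. On the operator side, the defining property of $\det{}_c$ together with $\cF(B,D)\big(I-\lambda\cF(B,D)\big)^{-1}=D\big(A-(B+\lambda D)\big)^{-1}$ gives
$$\frac{\phi'(\lambda)}{\phi(\lambda)}=-\,\mathrm{tr}_c\Big[D\,\big(A-(B+\lambda D)\big)^{-1}\Big],$$
where $\mathrm{tr}_c$ is the conditional trace. I would evaluate the right-hand side through the Green's function $G_\lambda(t,s)$ of the problem $\big(A-(B+\lambda D)\big)z=v$, which variation of parameters builds explicitly from $\ga_\lambda$ and the boundary data, the matrix $(\ga_\lambda(T)Z_0,Z_1)^{-1}$ entering when the two-point conditions are solved. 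The conditional trace then collapses to the finite integral $\int_0^T\tr\big(D(t)\,\overline G_\lambda(t,t)\big)\,dt$, with $\overline G_\lambda(t,t)$ the symmetric average of $G_\lambda$ across its diagonal jump. On the finite-dimensional side, differentiating the determinant and using the variational equation
$$\partial_\lambda\ga_\lambda(T)=\ga_\lambda(T)\int_0^T\ga_\lambda(s)^{-1}JD(s)\ga_\lambda(s)\,ds$$
yields $\psi'(\lambda)/\psi(\lambda)=\tr\big[(\ga_\lambda(T)Z_0,Z_1)^{-1}\big(\partial_\lambda\ga_\lambda(T)Z_0,\,0\big)\big]$. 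The crux is to verify that these two expressions coincide, and this is where I expect the main difficulty: one must reconcile the conditionally convergent diagonal integral of $G_\lambda$ with the finite-dimensional trace, and in particular treat the jump of $G_\lambda$ on the diagonal correctly. It is exactly here that the conditional regularization of the determinant is indispensable, since the naive Fredholm trace diverges.

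Once $\phi'/\phi=\psi'/\psi$ is established off the common zeros and the orders there have been matched, $\phi/\psi$ is a zero-free entire function with vanishing logarithmic derivative, hence constant; evaluating at $\lambda=0$ fixes the constant to be $1$, and $\lambda=1$ gives \eqref{t.1a1}. Finally, independence of the frames is immediate: replacing $Z_0,Z_1$ by $Z_0P,Z_1Q$ with $P,Q$ invertible multiplies $(\ga_\lambda(T)Z_0,Z_1)$ on the right by $\mathrm{diag}(P,Q)$, so both $\det(\ga_1(T)Z_0,Z_1)$ and $\det(\ga_0(T)Z_0,Z_1)$ acquire the factor $\det P\,\det Q$, which cancels in the quotient.
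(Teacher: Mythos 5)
Your overall architecture coincides with the paper's: both set $g(\alpha)=\det(I-\alpha\cF(B,D))$ against $f(\alpha)=\det(\ga_\alpha(T)Z_0,Z_1)\det(\ga_0(T)Z_0,Z_1)^{-1}$, observe both are entire and equal $1$ at the origin, and reduce everything to the logarithmic-derivative identity $g'/g=f'/f$, whose content is that the conditional trace of $D\big(A-(B+\alpha D)\big)^{-1}$ equals the finite-dimensional trace produced by Jacobi's formula and the variational equation for $\partial_\alpha\ga_\alpha(T)$. You have also correctly located the regularization: the paper's Definition 3.1 and Lemma 3.2 show the relevant diagonal value of the kernel $JD\gamma_0(t)Q\gamma_0^{-1}(t')$ can be taken as any convex combination across the jump (in particular the symmetric average), because $\tr(JD)=0$.

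The genuine gap is that the step you flag as ``the crux'' is precisely the theorem's technical content, and you leave it unproved. The identity $\mathrm{tr}_c\big[D(A-B-\lambda D)^{-1}\big]=\mathrm{Tr}\int_0^T JD\,\ga\, Q\,\ga^{-1}\,dt$ is not a formal consequence of anything in Section 2.1: the operator is only Hilbert--Schmidt, the conditional trace is defined through a \emph{specific} exhaustion $P_N$ by eigenspaces of $A$, and there is no a priori reason this limit agrees with the regularized diagonal integral of the Green's kernel. The paper spends all of Section 2.2 establishing this for the model case $B=\nu I_{2n}$ by expanding in the explicit eigenbasis $e_{j,k}=e^{\lambda_{j,k}Jt}e_j$, splitting $D=\hat D+\check D$ according to (anti)commutation with $J$, and summing the resulting conditionally convergent series via Fej\'er's theorem and $\lim_N\sum_{|k|\le N}(\nu+k\pi/T)^{-1}=T\cot(T\nu)$; the general case then follows because $\cF(B,D)-\cF(\nu,D)$ is trace class with continuous kernel, so its trace is the honest diagonal integral (Corollary 3.5, citing Gohberg--Goldberg--Krupnik). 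Without this computation your argument is a correct reduction, not a proof. A secondary remark: your plan to match the \emph{orders} of the zeros of $\phi$ and $\psi$ is both harder than you suggest (relating the order of vanishing of $\det(\ga_\lambda(T)Z_0,Z_1)$ to an algebraic multiplicity is essentially equivalent to the Hill formula itself) and unnecessary --- once $g'/g=f'/f$ holds off a discrete set, $g/f$ is constant on the connected complement and continuity extends $g=f$ everywhere, which is all the paper uses.
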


\begin{rem}
 In \cite{HW16}, Hu and Wang  obtained  Hill-type formula for Sturm-Liouville system with separated boundary conditions.
It should be pointed out that, there is essential difference between \cite[Theorem 1.1]{HW16} and Theorem \ref{thm1.1}. More precisely, the differential operators in \cite[Theorem 1.1]{HW16} has trace class resolvent, and hence the classical Fredholm determinant can be defined. Therefore, in \cite{HW16} the techniques from complex analysis can be used, however such techniques do not work well here. Theorem \ref{thm1.1} will be proved by using the theory of integral operators.
\end{rem}

As mentioned above, by brake symmetry, solutions of Lagrangian boundary problem is  closely  related to  that of $S$-periodic boundary problem, where $S\in \Sp(2n)\cap O(2n)$ (the symplectic orthogonal group).
More precisely, we consider the following eigenvalue problem with  $S$-periodic conditions
\begin{eqnarray} \dot{z}(t)= J(B(t)+\lambda D(t))z(t), \quad  z(0)=Sz(T).\label{4.1}\nonumber
\end{eqnarray}
In \cite{HW}, the following Hill-type formula for $S$-boundary conditions was obtained
 \bea \det(I-\cF(B,D; E_S))=\det (S\ga_{1}(T)-I) \cdot
\det(S\ga_0(T)-I)^{-1},   \label{hilS}\nonumber \eea
where  $$E_S=\{z\in W^{1,2}([0.T],\R^{2n})| z(0)=Sz(T) \}.$$
Suppose there exists $N\in O(2n)$ such that $N^2=I_{2n}$,  $NJ=-JN$
 and $NS^T=SN$.   Then  we define
 \bea g:E\to E,\quad z(t)\mapsto Nz(T-t) \nonumber \eea
which generates a $\mathbb Z_2$ group action on $E_S$.  Obviously, $A|_{E_S}g=gA|_{E_S}$.  Suppose
\bea NB(T-t)=B(t)N,\quad  ND(T-t)=D(t)N, \nonumber  \eea  then $Bg=gB$.
Therefore
\bea (A|_{E_S}-B-\lambda D)g=g(A|_{E_S}-B-\lambda D),\quad for \quad \lambda\in\R.  \lb{4.4}\nonumber \eea
Let $V^+(SN), V^+(N)$ and $V^-(SN), V^-(N)$ be the positive and negative definite subspaces of $SN$ and $N$ respectively, then  $V^\pm(SN)$ and $V^\pm(N)$ are all Lagrangian subspaces of $(\R^{2n},\omega)$.

 Let
\bea \bar{E}^\pm_S=\{z\in E_S, gz=\pm z   \},\label{barespm} \eea  which are isomorphic to
  \bea E^\pm_S=\{z\in W^{1,2}([0,T/2], \mathbb C^{2n}), z(0)\in V^\pm(SN), z(T/2)\in V^\pm(N)   \}.   \lb{espm}\eea
We have the following decomposition formula, which build the relationship between the Hill-type formula of $S$-periodic boundary problem and  that of Lagrangian boundary problem.

\begin{thm}
Under the above conditions, we have \bea \det(I-\cF(B,D;E_S))=\det(I-\cF(B,D;E^+_S)\cdot \det(I-\cF(B,D;E^-_S)).  \label{hilde}\nonumber\eea
\end{thm}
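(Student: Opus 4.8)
The plan is to exploit the $\mathbb{Z}_2$-symmetry generated by $g$ to split the Hilbert space and then to show that the conditional Fredholm determinant is multiplicative with respect to this splitting. First I would record that, since $N\in O(2n)$ with $N^2=I_{2n}$, the map $g:z(t)\mapsto Nz(T-t)$ extends to a self-adjoint involution on $\cH=L^2([0,T];\mathbb C^{2n})$: using $N^T=N$ and a change of variables $s=T-t$ one checks $g^*=g$, and $N^2=I_{2n}$ gives $g^2=I$. Hence $\cH$ decomposes orthogonally as $\cH=\cH^+\oplus\cH^-$ into the $(\pm1)$-eigenspaces of $g$, with $\bar{E}^{\pm}_S=E_S\cap\cH^\pm$ dense in $\cH^\pm$ as in \eqref{barespm}. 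The hypotheses $NB(T-t)=B(t)N$ and $ND(T-t)=D(t)N$ translate exactly into $Bg=gB$ and $Dg=gD$, and together with $A|_{E_S}g=gA|_{E_S}$ they imply that $A|_{E_S}-B$ commutes with $g$; consequently so does $\cF=D(A|_{E_S}-B)^{-1}$. Thus $\cF$ preserves each summand and is block-diagonal, $\cF=\cF^+\oplus\cF^-$ with $\cF^\pm:=\cF|_{\cH^\pm}$.

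Second, I would prove the multiplicativity
$$\det(I-\cF)=\det(I-\cF^+)\cdot\det(I-\cF^-).$$
Because $[g,A|_{E_S}]=0$, the eigenfunctions of $A|_{E_S}$ may be chosen to lie entirely in $\cH^+$ or in $\cH^-$. Building the finite-rank spectral truncations $P_m$ out of this $g$-adapted basis, each $P_m$ commutes with the orthogonal projections onto $\cH^\pm$, so at every finite stage the ordinary determinant factors as $\det(I-P_m\cF P_m)=\det(I-P_m^+\cF^+P_m^+)\cdot\det(I-P_m^-\cF^-P_m^-)$, where $P_m^\pm=P_m|_{\cH^\pm}$. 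Passing to the limit in the definition of the conditional Fredholm determinant recalled in Section 2.1 then yields the displayed identity.

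Third, I would identify $\cF^\pm$ with $\cF(B,D;E^\pm_S)$. For $z\in\bar{E}^{\pm}_S$ the condition $gz=\pm z$ is equivalent to $z(T-t)=\pm Nz(t)$; evaluating at $t=T/2$ gives $z(T/2)\in V^\pm(N)$, while combining $z(0)=Sz(T)$ with $z(T)=\pm Nz(0)$ gives $z(0)\in V^\pm(SN)$. Hence restriction to $[0,T/2]$ is a linear isomorphism $\bar{E}^{\pm}_S\to E^\pm_S$, whose inverse is the symmetric extension $\Phi:w(t)\mapsto\pm Nw(T-t)$ on $[T/2,T]$, continuity at $T/2$ holding precisely because $w(T/2)\in V^\pm(N)$. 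Since $N\in O(2n)$ one computes $\|\Phi w\|^2=2\|w\|^2$, so $\tfrac{1}{\sqrt{2}}\Phi$ is unitary, and a short calculation using $NJ=-JN$ together with $B(t)N=NB(T-t)$, $D(t)N=ND(T-t)$ shows that $\Phi$ intertwines $-J\frac{d}{dt}-B$ on $E^\pm_S$ with $A|_{E_S}-B$ on $\bar{E}^{\pm}_S$ and $D$ with $D$. Therefore $\cF^\pm=\Phi\,\cF(B,D;E^\pm_S)\,\Phi^{-1}$ is unitarily conjugate (up to the harmless scalar) to $\cF(B,D;E^\pm_S)$, so the conditional Fredholm determinants agree; combining the three steps gives the asserted factorization.

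The main obstacle is the second step. The conditional Fredholm determinant is only a regularized limit of finite truncations and is a priori sensitive to the choice of approximating projections, so the substance of the argument is to verify that the $g$-adapted spectral truncation both factors at each finite stage and computes the conditional determinant on each summand consistently with its definition on the half-interval spaces. This is exactly where the commutation $[g,A|_{E_S}]=0$, and hence the simultaneous diagonalizability of $A|_{E_S}$ and $g$, is indispensable; I would also need to confirm that the unitary $\tfrac{1}{\sqrt{2}}\Phi$ carries the spectral decomposition of $A|_{E_S}$ on $\bar{E}^{\pm}_S$ to that of $A|_{E^\pm_S}$, so that the regularization transported through $\Phi$ matches the one used to define $\cF(B,D;E^\pm_S)$.
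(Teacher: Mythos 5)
Your proposal is correct and follows essentially the same route as the paper: it likewise splits $E_S$ into the $\pm1$-eigenspaces $\bar E^\pm_S$ of $g$, uses the multiplicativity of the conditional Fredholm determinant over the resulting direct sum, and identifies $\det(I-\cF(B,D;\bar E^\pm_S))$ with $\det(I-\cF(B,D;E^\pm_S))$ via the restriction/extension isomorphism, checking that both ${\det}_2$ and the conditional trace (i.e.\ the regularization through the spectral truncations of $A$) are preserved. The consistency-of-regularization issue you single out as the main obstacle is exactly what the paper's Lemma 4.1 verifies.
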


By the similar idea to \cite{HOW}, using $\lambda \cF(B,D)$ instead of $\cF(B,D)$, and taking Taylor expansion on $\lambda$ for  both sides of Hill-type formula (\ref{t.1a1}), we have the trace formula. Trace formula is a powerful tool in studying the eigenvalue problem, and hence it is very useful to study the stability problem.  The  trace formula was first established by Krein in 1950's \cite{K1,K2}.  Recently, Hu and Wang give  the generalization to the   Sturm-Liuville system with general separated boundary conditions  \cite{HW}.  In this paper, we will build up the  Krein-type trace formula for Hamiltonian systems with Lagrangian boundary condition.  Please refer subsection 3.2 for the detail.

The motivation of Krein's trace formula was to study the stability problem of periodic orbits.
 Based on trace formula for $S$-periodic orbits and  Maslov index theory \cite{Lon4}, we give some new stability criteria and apply it to study the stability of $n$-body problem \cite{HOW}.
  As  continuous study,  we   use the trace formula for Hamiltonian system with Lagrangian boundary conditions to obtain  stability criteria for the brake symmetry periodic orbits. Consequently, we  give some applications on the study of planar $3$-body problem.

It  is well known that a planar central configuration of the n-body problem gives rise to solutions where each particle moves on a specific Keplerian orbit while the totality of the particles move on a homographic motion. Follows Meyer and Schmidt  \cite{MS},  we call this solution \emph{ elliptic relative equilibria} (ERE for short) if the Keplerian orbit is elliptic.    For  $n=3$, it is well known that  there are only two kind of  central configurations, the  Lagrangian equilateral triangular central configuration  and  the Euler collinear central configurations. We call the corresponding ERE are elliptic Lagrangian orbits and elliptic Euler orbits because they are first discovered by Lagrange  \cite{Lag} and Euler  \cite{Eu}.

There are many works in the study of linear stability of  elliptic Lagrangian orbits and elliptic Euler orbits.  Please refer to \cite{HLS,HOW,MS,MSS1,MSS2,LZhou,R1}  and references therein for the details.  More precisely,  let $e\in[0,1)$ be the eccentricity of the homothety Keplerian orbits of ERE.  The  linear stability of elliptic Lagrangian orbits  depends on $e$ and  $\beta\in[0,9]$ where  \be
\beta=\frac{27(m_1m_2+m_1m_3+m_2m_3)}{(m_1+m_2+m_3)^2}.\label{massratio}\nonumber\ee
Similarly,   the elliptic Euler orbits depend $e$ and $\delta$,  where  $\delta\in[0,7]$ only depends on masses $m_1, m_2, m_3$.

In \cite{HOW}, the first nontrivial estimation of the stability region and hyperbolic region of the $e,\beta$ rectangle $[0,1)\times[0,9]$ for elliptic Lagrangian orbits was given. In
Section 6, by observing  the elliptic Lagrangian orbits with brake symmetry, and we  give a  better estimation on the stability region by using the trace formula for Hamiltonian system with
Lagrangian boundary conditions. Moreover, the first nontrivial estimation of the hyperbolic region for elliptic Euler orbits will be given.

The paper is organized as follows. Section 2 is devoted to preliminaries on conditional Fredholm determinant and conditional trace. In Section 3, we will prove the Hill-type formula by using techniques in integral operators and complex analysis. Based on the Hill-type formula, we get the trace formula.  In Section 4 deal with the brake symmetry decomposition for Hill formula.  We  give some new stability criteria by  the trace formula in Section 5. At last, in Section 6, we give new estimation for  the stability region of elliptic Lagrangian orbits and estimation for hyperbolic region of  elliptic Euler orbits.

\section{Preliminaries}

In this section, we mainly introduce some fundamental notations and results which will be used later.  In Subsection \ref{sec2.1},  we give an overview on  conditional Fredholm determinant, details could be found in \cite{HW}.
 In Subsection \ref{sec2.2}, we compute the conditional trace  $\mathcal{F}(B,D)$.

\subsection{Conditional Fredholm determinant}\label{sec2.1}
In this subsection, we will mainly consider the conditional Fredholm determinant. As we have seen, the conditional Fredholm determinants of dynamical systems are our starting point to derive our trace formula.

Let $\cal {J}_\infty$ denote the family of compact operators. For $F\in \cal {J}_\infty$, let
$\mu_1\geq\mu_2\geq\cdots\geq$ be the singular values of $A$. In fact,  $\mu_j$ are eigenvalue of $|F|:=(F^*F)^{\frac{1}{2}}$.  For $p\geq1$, we denote $$ \cal {J}_p:=\{F\in \cal {J}_\infty | \sum \mu_n(F)^p<\infty \},  $$
and $ \|F\|_p:=(\sum \mu_n(F)^p)^{\frac{1}{p}} $ for $F\in  \cal {J}_p$.  Obviously, $\cal {J}_1 $ is the set of trace class operator and $\cal {J}_2$ is the set of  Hilbert-Schmidt operators. It is well known that  for $F\in\cal {J}_1 $,  the Fredholm determinant $\det(id+F)$ is well defined. However, for $F\in\cal{J}_2\setminus \cal{J}_1$, such a Fredholm determinant can not be well-defined.  Instead, the  regularized determinant
 \begin{eqnarray*}
{\det}_2 (id+F)=\det\left((id+F)e^{-F}\right)
\end{eqnarray*}
is  well-defined since $(id+F)e^{-F}-id\in \cal{J}_1$.  It is known that the regularized determinant has no multiplicative  property. Thus we hope to define a kind of conditional Fredholm determinant for $(id+F)$.

The concept of \emph{trace finite condition} plays an important role in the study of conditional Fredholm determinant.
Firstly, we will recall the definition of trace finite condition, which is introduced in \cite{HW}.
Let $\{P_k\}$ be a sequence of finite rank projections, such that the following conditions are satisfied,
\begin{itemize}
\item[(1)] for $k\leq m$, $Range(P_k)\subseteq Range (P_m)$,
\item[(2)] $P_k$ converges to $id$ in the strong operator topology.
\end{itemize}
 We denote \bea \cal{J}(P_k):=\{F\in\cal{J}_2 | \lim\limits_{k\to\infty} Tr(P_k F P_k) \, exists  \,  and \,is\, finite  \}\nonumber \eea
be the set of  operators with trace finite condition respect to $\{P_k\}$.

 It is obvious that $\cal{J}(P_k)$ is linear space and
 \bea  \cal{J}_1\subset \cal{J}(P_k)\subset \cal{J}_2. \nonumber \eea
 As been pointed in \cite{HW1}, if $F\in\cal{J}(P_k)$, then the conditional Fredholm determinant can be defined,
\begin{eqnarray}
\det(id+F)&=&\lim\limits_{k\to\infty} \det(id+P_k F P_k)\nonumber\\
          &=&\lim\limits_{k\to\infty} {\det}_2(id+P_k F P_k) e^{Tr(P_k F P_k)}\nonumber\\
          &=&{\det}_2 (id+F) \lim\limits_{k\to\infty} e^{Tr(P_k F P_k)}.\label{eq1.5a}
\end{eqnarray}
By \cite{HW1}, if $F\in\cal{J}_1$, then conditional Fredholm determinant $\det(id+F) $ is the
classical Fredholm determinant.  Moreover for $F\in\cal{J_1}$, the function $\det(id+\alpha F)$ is analytic on $\alpha$. Correspondingly,  in \cite[Lemma 3.3]{HW1}, we proved that the conditional Fredholm determinant $\det\left((A-B-\alpha D-\nu J)(A+P_0)^{-1}\right)$ is analytic on $\alpha$. Similarly, it is not hard to see that for $F\in{\cal{J}(P_k)}$, the function $\det(id+\alpha F)$, defined by conditional Fredholm determinant,  is an entire function.

At the end of this subsection, we will list some fundamental properties  of the conditional Fredholm determinant, which were proved in \cite{HW}.
\begin{prop}\label{prop2.5}
\begin{itemize}
 \item[1)] If $F_1, F_2\in \cal{J}(P_k)$, then
\begin{eqnarray*}
\det\left((id+F_1)(id+F_2)\right)=\det(id+F_1)\det(id+F_2).
\end{eqnarray*}
\item[2)] Let $E=E_1\oplus E_2$, and  $F_i\in \cal{J}(P_k^{(i)})$ for  $i=1,2$. Let   $F=F_1\oplus F_2$, then $F\in\cal{J}(P_k^{(1)}\oplus P_{k}^{(2)} )  $, and
\begin{eqnarray*}
\det(id+F)=\det(id_{E_1}+F_1)\det(id_{E_2}+F_2),
\end{eqnarray*}
where $id_{E_i}$ are identities on $E_i$, for $i=1,2$.
\end{itemize}
\end{prop}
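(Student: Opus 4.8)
The plan is to derive both identities from the finite-rank definition \eqref{eq1.5a} of the conditional Fredholm determinant, combining the multiplicative defect of the regularized $2$-determinant with the structural fact that the product of two Hilbert--Schmidt operators is trace class. I would treat part 1) as the substantive case and part 2) as a routine block-diagonal computation.

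For part 1), I would first write $(id+F_1)(id+F_2)=id+G$ with $G=F_1+F_2+F_1F_2$. Since $F_1,F_2\in\mathcal{J}_2$ we have $F_1F_2\in\mathcal{J}_1$; because $\mathcal{J}(P_k)$ is a linear space containing $\mathcal{J}_1$ as well as $F_1,F_2$, it follows that $G\in\mathcal{J}(P_k)$, so the left-hand side $\det(id+G)$ is well defined via \eqref{eq1.5a}. The key input is the classical multiplicativity defect of the regularized determinant, ${\det}_2((id+F_1)(id+F_2))={\det}_2(id+F_1)\,{\det}_2(id+F_2)\,e^{-\mathrm{Tr}(F_1F_2)}$, which is meaningful for Hilbert--Schmidt $F_1,F_2$ precisely because $F_1F_2$ is trace class. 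Next I would compute $\lim_k\mathrm{Tr}(P_kGP_k)=\lim_k\mathrm{Tr}(P_kF_1P_k)+\lim_k\mathrm{Tr}(P_kF_2P_k)+\mathrm{Tr}(F_1F_2)$, where the third limit equals $\mathrm{Tr}(F_1F_2)$ since $F_1F_2\in\mathcal{J}_1$ and $P_kCP_k\to C$ in trace norm for every trace-class $C$ when $P_k\to id$ strongly. Substituting both facts into \eqref{eq1.5a} applied to $G$, the factor $e^{-\mathrm{Tr}(F_1F_2)}$ from the $2$-determinant cancels exactly against the $e^{\mathrm{Tr}(F_1F_2)}$ produced by $\lim_k\mathrm{Tr}(P_kGP_k)$, and after regrouping one is left with $\bigl({\det}_2(id+F_1)e^{\lim_k\mathrm{Tr}(P_kF_1P_k)}\bigr)\bigl({\det}_2(id+F_2)e^{\lim_k\mathrm{Tr}(P_kF_2P_k)}\bigr)=\det(id+F_1)\det(id+F_2)$.

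For part 2), with $P_k=P_k^{(1)}\oplus P_k^{(2)}$ and $F=F_1\oplus F_2$ one has $P_kFP_k=(P_k^{(1)}F_1P_k^{(1)})\oplus(P_k^{(2)}F_2P_k^{(2)})$, hence $\mathrm{Tr}(P_kFP_k)=\mathrm{Tr}(P_k^{(1)}F_1P_k^{(1)})+\mathrm{Tr}(P_k^{(2)}F_2P_k^{(2)})$; both summands converge by hypothesis, which establishes $F\in\mathcal{J}(P_k^{(1)}\oplus P_k^{(2)})$, while $\|F\|_2^2=\|F_1\|_2^2+\|F_2\|_2^2<\infty$ gives $F\in\mathcal{J}_2$. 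Since $id_E+P_kFP_k$ is block diagonal for the splitting $E=E_1\oplus E_2$, its (finite-rank) determinant factors as $\det(id_{E_1}+P_k^{(1)}F_1P_k^{(1)})\det(id_{E_2}+P_k^{(2)}F_2P_k^{(2)})$; letting $k\to\infty$ and applying \eqref{eq1.5a} to each factor yields the stated identity.

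The main obstacle is the exactness of the cancellation in part 1): one must guarantee that every trace that appears is finite and that the two correction factors agree. Both rest on the single fact that although $F_1$ and $F_2$ are only Hilbert--Schmidt, their product $F_1F_2$ is trace class, which is exactly what makes the defect $e^{-\mathrm{Tr}(F_1F_2)}$ in the $2$-determinant identity and the limit $\lim_k\mathrm{Tr}(P_kF_1F_2P_k)=\mathrm{Tr}(F_1F_2)$ simultaneously well defined and mutually cancelling. Part 2) presents no genuine difficulty beyond verifying that the finite-rank truncations respect the direct-sum decomposition.
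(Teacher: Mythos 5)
Your proof is correct. The paper itself gives no argument for this proposition (it is quoted from \cite{HW}), and your derivation --- writing $(id+F_1)(id+F_2)=id+G$ with $F_1F_2\in\mathcal{J}_1$, invoking the multiplicativity defect ${\det}_2((id+F_1)(id+F_2))={\det}_2(id+F_1){\det}_2(id+F_2)e^{-\mathrm{Tr}(F_1F_2)}$, and cancelling it against $\lim_k\mathrm{Tr}(P_kF_1F_2P_k)=\mathrm{Tr}(F_1F_2)$ in the definition \eqref{eq1.5a} --- is exactly the standard argument used in that reference, with part 2) reducing to the block-diagonal factorization of the finite truncations as you describe.
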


\subsection{Conditional trace for operator with Lagrangian subspace boundary conditions}\label{sec2.2}

Suppose $V_0,V_1\in Lag({2n})$.
By Changing  a symplectic basis, we  may assume  $V_0, V_1$ with Lagrangian frames \begin{eqnarray}\label{Z-0}Z_0=(I_n,0_n)^T,\quad Z_1=(C(\theta),S(\theta))^T,\nonumber\end{eqnarray}  where for $-\pi/2<\theta_j\leq \pi/2$,
\begin{eqnarray}\label{Note:CS}
C(\theta)=diag(\cos(\theta_1),\cdots,\cos(\theta_n)), \quad  S(\theta)=diag(\sin(\theta_1),\cdots,\sin(\theta_n)).\nonumber
\end{eqnarray}
Then $A$ is a self-adjoint
operator on $\cH$ with domain $E(V_0,V_1)$; moreover,  $A$ has
compact resolvent and only has point spectrum.  Easy computation shows
$$
\sigma_p\big(A)=\{\lambda_{j,k}\,|\, \lambda_{j,k}=\theta_j/T+k\pi/T, j=1,2\cdots,n, \,k\in \mathbb Z\},
$$
with the corresponding eigenvectors $e_{j,k}=e^{\lambda_{j,k}Jt}e_j=\cos(\lambda_{j,k}t)e_j+\sin(\lambda_{j,k}t)e_{n+j}$, where $e_j$ is the standard $j$-th basis of $\mathbb{R}^{2n}$.  In what follows,
let $P_N$ be the projections from $\cH$ to $\span\{e_{j,k};\, |k|\leq N\}$.

\begin{prop}\label{cor:trace}
(i) For any $\nu\in\mathbb{C}$ such that $A-\nu$ is invertible,  $\cF(\nu,D)\in\cal{J}(P_N)$  and  \bea  Tr \cF(\nu,D)=\sum_{j=1}^n \frac{\cos (\theta_j-T\nu)}{\sin
(\theta_j-T\nu)}\int_0^T(e^{-\nu Jt} De^{\nu Jt}e_j,e_j)dt +\sum_{j=1}^n\int_0^T(e^{-\nu Jt}D e^{\nu Jt} e_{n+j},e_j)dt. \label{tr.1}\nonumber \eea \\
(ii) $\cF(B,D)\in\cal{J}(P_N)$ and hence the conditional Fredholm determinant $\det (I-\cF(B,D))$ is well defined.
\end{prop}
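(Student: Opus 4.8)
The plan is to treat part (i)---the explicit evaluation of the conditional trace---as the computational core, and to deduce part (ii) from it by a short resolvent argument. For (i) the starting point is that the eigenvectors $e_{j,k}$ are mutually orthogonal with $\|e_{j,k}\|^2=T$, so $\{e_{j,k}/\sqrt T\}$ is an orthonormal eigenbasis of $A$ in which $(A-\nu)^{-1}$ is diagonal, with eigenvalue $(\lambda_{j,k}-\nu)^{-1}$. Since $P_N$ commutes with $(A-\nu)^{-1}$ and fixes each $e_{j,k}$ with $|k|\le N$, this gives for every $N$
$$\tr\big(P_N\cF(\nu,D)P_N\big)=\frac1T\sum_{j=1}^n\sum_{|k|\le N}\frac{1}{\lambda_{j,k}-\nu}\int_0^T\langle e^{-\lambda_{j,k}Jt}D(t)e^{\lambda_{j,k}Jt}e_j,e_j\rangle\,dt.$$
Establishing (i) then amounts to showing these symmetric partial sums converge as $N\to\infty$ and computing the limit.

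First I would split the inner integrand, writing $\langle e^{-\lambda Jt}D(t)e^{\lambda Jt}e_j,e_j\rangle$ as a mean part constant in $t$ plus an oscillating part carrying the frequencies $2\lambda_{j,k}$. Paired with $\sum_{|k|\le N}(\lambda_{j,k}-\nu)^{-1}$, the constant part is summed by the cotangent identity $\sum_{k\in\dbZ}(k+c)^{-1}=\pi\cot(\pi c)$ with $c=(\theta_j-T\nu)/\pi$, producing the factor $T\cot(\theta_j-T\nu)$; this is the only genuinely conditionally convergent piece, and it converges precisely because of the symmetric truncation $|k|\le N$ built into $P_N$. For the oscillating part the relevant integrals are Fourier coefficients of continuous functions, hence lie in $\ell^2$ in $k$, while $(\lambda_{j,k}-\nu)^{-1}\in\ell^2$; by Cauchy--Schwarz the oscillating series converges absolutely. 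Together these show the limit exists, i.e. $\cF(\nu,D)\in\mathcal{J}(P_N)$, the first assertion of (i).

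To obtain the closed form I would evaluate the oscillating sum by the Fourier-kernel identity $\sum_{k\in\dbZ}e^{ik\tau}(k+c)^{-1}=\pi e^{i(\pi-\tau)c}/\sin(\pi c)$ for $0<\tau<2\pi$ (equivalently, by Parseval against this explicit cotangent kernel). Writing $\phi=\theta_j-T\nu$ and using $e^{i\phi}/\sin\phi=\cot\phi+i$, the frequency shift $2(\lambda_{j,k}-\nu)t=2\phi t/T+2k\pi t/T$ is exactly what the kernel resolves, so the conjugation $e^{\pm\lambda_{j,k}Jt}$ collapses to $e^{\pm\nu Jt}$. A short trigonometric rearrangement then merges the mean and oscillating contributions of the $j$-th term into
$$\cot(\theta_j-T\nu)\int_0^T\langle e^{-\nu Jt}De^{\nu Jt}e_j,e_j\rangle\,dt+\int_0^T\langle e^{-\nu Jt}De^{\nu Jt}e_{n+j},e_j\rangle\,dt,$$
and summing over $j$ yields the stated formula. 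For $\nu\in\dbC$ I would either carry the (complex-valued) summation identities throughout or, more cleanly, prove the identity for real $\nu\in\rho(A)$ and extend by analytic continuation, both sides being analytic on $\rho(A)$.

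For part (ii), fix any $\nu$ with $A-\nu$ invertible. The resolvent identity gives $\cF(B,D)-\cF(\nu,D)=D(A-B)^{-1}(B-\nu)(A-\nu)^{-1}$. Since $(A-B)^{-1}$ and $(A-\nu)^{-1}$ are Hilbert--Schmidt and $D$, $B-\nu$ are bounded, the right-hand side is a product of two Hilbert--Schmidt operators with bounded factors, hence trace class, so it lies in $\mathcal{J}_1\subset\mathcal{J}(P_N)$. As $\mathcal{J}(P_N)$ is a linear space and $\cF(\nu,D)\in\mathcal{J}(P_N)$ by (i), we conclude $\cF(B,D)\in\mathcal{J}(P_N)$, whence $\det(I-\cF(B,D))$ is well defined by (\ref{eq1.5a}). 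The main obstacle throughout is the conditional summation in (i): both the existence of the symmetric limit and its exact value hinge on the precise $|k|\le N$ truncation in $P_N$, and the technically delicate step is matching the $e^{\pm\lambda_{j,k}Jt}$ conjugation to $e^{\pm\nu Jt}$ via the cotangent/Fourier kernel.
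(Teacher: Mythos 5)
Your proof is correct and follows the same overall architecture as the paper's: isolate the part of $\langle e^{-\lambda_{j,k}Jt}D(t)e^{\lambda_{j,k}Jt}e_j,e_j\rangle$ that does not depend on $k$ and sum it against $\sum_{|k|\le N}(\lambda_{j,k}-\nu)^{-1}$ by the cotangent identity, treat the frequency-$2\lambda_{j,k}$ remainder separately, and deduce (ii) from (i) by observing that $\cF(B,D)-\cF(\nu,D)$ is trace class (the paper factors it as $\cF(\nu,D)(\nu-B)(A-B)^{-1}$, you as $D(A-B)^{-1}(B-\nu)(A-\nu)^{-1}$; both land in $\mathcal{J}_1\subset\mathcal{J}(P_N)$). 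The genuine difference is in how the oscillating part is handled. The paper makes your ``mean plus oscillating'' split explicit through $D=\hat D+\check D$ with $\hat D=\tfrac12(D-JDJ)$, $\check D=\tfrac12(D+JDJ)$, so that $e^{-\lambda Jt}De^{\lambda Jt}=\hat D(t)+\check D(t)e^{2\lambda Jt}$ (note your ``mean part'' $\hat D(t)$ is $k$-independent but not constant in $t$, which is all that matters), and then evaluates $\int_0^T f(t)\sum_k\frac{\cos(2(\lambda_{j,k}-\nu)t)}{\lambda_{j,k}-\nu}\,dt$ and its sine analogue by integration by parts followed by Fej\'er's theorem for the resulting Dirichlet-type kernels. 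You instead obtain convergence of the oscillating series for free from Cauchy--Schwarz ($\ell^2$ Fourier coefficients of a continuous function paired with the $\ell^2$ sequence $(\lambda_{j,k}-\nu)^{-1}$) and evaluate it by Parseval against the explicit bounded kernel $\sum_k e^{ik\tau}(k+c)^{-1}=\pi e^{i(\pi-\tau)c}/\sin(\pi c)$, which collapses $\sum_k e^{2(\lambda_{j,k}-\nu)Jt}/(\lambda_{j,k}-\nu)$ to the $t$-independent matrix $T e^{(\theta_j-T\nu)J}/\sin(\theta_j-T\nu)$ and reproduces both of the paper's summation identities in one stroke. Both routes are rigorous and give the same formula; yours is somewhat more economical and cleanly separates the genuinely conditional piece (the cotangent sum, which needs the symmetric truncation $|k|\le N$) from the absolutely convergent piece, whereas the paper's integration-by-parts argument is more elementary but requires the careful Fej\'er-kernel bookkeeping of its Lemma on the cosine and sine sums.
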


Since $\cF(\nu,D)-\cF(B,D)=\cF(\nu,D)(\nu-B)(A-B)^{-1}\in\cal{J}_1$, part ii) follows from part i) easily. The remaining part of this subsection is devote to the proof of part i), which is technical. Readers may skip it on the first glance.

Set
$\hat{D}=\frac{1}{2}(D-JDJ)$, $\check{D}=\frac{1}{2}(D+JDJ)$, then
obviously $D=\hat{D}+\check{D}$, and $ \hat{D}J=J\hat{D},\,\
\check{D}J=-J\check{D}$.  Therefore \bea  e^{Jt}\hat{D}=\hat{D}e^{Jt}, \quad e^{Jt}\check{D}=\check{D}e^{-Jt}. \label{ejd}  \nonumber\eea
Obviously,
\bea   Tr  \cF(\nu,D)= Tr  \cF(\nu,\hat{D}) +Tr \cF(\nu, \check{D}),\nonumber\eea
and $Tr \cF(\nu,\hat{D})$  and $Tr \cF(\nu,\check{D})$ will be computed  separately.
Throughout  the paper, in order to simplify the notations, the summation  $\sum_{k}$ always means $\lim_{N\rightarrow\infty}\sum_{|k|\leq N} $.
\begin{lem}\label{lem2.2} For any $\nu\in\mathbb{C}$ such that $A-\nu$ is invertible,  \bea Tr \cF(\nu,\hat{D})=\sum_{j=1}^n \frac{\cos (\theta_j-T\nu)}{\sin
(\theta_j-T\nu)}\int_0^T(\hat{D}e_j,e_j)dt , \label{p2.2f1}\nonumber\eea
\end{lem}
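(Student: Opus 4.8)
The plan is to compute the conditional trace directly in the orthonormal eigenbasis of $A$ exhibited just before the lemma. First I would normalize the eigenvectors: since $e_{j,k}(t)=\cos(\lambda_{j,k}t)e_j+\sin(\lambda_{j,k}t)e_{n+j}$, one gets $\|e_{j,k}\|^2=\int_0^T 1\,dt=T$, and $\langle e_{j,k},e_{j',k'}\rangle=0$ for $(j,k)\neq(j',k')$ (the cases $j\neq j'$ are immediate since the vectors lie in orthogonal coordinate planes, and for $j=j'$ the cross term integrates $\cos((k-k')\pi t/T)$ to zero over $[0,T]$). Hence $\{e_{j,k}/\sqrt T\}$ is an orthonormal basis, each $e_{j,k}$ is an eigenvector of $A$ with the \emph{real} eigenvalue $\lambda_{j,k}$, and $(A-\nu)^{-1}e_{j,k}=(\lambda_{j,k}-\nu)^{-1}e_{j,k}$. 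Since $\cF(\nu,\hat D)=\hat D(A-\nu)^{-1}$ is Hilbert--Schmidt and $P_N$ projects onto $\span\{e_{j,k}:|k|\le N\}$, expanding $Tr(P_N\cF(\nu,\hat D)P_N)$ in this basis and using $P_Ne_{j,k}=e_{j,k}$ for $|k|\le N$ reduces everything to
\begin{eqnarray*}
Tr\big(P_N\cF(\nu,\hat D)P_N\big)=\frac{1}{T}\sum_{j=1}^n\sum_{|k|\le N}\frac{1}{\lambda_{j,k}-\nu}\,\langle \hat D e_{j,k},e_{j,k}\rangle .
\end{eqnarray*}

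The key structural step is to show that $\langle \hat D e_{j,k},e_{j,k}\rangle$ is independent of $k$. Here I would invoke the commutation relation $e^{Jt}\hat D=\hat D e^{Jt}$ recorded before the lemma, together with the fact that $e^{\lambda_{j,k}Jt}$ is orthogonal with $(e^{\lambda_{j,k}Jt})^T=e^{-\lambda_{j,k}Jt}$. Writing $e_{j,k}(t)=e^{\lambda_{j,k}Jt}e_j$ and commuting $\hat D(t)$ past $e^{\lambda_{j,k}Jt}$ collapses the integrand to $(\hat D(t)e_j,e_j)$, giving
\begin{eqnarray*}
\langle \hat D e_{j,k},e_{j,k}\rangle=\int_0^T(\hat D(t)e_j,e_j)\,dt
\end{eqnarray*}
for every $k$. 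Substituting this and $\lambda_{j,k}-\nu=(\theta_j-T\nu+k\pi)/T$ turns the trace into $\sum_{j=1}^n\big(\int_0^T(\hat D e_j,e_j)\,dt\big)\sum_{|k|\le N}(\theta_j-T\nu+k\pi)^{-1}$.

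Finally I would pass to the limit $N\to\infty$ using the partial-fraction expansion of the cotangent, $\sum_{k\in\mathbb Z}(\alpha+k\pi)^{-1}=\cot\alpha$ read as the symmetric limit $\lim_N\sum_{|k|\le N}$, with $\alpha=\theta_j-T\nu\notin\pi\mathbb Z$ guaranteed by the invertibility of $A-\nu$. This produces the claimed formula and at the same time shows the limit exists, i.e. $\cF(\nu,\hat D)\in\cal{J}(P_N)$. I expect the main subtlety to be exactly this last point: the series $\sum_k(\alpha+k\pi)^{-1}$ is only \emph{conditionally} convergent (its inner terms decay like $1/k$), which reflects that $\cF(\nu,\hat D)$ is Hilbert--Schmidt but not trace class; it is the symmetric cutoff built into $P_N$ --- the trace finite condition --- that selects the cotangent value and makes the conditional trace well defined. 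The identity $\hat DJ=J\hat D$ is precisely what makes the $k$-dependence drop out cleanly; without it the inner products would retain oscillating factors and the symmetric sum would resist closed-form evaluation.
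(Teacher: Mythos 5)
Your proposal is correct and follows essentially the same route as the paper: expand the conditional trace in the eigenbasis $\{e_{j,k}\}$, use the commutation $e^{Jt}\hat D=\hat D e^{Jt}$ (equivalently $\hat DJ=J\hat D$) to see that $\langle\hat De_{j,k},e_{j,k}\rangle=\int_0^T(\hat De_j,e_j)\,dt$ is independent of $k$, and evaluate the symmetric partial sums $\lim_{N}\sum_{|k|\le N}(\theta_j/T+k\pi/T-\nu)^{-1}$ by the cotangent expansion (the paper cites this as Lemma 2.7 of \cite{HW1}). Your write-up simply supplies more detail on the orthonormality of the eigenbasis and on why the conditional (symmetric-cutoff) summation is the right notion here.
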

\begin{proof}
By the definition \bea
Tr \cF(\nu,\hat{D})&=&\frac{1}{T}\sum_{j=1}^n \sum_{k}\langle\hat{D}e^{\lambda_{j,k}Jt}e_j,e^{\lambda_{j,k}Jt}e_j\rangle\frac{1}{\lambda_{j,k}-\nu}
\nonumber \\ &=&\frac{1}{T}\sum_{j=1}^n \int_0^T\langle\hat{D}e_j,e_j\rangle dt \sum_{k}\frac{1}{k\pi/T+\theta_j/T-\nu} \nonumber \\ &=& \sum_{j=1}^n \frac{\cos (\theta_j-T\nu)}{\sin
(\theta_j-T\nu)}\int_0^T(\hat{D}e_j,e_j)dt ,\nonumber\eea where the third
equality is from the the following identity \cite[Lemma 2.7]{HW1}
$$\lim_{N\rightarrow\infty}\sum_{|k|\leq
N}\frac{1}{\nu+k\pi/T}=T\frac{\cos (T\nu)}{\sin
(T\nu)}.$$
\end{proof}

The following lemma is needed to compute $Tr \cF(\nu,\check{D}) $.
\begin{lem}\label{lem2.3} For $f\in C[0,T]$, we have   \bea \int_0^Tf(t)\sum_k \frac{\cos 2(k\pi/T+\theta_j/T-\nu)t}{k\pi/T+\theta_j/T-\nu}dt= T\int_0^Tf(t)dt\frac{\cos (\theta_j-\nu T)}{\sin (\theta_j-\nu T)},
\label{f2.1}\eea
\bea \int_0^Tf(t)\sum_k \frac{\sin 2(k\pi/T+\theta_j/T-\nu)t}{k\pi/T+\theta_j/T-\nu}= T\int_0^Tf(t)dt. \label{f2.2}\eea

\end{lem}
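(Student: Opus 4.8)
The plan is to recognize that all the content of the lemma lies in the \emph{pointwise} evaluation of the two conditional sums, after which the integral identities are immediate. Writing $\beta:=\theta_j-T\nu$ and $\alpha_k:=(k\pi+\beta)/T$, I would show that for each fixed $t\in(0,T)$ the symmetric limits satisfy
\begin{eqnarray}
\sum_k \frac{\cos 2\alpha_k t}{\alpha_k} = T\,\frac{\cos\beta}{\sin\beta}, \qquad
\sum_k \frac{\sin 2\alpha_k t}{\alpha_k} = T. \nonumber
\end{eqnarray}
Both sums are thus \emph{constant} in $t$ off the null set $\{0,T\}$, so the integrand $f(t)\,(\cdots)$ in the statement is just $f(t)$ times a constant, and pulling the constant out of $\int_0^T f(t)\,(\cdots)\,dt$ produces exactly $T\tfrac{\cos\beta}{\sin\beta}\int_0^T f$ and $T\int_0^T f$. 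Note that this reading uses the sum \emph{inside} the integral (as the paper's convention dictates), so no interchange of summation and integration is needed: once the inner sum is known to be constant, the integration is trivial. Hence the whole task reduces to summing $\sum_k \alpha_k^{-1}\cos 2\alpha_k t$ and $\sum_k \alpha_k^{-1}\sin 2\alpha_k t$.

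To evaluate these I would pass to complex exponentials. Since $\cos 2\alpha_k t=\tfrac12(e^{2i\alpha_k t}+e^{-2i\alpha_k t})$ and $\sin 2\alpha_k t=\tfrac1{2i}(e^{2i\alpha_k t}-e^{-2i\alpha_k t})$, it suffices to compute $E^{\pm}(t):=\sum_k \alpha_k^{-1}e^{\pm 2i\alpha_k t}$. Using $\alpha_k=(k\pi+\beta)/T$ one factors $e^{\pm 2i\alpha_k t}=e^{\pm 2i\beta t/T}e^{\pm iks}$ with $s:=2\pi t/T\in(0,2\pi)$, so that
\begin{eqnarray}
E^{+}(t)=T\,e^{2i\beta t/T}\sum_k \frac{e^{iks}}{k\pi+\beta}. \nonumber
\end{eqnarray}
The inner series is the classical Fourier/Mittag--Leffler expansion — the $t$-dependent refinement of the identity $\sum_{|k|\le N}(\nu+k\pi/T)^{-1}\to T\cos(T\nu)/\sin(T\nu)$ quoted as \cite[Lemma 2.7]{HW1} — and equals $e^{i\beta(1-s/\pi)}/\sin\beta$ for $s\in(0,2\pi)$. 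Substituting, the $t$-dependent exponentials cancel and one finds $E^{\pm}(t)=T\,e^{\pm i\beta}/\sin\beta$, constant in $t$. Taking the even and odd combinations $\tfrac12(E^{+}+E^{-})$ and $\tfrac1{2i}(E^{+}-E^{-})$ and using $\tfrac12(e^{i\beta}+e^{-i\beta})=\cos\beta$, $\tfrac1{2i}(e^{i\beta}-e^{-i\beta})=\sin\beta$ gives the two pointwise values above.

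The delicate point, and where I expect the real work, is convergence. The series are only conditionally convergent: the terms are $O(1/|k|)$ with oscillating phase $e^{iks}$, so the symmetric limit must be controlled by a Dirichlet-test / Dirichlet--Jordan argument, which shows that for $t\in(0,T)$ (equivalently $s\in(0,2\pi)$) the partial sums converge pointwise to the stated constant; the endpoints $t\in\{0,T\}$, where the periodic extension jumps, form a null set and do not affect the integral. One must also allow $\nu\in\mathbb{C}$, so that $\beta$ is complex: the factorization is unaffected because $s$ stays real, and the closed form $e^{i\beta(1-s/\pi)}/\sin\beta$ persists by analytic continuation in $\beta$. Finally, the standing hypothesis that $A-\nu$ be invertible is precisely what yields $\beta\notin\pi\mathbb{Z}$, i.e. $\sin\beta\neq 0$, so that every denominator $\alpha_k$ is nonzero and the right-hand sides are well defined.
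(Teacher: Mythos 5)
Your proof is correct in substance but takes a genuinely different route from the paper's. The paper integrates by parts against $F(t)=\int_0^t f$, which splits the problem into (a) a boundary term $F(T)\sum_k\alpha_k^{-1}\cos(2\alpha_kT)$, handled by the cotangent identity of \cite[Lemma 2.7]{HW1}, and (b) interior terms $\int_0^T F(t)(\cdots)\sum_k\cos(2k\pi t/T)\,dt$, evaluated by Fej\'er's theorem (the symmetric sums of $\sum_k e^{2ik\pi t/T}$ acting as an approximate identity concentrated at $t=0,T$). You instead evaluate the inner conditionally convergent sum in closed form: identifying $\sum_k e^{iks}/(k\pi+\beta)$ as the Fourier series of $e^{i\beta(1-s/\pi)}/\sin\beta$ on $(0,2\pi)$, you find that both trigonometric sums are \emph{constant} on $(0,T)$, equal to $T\cos\beta/\sin\beta$ and $T$ respectively, after which the lemma is immediate. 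I checked the pointwise evaluation (e.g.\ at $\beta=\pi/2$, $T=\pi$ the sine sum reduces to the square-wave series and indeed gives $T$); it is right, and your approach is arguably cleaner and more informative, since it explains why the answer is a constant times $\int_0^T f$. Your remarks that the endpoint values (which do differ from the interior constant for the sine sum) form a null set, and that invertibility of $A-\nu$ is exactly $\sin\beta\neq0$, are both on point.

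The one soft spot is your claim that no interchange of $\sum_k$ and $\int_0^T$ is needed. The lemma is applied in the computation of $Tr\,\cF(\nu,\check{D})$, where the conditional trace is by definition $\lim_{N}\sum_{|k|\le N}\int_0^T(\cdots)\,dt$ --- the sum sits \emph{outside} the integral --- so the operative statement is $\lim_N\sum_{|k|\le N}\int_0^T f(t)\,\alpha_k^{-1}\cos(2\alpha_kt)\,dt=T\frac{\cos\beta}{\sin\beta}\int_0^Tf$, and the interchange cannot be dodged by a choice of reading. It does hold: the symmetric partial sums $\sum_{|k|\le N}e^{iks}/(k\pi+\beta)$ are the Fourier partial sums of a bounded-variation (piecewise smooth) function and are therefore uniformly bounded in $N$ and $s$, so dominated convergence lets you pass the limit through the integral. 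This is exactly the Dirichlet--Jordan machinery you already invoke for pointwise convergence; you should state the uniform boundedness and the resulting interchange explicitly rather than declare them unnecessary. With that one added sentence the argument is complete.
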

\begin{proof}
For $f\in C[0,T]$, set $F(t)=\int_0^tf(s)ds$. Notice that $$\sum\limits_{|k|\leq N}\frac{\cos
(2(\lambda_{j,k}-\nu)t)}{\lambda_{j,k}-\nu}=\frac{\cos
(2({\theta_j/T}-\nu)t)}{\theta_j/T-\nu}+\sum\limits_{k=1}^N\(\frac{\cos
(2(\lambda_{j,k}-\nu)t)}{\lambda_{j,k}-\nu}+\frac{\cos
(2(\lambda_{j,-k}-\nu)t)}{\lambda_{j,-k}-\nu}\),$$ which converges uniformly. By integration by part, we have
\bea
&&\int_0^Tf\cdot\sum_k\frac{\cos
(2(\lambda_{j,k}-\nu)t)}{\lambda_{j,k}-\nu} dt\nonumber\\&=&
F(T)\sum_k\frac{\cos
(2(\lambda_{j,k}-\nu)T)}{\lambda_{j,k}-\nu}+2\int_0^TF(t)\sum_k\sin
(2(\lambda_{j,k}-\nu)t)dt. \label{eq:2.10} \eea
For the first part of the above equality
\bea \sum_k\frac{\cos(
2(\lambda_{j,k}-\nu)T)}{\lambda_{j,k}-\nu}&=&\sum_k\frac{\cos(2\theta_j-2\nu
T)}{k\pi/T+\theta_j/T-\nu } \nonumber \\ &=& T\cos(2\theta_j-2\nu
T)\frac{1+\cos(2\theta_j-2\nu
T)}{\sin(2\theta_j-2\nu T)}, \nonumber\eea
For the second part of \eqref{eq:2.10}
\bea
&&\int_0^TF(t)\sum_k\sin (2(\lambda_{j,k}-\nu)t)dt \nonumber
\\&=&\int_0^TF(t)\sin((2\theta_j/T-2\nu)t)\sum_k\cos
\frac{2k\pi t}{T}dt+\int_0^TF(t)\cos((2\theta_j/T-2\nu)t)\sum_k\sin
\frac{2k\pi t}{T}dt \nonumber \\ &=&2
\int_0^TF(t)\sin((2\theta_j/T-2\nu)t)\sum_{k\in\mathbb{N}}\cos
\frac{2k\pi t}{T}dt+ \int_0^TF(t)\sin((2\theta_j/T-2\nu)t)dt \nonumber            \\
&=&\frac{T}{2}F(T)\sin(2\theta_j-2\nu T),\nonumber            \\
&=& \frac{T}{2}\int_0^Tf(t)dt\cdot \sin(2\theta_j-2\nu T),\nonumber\eea where the
third equality is from the Fejer Theorem for Fourier series. Thus we
have
 \bea &&\int_0^Tf(t)\sum_k\frac{\cos
(2(\lambda_{j,k}-\nu)t)}{\lambda_{j,k}-\nu} dt\nonumber\\ &=&
T\int_0^Tf(t)dt\cdot[\cos(2\theta_j-2\nu T)\frac{1+\cos(2\theta_j-2\nu
T)}{\sin(2\theta_j-2\nu T)}+ \sin(2\theta_j-2\nu T)]\nonumber \\ &=&
T\int_0^Tf(t)dt\cdot\frac{1+\cos(2\theta_j-2\nu T)}{\sin(2\theta_j-2\nu
T)}.\nonumber \eea

Similarly \bea &&\int_0^Tf\cdot\sum_k\frac{\sin
(2(\lambda_{j,k}-\nu)t)}{\lambda_{j,k}-\nu} dt\nonumber\\ &=&
\int_0^Tf(t)dt\sum_k\frac{\sin
(2(\lambda_{j,k}-\nu)T)}{\lambda_{j,k}-\nu}-2\int_0^TF(t)\sum_k\cos(
2(\lambda_{j,k}-\nu)t)dt, \nonumber \eea
and
 \bea \sum_k\frac{\sin
(2(\lambda_{j,k}-\nu)T)}{\lambda_{j,k}-\nu}&=&\sum_k\frac{\sin(2\theta_j-2\nu
T)}{k\pi/T+\theta_j/T-\nu } \nonumber  = T(1+\cos(2\theta_j-2\nu
T)). \eea
As above, we have
\bea \int_0^TF(t)\sum_k\cos
(2(\lambda_{j,k}-\nu)t)dt&=&\int_0^TF(t)\cos((2\theta_j/T-2\nu)t)\sum_k\cos
\frac{2k\pi t}{T}dt\nonumber \\
&&-\int_0^TF(t)\sin((2\theta_j/T-2\nu)t)\sum_k\sin \frac{2k\pi
t}{T}dt \nonumber
\\ &=& \frac{T}{2}(F(0)+F(T)\cos(2\theta_j-2\nu T)) \nonumber
\\ &=& \frac{T}{2}\int_0^Tf(t)dt\cos(2\theta_j-2\nu T). \nonumber\eea
We have \bea \int_0^Tf\cdot\sum_k\frac{\sin
(2(\lambda_{j,k}-\nu)t)}{\lambda_{j,k}-\nu} dt =T\int_0^Tf(t) dt.\nonumber
\eea

\end{proof}

\begin{lem}\label{lem2.3} For any $\nu\in\mathbb{C}$ such that $A-\nu$ is invertible,  then $\cF(\nu,\check{D})\in\cal{J}(P_N)$  and the conditional trace
 \bea &&Tr \cF(\nu,\check{D})= \sum_{j=1}^n \frac{\cos (\theta_j-T\nu)}{\sin
(\theta_j-T\nu)}\int_0^T(e^{-\nu Jt}\check{D}e^{-\nu Jt}e_j,e_j)dt +\sum_{j=1}^n\int_0^T(e^{-\nu Jt}\check{D}e^{\nu Jt}e_{n+j},e_j)dt.
\label{le2.4.1}\nonumber\eea
\end{lem}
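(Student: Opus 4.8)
The plan is to compute the conditional trace directly in the spectral basis $\{e_{j,k}\}$ of $A$, just as in the proof of Lemma \ref{lem2.2}, the only new feature being that $\check{D}$ anticommutes with $J$ rather than commuting with it. Since $e_{j,k}$ is an eigenvector of $A$ with eigenvalue $\lambda_{j,k}$, it is an eigenvector of $(A-\nu)^{-1}$ with eigenvalue $(\lambda_{j,k}-\nu)^{-1}$, so the finite‑rank operator $P_N\cF(\nu,\check{D})P_N$ has diagonal matrix elements
\[
Tr\big(P_N\cF(\nu,\check{D})P_N\big)=\frac1T\sum_{j=1}^n\sum_{|k|\le N}\frac{1}{\lambda_{j,k}-\nu}\langle\check{D}e_{j,k},e_{j,k}\rangle.
\]
Thus the entire statement — both the membership $\cF(\nu,\check{D})\in\mathcal{J}(P_N)$ and the value of the trace — reduces to showing that the limit $N\to\infty$ of this sum exists and equals the claimed expression.

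First I would reduce $\langle\check{D}e_{j,k},e_{j,k}\rangle$ to an integral carrying explicit oscillatory factors. Writing $e_{j,k}=e^{\lambda_{j,k}Jt}e_j$ and using $(e^{\lambda_{j,k}Jt})^T=e^{-\lambda_{j,k}Jt}$ together with the anticommutation relation $\check{D}e^{sJt}=e^{-sJt}\check{D}$, one finds $e^{-\lambda_{j,k}Jt}\check{D}e^{\lambda_{j,k}Jt}=e^{-2\lambda_{j,k}Jt}\check{D}$, hence
\[
\langle\check{D}e_{j,k},e_{j,k}\rangle=\int_0^T\langle e^{-2\lambda_{j,k}Jt}\check{D}e_j,e_j\rangle\,dt.
\]
Expanding $e^{-2\lambda_{j,k}Jt}=\cos(2\lambda_{j,k}t)I-\sin(2\lambda_{j,k}t)J$ now produces genuine oscillation in $\cos(2\lambda_{j,k}t)$ and $\sin(2\lambda_{j,k}t)$; this is exactly where the computation departs from the $\hat{D}$ case of Lemma \ref{lem2.2}, in which the corresponding factor collapses to $\hat{D}$ and no oscillation survives, leaving only the cotangent sum.

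Next I would peel off the $\nu$–dependence so as to match the two series of the preceding lemma. By the addition formulas, $\cos(2\lambda_{j,k}t)$ and $\sin(2\lambda_{j,k}t)$ rewrite as combinations of $\cos\big(2(\lambda_{j,k}-\nu)t\big)$ and $\sin\big(2(\lambda_{j,k}-\nu)t\big)$ with coefficients $g_j(t),h_j(t)$ that depend on $t$ and $\nu$ but not on $k$; repackaging these via the commutation relation identifies them as $\langle e^{-\nu Jt}\check{D}e^{\nu Jt}e_j,e_j\rangle$ and $\langle e^{-\nu Jt}\check{D}e^{\nu Jt}e_{n+j},e_j\rangle$ (note $e^{-\nu Jt}\check{D}e^{\nu Jt}=e^{-2\nu Jt}\check{D}$ is symmetric, which matches the $e_{n+j}$–integrand of the statement). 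Interchanging $\sum_{|k|\le N}$ with $\int_0^T$ then leaves precisely the two sums $\sum_k\frac{\cos 2(\lambda_{j,k}-\nu)t}{\lambda_{j,k}-\nu}$ and $\sum_k\frac{\sin 2(\lambda_{j,k}-\nu)t}{\lambda_{j,k}-\nu}$ integrated against continuous functions, and I would finish by invoking the identities \eqref{f2.1} and \eqref{f2.2}: the first supplies the cotangent factor $\cos(\theta_j-T\nu)/\sin(\theta_j-T\nu)$, the second contributes with coefficient $1$, and collecting the two terms gives the asserted formula.

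The main obstacle is precisely the convergence, which is also what the membership $\cF(\nu,\check{D})\in\mathcal{J}(P_N)$ asserts: the bare series $\sum_k(\lambda_{j,k}-\nu)^{-1}$ is only conditionally (symmetrically) convergent, so existence of $\lim_N Tr(P_N\cF P_N)$ is not automatic and must be extracted from the oscillatory numerators. This is exactly the content built into the preceding lemma, whose proof legitimizes the term‑by‑term integration through uniform convergence of the symmetrized partial sums together with Fej\'er's theorem. The one delicate point is that the interchange of $\sum_{|k|\le N}$ and $\int_0^T$ must be performed throughout with the \emph{symmetric} cutoff $|k|\le N$, since neither the cosine series nor the sine series converges absolutely; once \eqref{f2.1}–\eqref{f2.2} are applied in this symmetric sense, both the trace‑finiteness and the closed form follow simultaneously.
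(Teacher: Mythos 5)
Your proposal follows essentially the same route as the paper's proof: expand the trace in the eigenbasis $\{e_{j,k}\}$, use the anticommutation $\check{D}e^{sJt}=e^{-sJt}\check{D}$ to turn $\langle\check{D}e_{j,k},e_{j,k}\rangle$ into an oscillatory integral, split it into the $\cos 2(\lambda_{j,k}-\nu)t$ and $\sin 2(\lambda_{j,k}-\nu)t$ pieces paired with $(e^{-2\nu Jt}\check{D}e_j,e_j)$ and $(e^{-2\nu Jt}\check{D}e_{n+j},e_j)$, and evaluate the two conditionally convergent sums by the identities \eqref{f2.1} and \eqref{f2.2}. This is correct, and your explicit attention to the symmetric cutoff is exactly the point the paper delegates to the preceding lemma.
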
 
\begin{proof}
By the definition \bea
Tr \cF(\nu,\check{D})&=&\frac{1}{T}\sum_{j=1}^n\sum_{k}\langle\check{D}e^{\lambda_{j,k}Jt}e_j,e^{\lambda_{j,k}Jt}e_j\rangle\frac{1}{\lambda_{j,k}-\nu}
\nonumber \\ &=&\frac{1}{T}\sum_{j=1}^n\sum_{k}\int_0^T( e^{-2\nu Jt}\check{D}\frac{e^{2(k\pi/T+\theta_j/T-\nu)Jt}}{k\pi/T+\theta_j/T-\nu}    e_j,e_j)dt. \label{l2.3.1}
\eea
Please note that \bea\int_0^T( e^{-2\nu Jt}\check{D}\frac{e^{2(k\pi/T+\theta_j/T-\nu)Jt}}{k\pi/T+\theta_j/T-\nu}    e_j,e_j)dt&=&
  \int_0^T(e^{-2\nu Jt}\check{D}e_j,e_j)\frac{\cos [2(k\pi/T+\theta_j/T-\nu)t ]}{k\pi/T+\theta_j/T-\nu }dt\nonumber\\&&+\int_0^T(e^{-2\nu Jt}\check{D}e_{n+j},e_j) \frac{\sin [2(k\pi/T+\theta_j/T-\nu)t ]}{k\pi/T+\theta_j/T-\nu}dt.   \nonumber\eea
By (\ref{f2.1}-\ref{f2.2}) and $e^{-\nu Jt}\check{D}=\check{D} e^{Jt}$,  the right of (\ref{l2.3.1}) equals to
\bea \sum_{j=1}^n \frac{\cos (\theta_j-T\nu)}{\sin
(\theta_j-T\nu)}\int_0^T(e^{-2\nu Jt}\check{D}e_j,e_j)dt +\sum_{j=1}^n\int_0^T(e^{-\nu Jt}\check{D}e^{\nu Jt}e_{n+j},e_j)dt,  \nonumber\eea
this is end of the proof.
\end{proof}

Please note that  $\sum_{j=1}^n\int_0^T(e^{-\nu Jt}\hat{D} e^{\nu Jt} e_{n+j},e_j)dt=0$.    Part i) of Proposition \ref{cor:trace} comes from Lemma \ref{lem2.2} and Lemma \ref{lem2.3} directly.

\section{Hill-type formula for Hamiltonian systems with  Lagrangian boundary conditions }

This section is the main part of our paper,  we build up the Hill-type formula in Subsection \ref{sec3.1}, and the trace formula is obtained in Subsection \ref{sec3.2}.  At last, we discuss the relationship between the the eigenvalue problem for Hamiltonian system  with  that of  Sturm-Liouville systems in Section \ref{sec3.3}. 
\subsection{Hill-type formula}\label{sec3.1}
Let $\gamma_\alpha$ be the fundamental solution of $B+\alpha D$, that is
\bea  \dot{\gamma}_\alpha(t)=J(B(t)+\alpha D(t))\gamma_\alpha(t), \quad \gamma(0)=I_{2n}.\nonumber  \eea
Assume $A-B$ is nondegenerate which is obvious equivalent to $V_0\pitchfork \gamma^{-1}_0(T)V_1$.
We will express $\cF(B,D)$ by  integral operator.   Let $Q$ be the unique  idempotent  matrix on $\mathbb{R}^{2n}$ with kernel  $V_0$ and image $\gamma_0^{-1}(T)V_1$, in general  $Q$ is not orthogonal.  The  integral kernel of  $\cF(B,D)$
could be given by
\bea \cal{ K}_\cF (t,t')=\left\{\begin{array}{cc} JD\gamma_0(t)Q\gamma^{-1}_0(t'), \quad 0\leq t'<t;
\\ -JD\gamma_0(t)(I_{2n}-Q)\gamma^{-1}_0(t'), \quad t\leq t'<T.   \end{array}\right.\nonumber\eea
That means $$\cF(B,D) f(t)=\int_0^T \cal{K}_{\cF}(t,t^\prime)f(t')dt'.$$
In general, the kernel $\cal{K}_\cF$ is not continuous but in $L^2$. However,  we may  define the trace formally from the viewpoint of integral operators
\begin{defi}\label{Defi-Trace} For $\cal {K}_{\cF}$, we define
\bea Tr \cal{K}_\cF =Tr\int_0^T JD\gamma_0(t)Q\gamma^{-1}_0(t)dt.  \nonumber \eea
\end{defi}

At first, we will show the following lemma.

\begin{lem}\label{lemma3.2}  For any $\beta\in[0,1]$,
\bea  Tr \cal{K}_\cF =Tr \int_0^T [\beta JD\gamma_0(t)Q\gamma^{-1}_0(t)-(1-\beta)JD\gamma_0(t)(I_{2n}-Q)\gamma^{-1}_0(t)]  dt. \nonumber  \eea
\end{lem}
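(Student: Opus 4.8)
The plan is to reduce the stated identity to a single scalar fact, namely that the trace of $\int_0^T JD(t)\,dt$ vanishes. To set this up, I would abbreviate the two matrix integrals appearing in the statement by
$$M=\int_0^T JD\gamma_0(t)Q\gamma^{-1}_0(t)\,dt,\qquad N=\int_0^T JD\gamma_0(t)(I_{2n}-Q)\gamma^{-1}_0(t)\,dt.$$
By Definition \ref{Defi-Trace} the left-hand side is exactly $Tr\,{\cal K}_\cF=Tr\,M$, while the right-hand side of the asserted formula is, by linearity of the trace, precisely $\beta\,Tr\,M-(1-\beta)\,Tr\,N$. Subtracting the two sides, the claimed identity is equivalent to $(1-\beta)\big(Tr\,M+Tr\,N\big)=0$, and since this is required for every $\beta\in[0,1]$ (in particular $\beta=0$), it holds for all $\beta$ if and only if $Tr\,M+Tr\,N=0$. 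Thus the whole lemma is reduced to showing that this single sum of traces vanishes.

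Next I would add the two integrands directly. Since $Q+(I_{2n}-Q)=I_{2n}$, the sum collapses and the fundamental solution cancels its inverse:
$$M+N=\int_0^T JD\gamma_0(t)\gamma^{-1}_0(t)\,dt=\int_0^T JD(t)\,dt.$$
Because $\gamma_0$ and $\gamma_0^{-1}$ are bounded matrix-valued functions and $D$ is continuous, $M$ and $N$ are genuine finite matrices, so all traces here are the ordinary matrix trace and $Tr\int=\int Tr$ is automatic. Hence it remains only to verify that $\int_0^T Tr\big(JD(t)\big)\,dt=0$, which will follow from the pointwise statement $Tr\big(JD(t)\big)=0$.

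Finally, the pointwise vanishing is the elementary observation that the trace of a product of a skew-symmetric matrix with a symmetric one is zero: using $J^T=-J$ and $D(t)^T=D(t)$,
$$Tr\big(JD(t)\big)=Tr\big((JD(t))^T\big)=Tr\big(D(t)^TJ^T\big)=-Tr\big(D(t)J\big)=-Tr\big(JD(t)\big),$$
so $Tr\big(JD(t)\big)=0$ for every $t$. Integrating over $[0,T]$ gives $Tr(M+N)=0$, which is exactly the reduction above, completing the proof. I do not expect any genuine obstacle: there is no analytic subtlety, and the only point requiring care is the bookkeeping that identifies the right-hand side with $\beta\,Tr\,M-(1-\beta)\,Tr\,N$ and exploits the $\beta$-independence, so that the trivial algebraic identity $Tr(JD)=0$ carries the entire argument.
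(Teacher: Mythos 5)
Your proof is correct and follows essentially the same route as the paper: both arguments rest on the observation that $Tr\int_0^T JD\,dt=0$ because $J$ is skew-symmetric and $D(t)$ is symmetric, which forces $Tr\int_0^T JD\gamma_0 Q\gamma_0^{-1}dt=-Tr\int_0^T JD\gamma_0(I_{2n}-Q)\gamma_0^{-1}dt$ and hence the $\beta$-independence of the stated convex combination. Your write-up just makes the bookkeeping more explicit than the paper's two-line version.
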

\begin{proof}
Since $D$ is a path of symmetric matrices, it follows that
$$
Tr\int_0^T -JDdt=0,
$$
Hence
$$
Tr\int_0^T JD \gamma_0(t)Q\gamma_0^{-1}(t)dt=-Tr\int_0^TJD\gamma_0(t)(I_{2n}-Q)\gamma_0^{-1}(t)dt.
$$
The proof is complete.
\end{proof}

To simply the notation, we let $$Q_d=\left(\begin{array}{cc}I_n & 0_n \\0_n  & 0_n \end{array}\right), Q_n=\left(\begin{array}{cc}0_n & 0_n \\0_n  & I_n \end{array}\right).     $$  In the following Lemma \ref{l3.3} and Corollary \ref{c3.5}, we will show that the conditional trace defined in Subsection 2.2 coincides with the formally defined trace in Definition \ref{Defi-Trace}.
\begin{lem}\label{l3.3} For the case $B=\nu I_{2n}$,
\bea  Tr \cF(\nu,D) = Tr \cal{K}_{\cF} . \nonumber  \eea
\end{lem}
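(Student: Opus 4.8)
The plan is to compute both sides of the asserted identity explicitly and verify they coincide, exploiting that the right-hand side $Tr\,\cF(\nu,D)$ is already known in closed form from Proposition \ref{cor:trace}(i). Since $B=\nu I_{2n}$, the fundamental solution is the rotation $\gamma_0(t)=e^{\nu Jt}$, which commutes with $J$; this commutation is what makes everything line up.

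The heart of the argument is to pin down the idempotent $Q$ explicitly. Because $V_0$ has frame $Z_0=(I_n,0_n)^T$, the kernel of $Q$ is $\span\{e_1,\dots,e_n\}$, so $Qe_j=0$ for $1\le j\le n$. For the image, I would apply the angle-addition identities in the form
\[
\gamma_0^{-1}(T)Z_1=e^{-\nu JT}\begin{pmatrix}C(\theta)\\ S(\theta)\end{pmatrix}=\begin{pmatrix}C(\theta-\nu T)\\ S(\theta-\nu T)\end{pmatrix},
\]
so that, writing $\phi_j:=\theta_j-\nu T$, the image $\gamma_0^{-1}(T)V_1$ is spanned by the vectors $\cos\phi_j\,e_j+\sin\phi_j\,e_{n+j}$. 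Since both $V_0$ and this image split along the coordinate planes $\span\{e_j,e_{n+j}\}$, the idempotent $Q$ is block-diagonal in these planes; in each block the projection onto $(\cos\phi_j,\sin\phi_j)$ along $(1,0)$ is $\left(\begin{smallmatrix}0&\cot\phi_j\\ 0&1\end{smallmatrix}\right)$. Equivalently, $Qe_j=0$ and $Qe_{n+j}=\cot\phi_j\,e_j+e_{n+j}$. The hypothesis that $A-\nu$ is invertible guarantees $\sin\phi_j\neq0$ (consistent with the spectrum $\sigma_p(A)$ computed before Proposition \ref{cor:trace}), so $Q$ is well defined.

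Next I would unwind Definition \ref{Defi-Trace} using cyclicity of the finite-dimensional trace together with the commutation $\gamma_0^{-1}(t)J=J\gamma_0^{-1}(t)$, to obtain
\[
Tr\,\cal{K}_\cF=\int_0^T Tr\big[J D\gamma_0(t)Q\gamma_0^{-1}(t)\big]\,dt=\int_0^T Tr\big[J\tilde{D}(t)Q\big]\,dt,\qquad \tilde{D}(t):=e^{-\nu Jt}D(t)e^{\nu Jt}.
\]
Evaluating the last trace against the standard basis and using $Qe_j=0$ together with $Je_j=e_{n+j}$, $Je_{n+j}=-e_j$ (whence $(J\tilde{D}e_j,e_{n+j})=(\tilde{D}e_j,e_j)$ and $(J\tilde{D}e_{n+j},e_{n+j})=(\tilde{D}e_{n+j},e_j)$), only the $e_{n+j}$ columns survive and
\[
Tr\big[J\tilde{D}Q\big]=\sum_{j=1}^n\Big[\cot\phi_j\,(\tilde{D}e_j,e_j)+(\tilde{D}e_{n+j},e_j)\Big].
\]
Integrating in $t$ reproduces precisely the formula for $Tr\,\cF(\nu,D)$ in Proposition \ref{cor:trace}(i), which completes the proof. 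The only genuinely delicate steps are the explicit determination of $Q$ (the rotation identity and the $2\times 2$ block computation) and the index bookkeeping in the final trace; everything else is routine cyclicity and the commutation of $J$ with $e^{\nu Jt}$.
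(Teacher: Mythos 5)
Your proposal is correct and follows essentially the same route as the paper: both compute the idempotent $Q$ explicitly (the paper writes it as $PQ_nP^{-1}=\left(\begin{smallmatrix}0_n&C(\theta-\nu T)S(\theta-\nu T)^{-1}\\ 0_n&I_n\end{smallmatrix}\right)$, which is exactly your blockwise $Qe_j=0$, $Qe_{n+j}=\cot\phi_j\,e_j+e_{n+j}$), then use cyclicity of the trace and the commutation of $J$ with $e^{\nu Jt}$ to reduce $Tr\,\mathcal{K}_\cF$ to the closed form of Proposition \ref{cor:trace}(i). The only difference is presentational: you carry out the trace evaluation basis vector by basis vector, whereas the paper states it directly in block-matrix form.
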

\begin{proof} In this case $B=\nu I_{2n}$, $\gamma_0(t)=e^{\nu Jt}$.  The frame of $\gamma_0^{-1}(T)V_1$ could be given by
$(C(\theta-\nu T ), S(\theta-\nu T))^T$. In what follows, we will set \begin{eqnarray}P:=(Z_0, \gamma^{-1}_0(T)Z_1 )=\left(\begin{array}{cc}I_n & C(\theta-\nu T) \\0_n  & S(\theta-\nu T) \end{array}\right).\nonumber  \end{eqnarray} Hence $$Q=PQ_n  P^{-1}=\left(\begin{array}{cc}0_n & C(\theta-\nu T)S(\theta-\nu T)^{-1}  \\0_n  & I_n \end{array}\right).$$
By definition, \bea Tr \cal{K}_\cF  &=& Tr \int_0^T JD\gamma_0(t)Q\gamma^{-1}_0(t)dt \nonumber \\
&=& Tr \int_0^T e^{-\nu Jt}De^{\nu Jt}QJ dt \nonumber \\  &=& \sum_{j=1}^n \frac{\cos (\theta-\nu T)}{\sin
(\theta-\nu T)}\int_0^T(e^{-\nu Jt} De^{\nu Jt}e_j,e_j)dt +\sum_{j=1}^n\int_0^T(e^{-\nu Jt}D e^{\nu Jt} e_{n+j},e_j)dt. \nonumber\eea
Combining with Corollary \ref{cor:trace}, we have the result.
\end{proof}

\begin{cor}\label{c3.5}  For any $B$ such that $A-B$ is invertible, we have
\bea Tr \cF(B,D)= Tr \cal{K}_\cF.  \label{cf3.1}\nonumber\eea\end{cor}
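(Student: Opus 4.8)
The plan is to reduce the general case to the constant case $B=\nu I_{2n}$ already settled in Lemma \ref{l3.3}, exploiting that $\cF(B,D)$ and $\cF(\nu,D)$ differ by a \emph{trace class} operator even though each is only Hilbert--Schmidt. Fix any $\nu\in\mathbb C$ with $A-\nu$ invertible. As recorded just before Proposition \ref{cor:trace}, the resolvent identity gives
$$\cF(\nu,D)-\cF(B,D)=\cF(\nu,D)(\nu-B)(A-B)^{-1},$$
and since $\cF(\nu,D)$ and $(A-B)^{-1}$ are Hilbert--Schmidt while $\nu-B$ is bounded, the right-hand side lies in $\cal{J}_1$. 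Both $\cF(B,D)$ and $\cF(\nu,D)$ belong to $\cal{J}(P_N)$ by Proposition \ref{cor:trace}(ii), so their conditional traces are defined; and because $P_N\to id$ strongly with each $P_N$ of finite rank, $\lim_{N\to\infty}Tr(P_N G P_N)=Tr\,G$ for every $G\in\cal{J}_1$. Applying this to $G=\cF(B,D)-\cF(\nu,D)$ yields
$$Tr\,\cF(B,D)-Tr\,\cF(\nu,D)=Tr\big(\cF(B,D)-\cF(\nu,D)\big),$$
where the right-hand side is now the \emph{ordinary} trace of a trace class operator.

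It remains to evaluate this ordinary trace as an integral, and here I would argue with the integral kernels. Write $\cal{K}_\cF$ for the kernel of $\cF(B,D)$ and $\cal{K}_\cF^{(\nu)}$ for that of $\cF(\nu,D)$ (the same expression with $\gamma_0(t)$ replaced by $e^{\nu Jt}$ and $Q$ by the idempotent associated with $B=\nu I_{2n}$). Each kernel is smooth off the diagonal but jumps across $t=t'$ by $JD(t)\gamma_0(t)\big[Q+(I_{2n}-Q)\big]\gamma_0^{-1}(t)=JD(t)$, which is \emph{independent of $B$}. Hence the two jumps cancel and $\cal{K}_\cF-\cal{K}_\cF^{(\nu)}$ extends to a continuous kernel on $[0,T]^2$. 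For a trace class integral operator with continuous kernel the trace is the integral of the diagonal values, so
$$Tr\big(\cF(B,D)-\cF(\nu,D)\big)=\int_0^T Tr\big[\cal{K}_\cF(t,t)-\cal{K}_\cF^{(\nu)}(t,t)\big]\,dt=Tr\,\cal{K}_\cF-Tr\,\cal{K}_\cF^{(\nu)},$$
the last equality being Definition \ref{Defi-Trace} applied to each kernel (the $t'<t$ branch may be used on both, since the difference is continuous across the diagonal). Combining the three displays with Lemma \ref{l3.3}, which states $Tr\,\cF(\nu,D)=Tr\,\cal{K}_\cF^{(\nu)}$, the $\nu$-terms cancel and leave $Tr\,\cF(B,D)=Tr\,\cal{K}_\cF$.

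I expect the main obstacle to be the justification of ``trace $=$ integral of the diagonal'' for the difference operator: the individual operators $\cF(B,D)$ and $\cF(\nu,D)$ have kernels that are genuinely discontinuous on the diagonal and are only Hilbert--Schmidt, so this identity is false for them separately and the whole argument must be run on the difference. The two things needing care are therefore the jump cancellation (verifying that the discontinuity of each kernel is exactly $JD(t)$ and so drops out) and the invocation of a continuous-kernel trace theorem valid for trace class operators on $L^2([0,T];\mathbb C^{2n})$. A secondary, routine point is that interchanging $Tr$ with the time integral and with the finite-dimensional matrix trace is legitimate, which follows from the continuity of the difference kernel together with the trace class property.
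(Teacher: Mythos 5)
Your proof is correct and follows essentially the same route as the paper: both reduce to the constant-coefficient case of Lemma \ref{l3.3} by observing that $\cF(B,D)-\cF(\nu,D)$ is trace class, that its kernel is continuous, and that for such an operator the trace equals the integral of the diagonal, after which the conditional traces and the formal kernel traces cancel in pairs. The only cosmetic difference is that you establish continuity of the difference kernel by computing the diagonal jump $JD(t)$ directly and noting it is independent of $B$, whereas the paper cites Gohberg--Goldberg--Krupnik for the continuity of the kernel of the trace class difference.
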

\begin{proof}
Please note that
$\cF(B,D)-\cF(\nu,D)\in\cal{J}_1$. We have, $\cal{K}_{\cF(B,D)}-\cal{K}_{\cF(\nu,D)}$ is the integral kernel of a
trace class operator. By \cite[P.244, Theorem 2.1]{GGK}, the kernel $\cal{K}_{\cF(B,D)}-\cal{K}_{\cF(\nu,D)}$ is continuous, and hence by \cite[P.70, Theorem 8.1]{GGK},
$$
Tr(\cF(B,D)-\cF(\nu,D))=Tr\int_0^T (\cal{K}_{\cF(B,D)}-\cal{K}_{\cF(\nu,D)})(s,s)ds=Tr(\cal{K}_{\cF(B,D)}-\cal{K}_{\cF(\nu,D)}).
$$
The result is from Lemma \ref{l3.3}.

\end{proof}

Let $M(\alpha)=(\gamma_\alpha(T)Z_0,Z_1)_{2n\times2n}$. In the remaining part of this section, we will let \bea \label{def:f}f(\alpha)=\det M(\alpha)\det M^{-1}(0). \label{falpha}\eea Direct computation shows that \bea \gamma^{-1}_0(T)M(\alpha)&=&(\gamma^{-1}_0(T)\gamma_\alpha(T)Z_0, \gamma^{-1}_0(T)Z_1 ) \nonumber \\
    &=& P+((\gamma^{-1}_0(T)\gamma_\alpha(T)-I_{2n})Z_0, 0_{2n\times n} )  \nonumber \\
    &=& P\(I_{2n}+P^{-1}(\gamma^{-1}_0(T)\gamma_\alpha(T)-I_{2n})  Q_d \). \nonumber \eea
From \cite{HOW},  we have  $\frac{d}{d\alpha} (\gamma^{-1}_0(T)\gamma_\alpha(T)-I_{2n})\mid_{\alpha=0} =\int_0^T\gamma^{-1}_0(t)JD\gamma_0(t)dt$. Then

\bea f'(0)&=& Tr  P^{-1} \int_0^T\gamma^{-1}_0(t)JD\gamma_0(t)dt Q_d \nonumber \\  &=& Tr   \int_0^T\gamma^{-1}_0(t)JD\gamma_0(t)dt (I-Q) , \nonumber \eea
where the second equality from the fact that $I-Q=PQ_dP^{-1}$, $PQ_d=Q_d$. By Lemma \ref{lemma3.2}, we have

\begin{lem} For any $B$ such that $A-B$ is invertible,
\bea Tr \cF(B,D)= Tr \cal{K}_\cF= -f'(0). \nonumber \eea\end{lem}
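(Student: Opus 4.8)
The goal is to establish the three-way equality $\operatorname{Tr}\cF(B,D)=\operatorname{Tr}\cal{K}_\cF=-f'(0)$. Two of these identifications are already in hand: Corollary \ref{c3.5} gives $\operatorname{Tr}\cF(B,D)=\operatorname{Tr}\cal{K}_\cF$, and the paragraph immediately preceding the statement computes $f'(0)$ as a trace. So the plan is to reconcile the explicit formula for $f'(0)$ with the trace $\operatorname{Tr}\cal{K}_\cF$, and the entire substance is the sign and the projector bookkeeping.

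\medskip

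\noindent\textbf{Plan.} First I would take the formula derived just above the statement,
\begin{eqnarray}
f'(0)=\operatorname{Tr}\int_0^T\gamma_0^{-1}(t)\,JD\,\gamma_0(t)\,dt\,(I-Q),\nonumber
\end{eqnarray}
and rewrite it so that the projector sits to the left of the integrand. Using the cyclic invariance of the trace (valid here because the operands are finite matrices), I would move $\gamma_0^{-1}(t)$ across and collect, obtaining
\begin{eqnarray}
f'(0)=\operatorname{Tr}\int_0^T JD\,\gamma_0(t)\,(I-Q)\,\gamma_0^{-1}(t)\,dt.\nonumber
\end{eqnarray}
This is exactly $-\operatorname{Tr}$ of the lower branch of the integral kernel $\cal{K}_\cF$, namely the $-JD\gamma_0(t)(I_{2n}-Q)\gamma_0^{-1}(t)$ term, integrated over $[0,T]$.

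\medskip

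\noindent The next step is to invoke Lemma \ref{lemma3.2} with the choice $\beta=0$. That lemma asserts that
\begin{eqnarray}
\operatorname{Tr}\cal{K}_\cF=\operatorname{Tr}\int_0^T\big[\beta JD\gamma_0(t)Q\gamma_0^{-1}(t)-(1-\beta)JD\gamma_0(t)(I_{2n}-Q)\gamma_0^{-1}(t)\big]\,dt\nonumber
\end{eqnarray}
for every $\beta\in[0,1]$, and at $\beta=0$ the right-hand side reduces to exactly $-\operatorname{Tr}\int_0^T JD\gamma_0(t)(I_{2n}-Q)\gamma_0^{-1}(t)\,dt$, which is the expression I just identified with $f'(0)$. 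Hence $\operatorname{Tr}\cal{K}_\cF=-f'(0)$. Combining this with Corollary \ref{c3.5}, which supplies $\operatorname{Tr}\cF(B,D)=\operatorname{Tr}\cal{K}_\cF$, yields the full chain $\operatorname{Tr}\cF(B,D)=\operatorname{Tr}\cal{K}_\cF=-f'(0)$, completing the argument.

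\medskip

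\noindent\textbf{Main obstacle.} The conceptual work has all been front-loaded into the supporting results, so the only genuine care needed here is in the projector identities $I-Q=PQ_dP^{-1}$ and $PQ_d=Q_d$ (used to derive the given form of $f'(0)$) and in matching the sign conventions between the two branches of $\cal{K}_\cF$. The subtle point worth double-checking is that the derivative formula $\frac{d}{d\alpha}(\gamma_0^{-1}(T)\gamma_\alpha(T)-I_{2n})|_{\alpha=0}=\int_0^T\gamma_0^{-1}(t)JD\gamma_0(t)\,dt$, imported from \cite{HOW}, is applied to the matrix $M(\alpha)$ correctly, and that the trace of the finite-rank variation is being taken of a genuinely trace-class object; since $M(\alpha)$ is a fixed $2n\times2n$ matrix this is automatic. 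The flexibility afforded by the free parameter $\beta$ in Lemma \ref{lemma3.2} is precisely what makes the final matching clean, so no further estimates are required.
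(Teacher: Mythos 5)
Your proposal is correct and follows essentially the same route as the paper: the identity $\operatorname{Tr}\mathcal{F}(B,D)=\operatorname{Tr}\mathcal{K}_{\mathcal{F}}$ is quoted from Corollary \ref{c3.5}, $f'(0)$ is rewritten by cyclicity of the finite-dimensional trace as $\operatorname{Tr}\int_0^T JD\gamma_0(t)(I_{2n}-Q)\gamma_0^{-1}(t)\,dt$, and Lemma \ref{lemma3.2} (your choice $\beta=0$ being exactly the paper's use of that lemma) converts this into $-\operatorname{Tr}\mathcal{K}_{\mathcal{F}}$. The sign and projector bookkeeping in your write-up matches the paper's, so nothing further is needed.
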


From the above discussion, we have

\begin{thm}\label{hill} For any $B$ such that $A-B$ is invertible,
\bea  \det (I-\alpha \cF(B,D))= f(\alpha). \label{hillf} \eea
\end{thm}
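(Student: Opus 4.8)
The plan is to prove the identity \eqref{hillf} by comparing logarithmic derivatives: the preceding lemma computes the relevant derivative at a single base point, and I will \emph{shift} that base point along the affine family $B+\alpha_0 D$ in order to reach every value of $\alpha$. First I would record that both sides are entire functions of $\alpha$ equal to $1$ at $\alpha=0$. For the left side, $\cF(B,D)\in\mathcal{J}(P_N)$ by Proposition \ref{cor:trace}(ii), so by the discussion of Subsection \ref{sec2.1} the conditional Fredholm determinant $g(\alpha):=\det(I-\alpha\cF(B,D))$ is entire with $g(0)=1$. For the right side, $\gamma_\alpha(T)$ depends entire-analytically on $\alpha$, being the value at $T$ of the solution of a linear ODE whose coefficient $J(B+\alpha D)$ is affine in $\alpha$; hence $M(\alpha)=(\gamma_\alpha(T)Z_0,Z_1)$ has entire entries, $f(\alpha)=\det M(\alpha)\det M(0)^{-1}$ is entire, and $f(0)=1$, with $\det M(0)\neq 0$ precisely because $A-B$ is nondegenerate.

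The key device is a multiplicative cocycle relation. For a base point $\alpha_0$ with $A-B-\alpha_0 D$ invertible, the resolvent computation gives the operator identity
$$I-\alpha\cF(B,D)=\big(I-(\alpha-\alpha_0)\cF(B+\alpha_0 D,D)\big)\big(I-\alpha_0\cF(B,D)\big),$$
since both sides equal $(A-B-\alpha D)(A-B)^{-1}$. Each factor lies in $\mathcal{J}(P_N)$ (apply Proposition \ref{cor:trace}(ii) with $B$ replaced by $B+\alpha_0 D$), so Proposition \ref{prop2.5}(1) yields $g(\alpha)=\det\big(I-(\alpha-\alpha_0)\cF(B+\alpha_0 D,D)\big)\cdot g(\alpha_0)$. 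Differentiating at $\alpha=\alpha_0$, and using that the linear coefficient of a conditional Fredholm determinant is minus the conditional trace, gives
$$\frac{g'(\alpha_0)}{g(\alpha_0)}=-\,\mathrm{Tr}\,\cF(B+\alpha_0 D,D).$$

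Next I would invoke the preceding lemma with $B$ replaced by $B+\alpha_0 D$. Its hypothesis holds for $\alpha_0$ off the discrete eigenvalue set, and the fundamental solution of $(B+\alpha_0 D)+sD$ is $\gamma_{\alpha_0+s}$, so the associated $M$-matrix is $M(\alpha_0+s)$. Thus the lemma reads $\mathrm{Tr}\,\cF(B+\alpha_0 D,D)=-(\det M)'(\alpha_0)/\det M(\alpha_0)=-f'(\alpha_0)/f(\alpha_0)$. Combining the two displays gives $g'/g=f'/f$ at every such $\alpha_0$, hence $(g/f)'=0$ on the complement of the discrete zero set of $f$. That complement is connected and $g(0)/f(0)=1$, so $g\equiv f$ there; by continuity of the two entire functions, $g=f$ on all of $\mathbb{C}$, which is \eqref{hillf}.

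I expect the main obstacle to lie in the two analytic inputs rather than the algebra: first, justifying that the conditional Fredholm determinant is differentiable in $\alpha$ with $\frac{d}{d\alpha}\big|_{0}\det(I-\alpha F)=-\mathrm{Tr}\,F$ for $F\in\mathcal{J}(P_N)$, where $\mathrm{Tr}$ is the \emph{conditional} trace---this must be extracted from the entire-function structure recalled in Subsection \ref{sec2.1} together with the multiplicative property, since $\cF$ is merely Hilbert--Schmidt and not trace class, so the naive $\det(I+F)$ and its derivative are unavailable; and second, checking that all three operators in the cocycle identity belong to $\mathcal{J}(P_N)$ for the \emph{same} projection family, so that Proposition \ref{prop2.5}(1) genuinely applies. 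The remaining ingredients---the resolvent identity and the entire dependence of $\gamma_\alpha(T)$ on $\alpha$---are routine.
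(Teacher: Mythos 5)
Your proposal is correct and follows essentially the same route as the paper: both arguments compare logarithmic derivatives, use the cocycle identity $I-\alpha\cF(B,D)=\bigl(I-(\alpha-\alpha_0)\cF(B+\alpha_0 D,D)\bigr)\bigl(I-\alpha_0\cF(B,D)\bigr)$ together with the multiplicativity of the conditional Fredholm determinant to get $g'(\alpha_0)/g(\alpha_0)=-\mathrm{Tr}\,\cF(B+\alpha_0 D,D)$, and then invoke the preceding lemma (that $\mathrm{Tr}\,\cF(B,D)=-f'(0)$) at the shifted base point to identify this with $f'(\alpha_0)/f(\alpha_0)$. Your closing step, integrating $(g/f)'=0$ on the connected complement of the discrete zero set of $f$ and extending by continuity, is a slightly more careful rendering of the paper's ``same zero points'' conclusion, but it is the same proof.
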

\begin{proof} Let $g(\alpha)= \det (I-\alpha \cF(B,D))$, then $f$ and $g$ are analytic functions on $\mathbb C$ with same zero points and $f(0)=g(0)=1$.   We will show that for $f(\alpha)\neq0$,
\begin{eqnarray}
\label{formula1}g'(\alpha)g^{-1}(\alpha)=f'(\alpha)f^{-1}(\alpha),
\end{eqnarray} which  implies (\ref{hillf}).  For any $\alpha_0$ with $f(\alpha_0)\neq0$, we have
$$ g(\alpha)= \det (I-(\alpha-\alpha_0)\cF(B-\alpha_0 D,D) g(\alpha_0),$$
so $g'(\alpha_0)g^{-1}(\alpha_0)= -Tr \cF(B-\alpha D,D)     $.  On the other hand,  $$f(\alpha)=\det M(\alpha)\det M^{-1}(\alpha_0) f(\alpha_0).$$ Using $B+\alpha D$ instead of $B$ in  Corollary  \ref{c3.5}, we get
\bea -Tr \cF(B-\alpha D,D)= \frac{d}{d\alpha}\det M(\alpha)\det M^{-1}(\alpha_0)|_{\alpha=\alpha_0}=f'(\alpha)f^{-1}(\alpha), \nonumber \eea
and hence the equality (\ref{formula1}) is proved.
\end{proof}

\begin{rem}
The Hill-type formula \eqref{hillf} shows that $f(\alpha)$  is independent on the choice of the frames of $V_0$ and $V_1$. In fact, we can get this by easy computation  of  \eqref{falpha}.
\end{rem}

\subsection{ Krein-type  trace formula  }\label{sec3.2}

Since $\cF=\cF(B,D)\in\cal{J}(P_N)$, we have
\bea\label{expansion1}
\det (I-\alpha \cF)=exp\left(-\sum\limits_{m=1}^\infty{1\over m}\alpha^m Tr ( \cF^m)\right)
\eea
Next, we will give the expansion of $f(\alpha)$.
Recall that  $P=(Z_0, \gamma^{-1}_0(T)Z_1)$, $Q_d=(Z_0,0_{2n\times n})$.
Noting that $M(0)=\gamma_{0}(T)(Z_0,\gamma_0^{-1}(T)Z_1)=\gamma_0(T)P$, and
$\det(\gamma_0(T))=1$, we have
\bea
f(\alpha)&=&\det\left(P\(I_{2n}+P^{-1}(\gamma^{-1}_0(T)\gamma_\alpha(T)-I_{2n}) Q_d \)\right)\det(P^{-1})\nonumber\\
         &=&\det\(I_{2n}+P^{-1}(\gamma^{-1}_0(T)\gamma_\alpha(T)-I_{2n}) Q_d \).\nonumber
\eea
Let $\hat{\gamma}_\alpha(T)=\gamma^{-1}_0(T)\gamma_\alpha(T)$, by \cite[Section 2.2]{HOW},
\bea
\hat{\gamma}_\alpha(T)-I_{2n}=\sum\limits_{j=1}^\infty \alpha^j M_j,\nonumber
\eea
where for $\hat{D}(t)=\gamma_{0}^T(t)D(t) \gamma_{0}(t)$,
\bea
M_j=\int_0^TJ\hat{D}(t_1)\int_0^{t_1}J\hat{D}(t_2)\cdots\int_0^{t_{j-1}}J\hat{D}(t_j)dt_j\cdots dt_2dt_1.\nonumber
\eea
Let
$
G_j=P^{-1} M_jQ_d,
$
we have that $$f(\alpha)=\det(I_{2n}+\sum\limits_{j=1}^\infty \alpha^j G_j).$$ Since $f(\alpha)$ vanishes nowhere near $0$, we can write $f(\alpha)=e^{g(\alpha)}$, then by \cite[Formula 2.6]{HOW}, we have
\bea\label{expansion2}
g^{(m)}(0)/m!=\sum\limits_{j=1}^m{(-1)^{k+1}\over k}\left(\sum\limits_{j_1+\cdots+j_k=m}Tr(G_{j_1}\cdots G_{j_k})\right).
\eea
Combining (\ref{expansion1}) and (\ref{expansion2}) with Theorem \ref{hill}, we have the following trace formula.
\begin{thm}
With the above notations, we have that
\begin{eqnarray}
Tr ( \cF^m)=m\sum\limits_{k=1}^m {(-1)^k\over k}\left(\sum\limits_{j_1+\cdots+j_k=m}Tr(G_{j_1}\cdots G_{j_k})\right).\nonumber
\end{eqnarray}
\end{thm}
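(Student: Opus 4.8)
The plan is to obtain the identity by comparing the coefficient of $\alpha^m$ in the logarithms of the two sides of the Hill-type formula $\det(I-\alpha\cF)=f(\alpha)$ furnished by Theorem \ref{hill}. Both sides are entire functions of $\alpha$ taking the value $1$ at $\alpha=0$ and nonvanishing in a neighbourhood of the origin: on the left this holds because $\cF\in\cal{J}(P_N)$, so the conditional Fredholm determinant is entire with value $1$ at $\alpha=0$; on the right $f$ is the determinant of an entire matrix-valued function with $f(0)=1$. Hence $\log$ of each side is analytic near $\alpha=0$ and vanishes there, and it suffices to match the two Taylor series term by term.

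For the left side I would invoke the expansion (\ref{expansion1}), namely $\log\det(I-\alpha\cF)=-\sum_{m=1}^\infty\frac{1}{m}\alpha^m\, Tr(\cF^m)$, so that the coefficient of $\alpha^m$ equals $-\frac{1}{m}Tr(\cF^m)$. The term $m=1$ here is the conditional trace of $\cF$, which is meaningful precisely because $\cF\in\cal{J}(P_N)$ by Proposition \ref{cor:trace}; for $m\geq2$ the power $\cF^m$ is genuinely trace class (as $\cF\in\cal{J}_2$), so $Tr(\cF^m)$ is the ordinary trace and no conditional interpretation is required.

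For the right side I would set $f(\alpha)=e^{g(\alpha)}$ with $g(\alpha)=\log f(\alpha)$, so $g(0)=0$ and the coefficient of $\alpha^m$ in $g$ is $g^{(m)}(0)/m!$. Feeding the representation $f(\alpha)=\det(I_{2n}+\sum_{j\geq1}\alpha^j G_j)$ into the expansion (\ref{expansion2}) evaluates this coefficient as $\sum_{k=1}^m\frac{(-1)^{k+1}}{k}\sum_{j_1+\cdots+j_k=m}Tr(G_{j_1}\cdots G_{j_k})$. Equating the two coefficients of $\alpha^m$ and multiplying through by $-m$ converts the sign $(-1)^{k+1}$ into $(-1)^k$ and produces exactly the asserted formula.

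Once Theorem \ref{hill} and the two expansions (\ref{expansion1}) and (\ref{expansion2}) are available, the argument is pure bookkeeping. The only point that genuinely needs care---and which I regard as the main obstacle---is justifying the term-by-term comparison: one must know that both logarithmic series converge in a common disc about $\alpha=0$ and, more delicately, that (\ref{expansion1}) remains valid in the conditional, non-trace-class setting where $\cF$ lies in $\cal{J}_2\setminus\cal{J}_1$. This last point is already secured upstream by the theory of the conditional Fredholm determinant $\det(I-\alpha\cF)$ developed in Subsection \ref{sec2.1}, together with the finite-trace condition $\cF\in\cal{J}(P_N)$.
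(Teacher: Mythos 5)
Your proposal is correct and follows exactly the paper's own route: invoke the Hill-type formula $\det(I-\alpha\cF)=f(\alpha)$ from Theorem \ref{hill}, expand the logarithm of the left side via (\ref{expansion1}) and of the right side via (\ref{expansion2}) with $f(\alpha)=\det(I_{2n}+\sum_j\alpha^j G_j)$, and equate coefficients of $\alpha^m$ after multiplying by $-m$. The additional care you take over the convergence of both logarithmic series near $\alpha=0$ and the validity of (\ref{expansion1}) for $\cF\in\cal{J}(P_N)\setminus\cal{J}_1$ is consistent with, and slightly more explicit than, what the paper records.
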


For $m=1,2$, the trace formula is simple
\bea
Tr \cF=-Tr(G_1),\quad Tr ( \cF^2)=Tr(G_1^2)-2Tr(G_2). \lb{tracef}
\eea
Since for $m\geq2$, $\cF(B,D)$ is trace class operator, we have
\bea
Tr (\cF^m)=\sum_{j} \lambda_j^{-m} ,\nonumber
\eea
where each $\lambda_j$ appears as many times as its multiplicity.


\subsection{Relation with the eigenvalue problem of  Sturm-Liouville systems}\label{sec3.3}
In \cite{HW}, the Hill-type formula and Krein-type trace formula were given for Sturm-Liouville system. In this subsection, we will study the relationship between the formulas for general Hamiltonian systems and that for Sturm-Liouville systems.
When the Hamiltonian system comes from the Legendre transformation of Sturm-Liouville systems, the operator $\cF(B,D)\in\cal{J}_1$, then
\bea  \det(I-\cF)&=&\prod_{j}(1-\lambda_j^{-1}),  \lb{3.3.1} \\
Tr(\cF)&=&\sum_{j} \lambda_j^{-1},   \lb{3.3.2}  \eea
where $\lambda_j's$ are eigenvalues the corresponding Sturm-Liouville systems:
 \begin{eqnarray}
-(P\dot{y}+Qy)^\cdot+Q^T\dot{y}+(R+{\lambda} R_1)y=0, \label{sl1} \end{eqnarray}
where  $Q$ is a continuous path of $n\times n$ matrices, and $P, R, R_1$ are continuous paths of $n\times n$ symmetric matrices on $[0,T]$. Instead of Legendre convexity condition, we assume that for any $t\in [0,T]$,  $P(t)$  is  invertible.
The boundary condition given in follows:
 let $\Lambda_0, \Lambda_1\in Lag(2n)$,  which are  phase spaces with standard symplectic structure.  Set  $x=P\dot{y}+Qy$, $z=(x,y)^T$, and the boundary condition is given by
\bea z(0)\in\Lambda_0,\quad  z(T)\in\Lambda_1. \label{slb1} \eea

By the standard Legendre transformation,  the linear  system
(\ref{sl1}) with the boundary conditions (\ref{slb1}) corresponds to the linear Hamiltonian system,
 \bea \dot{z}=JB_\lambda(t)z, \quad z(0)\in\Lambda_0,\quad  z(T)\in\Lambda_1, \label{h1}\eea
with \bea  B_\lambda(t)=\left(\begin{array}{cc}P^{-1}(t)& -P^{-1}Q(t) \\
-Q(t)^TP^{-1}(t)  & Q(t)^TP^{-1}(t)Q(t)-R(t)-\bar{\lambda}  R_1(t)\nonumber
\end{array}\right).\label{b2} \eea
 We denote  $\gamma_\lambda(t)$  the
fundamental solution of (\ref{h1}). Let $Z_0,Z_1$ be frames
of $\Lambda_0,\Lambda_1$.
To simplify the notation, set $\cA=-\d(P\d+Q)+Q^T\d+R$, which is a
self-adjoint operator on $L^2([0,T], \mathbb R^{n})$ with domain:
$$   D(\Lambda_0,\Lambda_1)=\{y\in W^{2,2}([0,T],  \mathbb R^{n}),   z(0)\in\Lambda_0,   z(T)\in\Lambda_1  \}.      $$
 We  assume
$\cA$ is  nondegenerate,  that is, $0$ is not an eigenvalue of
(\ref{sl1}-\ref{slb1}). It is obvious that $\lambda$ is a nonzero
eigenvalue of the system(\ref{sl1}-\ref{slb1}) if and only if
$-{1\over \lambda}$ is an eigenvalue of $R_1\cA^{-1}$. In what follows, the multiplicity of an eigenvalue $\lambda_j$ means the algebraic multiplicity of $R_1\cA^{-1}$ at ${-1/\lambda_j}$.
It was proven in \cite{HW16} that
\begin{eqnarray}\label{3.28}
\prod_{j}(1-\lambda_j^{-1})=\det(I+R_1\cA^{-1})=\det (\ga_{1}(T) Z_0,Z_1) \cdot
\det(\ga_0(T) Z_0,Z_1)^{-1},     \end{eqnarray}
Let $D=diag(0_n, R_1)$, then Hill formula (\ref{hillf}) shows that
\bea \label{3.29} \det(I+\mathcal{F}(B_0,D))=\det (\ga_{1}(T) Z_0,Z_1)\cdot
\det(\ga_0(T) Z_0,Z_1)^{-1}, \eea
we have
  \begin{cor}\label{cor3.3.1} Under the above notations
 \begin{eqnarray}\label{3.30}
 \det(I+\mathcal{F}(B_0,D))=\det(I+R_1\cA^{-1}),   \end{eqnarray}
consequently,  $\sigma(\cF(B_0,D))=\sigma(R_1\cA^{-1} )$  with the same multiplicity.
\end{cor}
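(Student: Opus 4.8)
The plan is to obtain (\ref{3.30}) by matching two expressions for one and the same frame determinant, and then to upgrade that single identity to an identity of entire functions whose zeros read off the two spectra. First I would record that the right-hand sides of (\ref{3.28}) and (\ref{3.29}) are literally the same quantity $\det(\ga_1(T)Z_0,Z_1)\cdot\det(\ga_0(T)Z_0,Z_1)^{-1}$: the Sturm--Liouville determinant $\det(I+R_1\cA^{-1})$ was identified with it in \cite{HW16}, while the Hamiltonian determinant $\det(I+\cF(B_0,D))$ is the value at $\alpha=-1$ of the Hill formula of Theorem \ref{hill}, using that $B_0-D$ coincides with $B_\lambda$ at $\lambda=1$, so that the fundamental solution $\gamma_{-1}(T)$ of $\dot\gamma=J(B_0-D)\gamma$ equals $\ga_1(T)$. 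Comparing the two yields (\ref{3.30}) immediately.

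For the spectral statement I would introduce a spectral parameter by replacing $R_1$ with $\mu R_1$, $\mu\in\mathbb C$. Scaling $R_1$ amounts to running the same Legendre transformation with the matrix $B_0-\mu D$, $D=diag(0_n,R_1)$, in place of $B_0-D$; hence the Sturm--Liouville fundamental solution for the scaled problem, say $\ga^{SL}_\mu$, is exactly the Hamiltonian fundamental solution $\gamma_{-\mu}$ of $\dot\gamma=J(B_0-\mu D)\gamma$. Applying (\ref{3.28}) with $\mu R_1$ on one side and the Hill formula (\ref{hillf}) at $\alpha=-\mu$ on the other, both reduce to $\det(\ga^{SL}_\mu(T)Z_0,Z_1)\cdot\det(\ga_0(T)Z_0,Z_1)^{-1}$, giving
\begin{eqnarray}
\det(I+\mu\,\cF(B_0,D))=\det(I+\mu\,R_1\cA^{-1})\qquad\hbox{for all }\mu\in\mathbb C.\nonumber
\end{eqnarray}

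Finally I would extract the spectra from this identity of entire functions of $\mu$. Since the Hamiltonian here arises from a Legendre transformation, $\cF(B_0,D)\in\cal{J}_1$, and $R_1\cA^{-1}\in\cal{J}_1$ as the product of a bounded operator with a compact resolvent; both determinants are therefore classical Fredholm determinants, which factor as $\prod_j(1+\mu\nu_j)$ over the eigenvalues counted with algebraic multiplicity. Equality of the two entire functions forces equality of their zero multisets, i.e. the nonzero eigenvalues of $\cF(B_0,D)$ and of $R_1\cA^{-1}$ coincide with identical algebraic multiplicities, which is the asserted $\sigma(\cF(B_0,D))=\sigma(R_1\cA^{-1})$. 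I expect the only delicate points to be bookkeeping: the precise matching $\gamma_{-\mu}=\ga^{SL}_\mu$ under the Legendre transform (including the $\bar\lambda$ appearing in $B_\lambda$, which is harmless when comparing determinants on the real axis), the confirmation that $\cF(B_0,D)$ is genuinely trace class so that the determinant encodes algebraic and not merely geometric multiplicity, and the zero eigenvalue, which is invisible to the determinant and must be treated separately (e.g. through the common kernel forced by degeneracies of $D=diag(0_n,R_1)$).
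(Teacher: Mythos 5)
Your proposal is correct and follows essentially the same route as the paper: both obtain \eqref{3.30} by identifying the right-hand sides of \eqref{3.28} and \eqref{3.29} as the same frame-determinant ratio, then replace $R_1$ by a scalar multiple to get the identity of entire functions $\det(I+\mu\cF(B_0,D))=\det(I+\mu R_1\cA^{-1})$ and read off the spectra from its zeros. The only cosmetic difference is at the last step, where you invoke the classical Fredholm product formula (legitimate here since $\cF(B_0,D)\in\cal{J}_1$), whereas the paper cites the conditional-determinant statement that $\lambda_0$ has algebraic multiplicity $k$ iff $-\lambda_0^{-1}$ is a zero of order $k$ of $\det(I+z\mathfrak{K})$.
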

\begin{proof} Please note that  \eqref{3.30} follows from \eqref{3.28} and \eqref{3.29} directly. In \eqref{3.30}, let $\lambda R_1$ take place of $R_1$, and we have
\begin{eqnarray}\label{3.31}
 \det(I+\lambda\mathcal{F}(B_0, D))=\det(I+\lambda R_1\cA^{-1}). \nonumber  \end{eqnarray}
This shows that $\sigma(\cF(B_0,D))=\sigma(R_1\cA^{-1} )$. Moreover, by \cite[Theorem 3.5]{HW},  for  $\mathfrak{K}\in\cal{J}(P_N)$, $\lambda_{0}$ is an eigenvalue of $\mathfrak{K}$ of algebraic multiplicity $k$ if and only if $-\lambda_{0}^{-1}$ is zero point of $\det(I+z\mathfrak{K})$ of order $k$. Since $\det(I+\lambda\mathcal{F}(B_0,D))=\det(I+\lambda R_1\cA^{-1})$, the algebraic multiplicity of $\lambda_j\in\sigma(\cF(B_0,D))$ is same as that of $\lambda_j\in\sigma(R_1\cA^{-1} )$.
\end{proof}

In general (\ref{3.3.1}-\ref{3.3.2}) is not right.  For example, let $n=1$, $T=1$,   $B=0_{2}$,
$D=\left(\begin{array}{cc}0 & 1 \\1  & 0 \end{array}\right) $, and let $V_0,V_1$ with frame $(1,0)^T, (0,1)^T$ separately. Easy computation shows that $JD=\left(\begin{array}{cc}-1 & 0 \\0  & 1 \end{array}\right)  $, and $\ga_\lambda(t)=\left(\begin{array}{cc} e^{-\lambda t} & 0 \\0  & e^{\lambda t} \end{array}\right) $, which implies that the corresponding Hamiltonian system has no eigenvalue. Therefore $\prod_{j}(1-\lambda_j^{-1})$ should be understood as $1$ and $\sum_{j} \lambda_j^{-1} =0$. On the other hand, from  the Hill-type formulas (\ref{hillf}), we have
\bea  \det(I-\cF(0_2,D))=e^{-1}\not=\prod_j(1-\lambda_j^{-1}), \quad Tr \cF(0_2,D)=1\not=\sum_j \lambda_j^{-1}. \nonumber\eea

\section{Brake symmetry decomposition for $S$-periodic orbits}
In this section, we will deal with the brake symmetry decomposition. The relationship between the conditional Fredholm determinant of $S$-periodic solutions with brake symmetry and that of the solution of corresponding Lagrangian boundary problem will be considered. At first, we need the following lemma.

\begin{lem} Recall that $\bar E^{\pm}_S$ and $E_S^\pm$ are defined in (\ref{barespm}) and (\ref{espm})
\bea\det(I-\cF(B,D;\bar{E}^\pm_S))=\det(I-\cF(B,D;E^\pm_S)). \nonumber \eea\end{lem}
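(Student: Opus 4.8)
The plan is to build an explicit isometric isomorphism between $\bar E^\pm_S$ and $E^\pm_S$ that conjugates $\cF(B,D;\bar E^\pm_S)$ into $\cF(B,D;E^\pm_S)$, and then to read off the determinant identity from the unitary invariance of the conditional Fredholm determinant. Concretely, I would take the restriction map $\Phi\colon \bar E^\pm_S\to E^\pm_S$, $\Phi z=z|_{[0,T/2]}$, whose inverse is the $g$-symmetric extension $w\mapsto\tilde w$ with $\tilde w(t)=w(t)$ on $[0,T/2]$ and $\tilde w(t)=\pm N w(T-t)$ on $[T/2,T]$. First I would check that $\Phi$ matches the boundary data. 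For $z\in\bar E^\pm_S$ one has $z(t)=\pm N z(T-t)$; evaluating at $t=T/2$ gives $N z(T/2)=\pm z(T/2)$, i.e. $z(T/2)\in V^\pm(N)$, while evaluating at $t=T$ together with $z(0)=Sz(T)$ and the relation $NS^T=SN$ gives $SN z(0)=\pm z(0)$, i.e. $z(0)\in V^\pm(SN)$. Here I would record that $N$ and $SN$ are symmetric orthogonal involutions (from $N^T=N$, $N^2=I_{2n}$ and $(SN)^T=N S^T=SN$), so $V^\pm(N)$ and $V^\pm(SN)$ are precisely their $\pm1$-eigenspaces; dually, these same two conditions on $w$ are exactly what makes the symmetric extension $\tilde w$ continuous at $T/2$ and compatible with $\tilde w(0)=S\tilde w(T)$, hence a genuine element of $\bar E^\pm_S$. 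Thus $\Phi$ is a bijection of the two $W^{1,2}$ spaces.

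Next I would promote $\Phi$ to a unitary. The change of variable $t\mapsto T-t$ in $\int_{T/2}^T|z|^2$, using $|z(T-s)|=|N z(s)|=|z(s)|$, yields $\|z\|^2_{L^2([0,T])}=2\|\Phi z\|^2_{L^2([0,T/2])}$, so $U:=\sqrt2\,\Phi$ extends to an isometric isomorphism onto $E^\pm_S$. I would then verify the intertwining. Since restriction commutes with $d/dt$, one has $\Phi\,A|_{\bar E^\pm_S}=A|_{E^\pm_S}\,\Phi$, and the scalar $\sqrt2$ cancels in conjugation, giving $U\,A|_{\bar E^\pm_S}\,U^{-1}=A|_{E^\pm_S}$; the condition $NJ=-JN$ is what guarantees that $A|_{\bar E^\pm_S}$ preserves the $\pm$-symmetric subspace in the first place. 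The relations $NB(T-t)=B(t)N$ and $ND(T-t)=D(t)N$ show that $B$ and $D$ leave the symmetric subspace invariant and that $U B U^{-1}=B$, $U D U^{-1}=D$ as multiplication operators on $[0,T/2]$. Consequently
$U\,\cF(B,D;\bar E^\pm_S)\,U^{-1}=(UDU^{-1})\big(U(A|_{\bar E^\pm_S}-B)U^{-1}\big)^{-1}=D(A|_{E^\pm_S}-B)^{-1}=\cF(B,D;E^\pm_S)$.

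To finish, I would deduce equality of the conditional Fredholm determinants from this unitary equivalence via the representation (\ref{eq1.5a}), $\det(I-\cF)={\det}_2(I-\cF)\lim_k e^{Tr(P_k\cF P_k)}$. The regularized determinant ${\det}_2$ is invariant under unitary conjugation, so it remains to match the conditional traces. For this I would transport the defining projection sequence through $U$: because $U$ intertwines the two self-adjoint operators $A|_{\bar E^\pm_S}$ and $A|_{E^\pm_S}$, both with discrete spectrum, it carries the spectral projections $P_k$ of the former to admissible exhausting projections of the latter, and cyclicity gives $Tr\big(UP_kU^{-1}\cdot U\cF U^{-1}\cdot UP_kU^{-1}\big)=Tr(P_k\cF P_k)$ for every $k$, so the limits coincide. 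Both operators lie in the trace-finite class by Proposition \ref{cor:trace}, hence both conditional determinants are well defined and equal.

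I expect the genuine obstacle to be exactly this last point, not the algebra. The conditional Fredholm determinant is defined relative to a chosen exhausting projection sequence and is not a priori an invariant of the unitary equivalence class alone; the real content is to confirm that $U$ respects the projection data, i.e. that it maps the spectral projections of $A|_{\bar E^\pm_S}$ onto an admissible sequence for $A|_{E^\pm_S}$, so that the conditional trace — and therefore the determinant — is preserved term by term. The intertwining computations are routine once the four symmetry relations $NJ=-JN$, $NS^T=SN$, $NB(T-t)=B(t)N$, $ND(T-t)=D(t)N$ and the orthogonality of $N$ are invoked, but the careful bookkeeping of the boundary conditions across the fold at $t=T/2$ should be handled with attention.
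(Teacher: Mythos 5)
Your proposal is correct and follows essentially the same route as the paper: both arguments use the isomorphism $\bar E^\pm_S\cong E^\pm_S$ to equate the regularized determinants ${\det}_2$, then match the conditional traces by intertwining with $A$ so that the spectral projections $\bar P^\pm_N$ are carried onto $P_N$, and conclude via (\ref{eq1.5a}). You spell out the restriction/extension map and the boundary bookkeeping more explicitly than the paper does, and you correctly identify the preservation of the projection data as the only nontrivial point, but there is no substantive difference in method.
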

\begin{proof} We only prove the equality for $E^+_S$. Let $U^{+}: \bar{E}^{+}_S\to E^{+}_S$ be the isomorphic maps, then a function $f$ is an eigenvector of $\cF(B,D;\bar{E}^+_S)$ corresponding to the eigenvalue $\lambda$ if and only if  $U^+f$ is an eigenvector of $\cF(B,D;{E}^+_S)$ corresponding to the same eigenvalue. By \cite{Si},
\begin{eqnarray} {\det} _2(I-\cF(B,D;\bar{E}^+_S))={\det} _2(I-\cF(B,D;E^+_S)). \nonumber \end{eqnarray}
Moreover, let $\bar P^+_{N}$ be the orthogonal projection on $\bar{E}_N^+=\bar E^+_S\cap \oplus_{|\nu|\leq N}(\ker (A_{\bar E^+_S}-\nu))$, then the conditional trace
$$Tr(\cF(B,D;\bar{E}^+_S))=\lim\limits_{N\to\infty} Tr \( \cF(B,D;\bar{E}^\pm_S)|_{\bar E^+_N}\).$$
Similarly, let $$E_N^+=E^+_S\cap \oplus_{|\nu|\leq N}(\ker (A_{E^+_S}-\nu)),$$ and $P_N$ the orthogonal projection onto $E^+_N$,
Since $U^+A_{\bar E^+_S}=A_{E^+_S}U^+$, $U^+ \bar E^+_N=E^+_N$,
we have
$$
Tr \( \cF(B,D;\bar{E}^\pm_S)|_{\bar E^+_N}\)=Tr \(  \cF(B,D;{E}^\pm_S)|_{ E^+_N}\).
$$
It follows that the conditional trace
\begin{eqnarray}
Tr(\cF(B,D;\bar{E}^+_S))=Tr(\cF(B,D;{E}^+_S)).\nonumber
\end{eqnarray}
By (\ref{eq1.5a}), we have the desired equality.
\end{proof}

By \cite{HS1}, the condition (\ref{4.4}) implies that
 \bea  A|_{E_S}-B-\lambda D=(A|_{E_S}-B-\lambda D)|_{E^+_S}\oplus (A|_{E_S}-B-\lambda D)|_{E^-_S}. \nonumber \eea
 We have
\begin{thm}\label{decom} Under the above conditions, \bea \det(I-\cF(B,D;E_S))=\det(I-\cF(B,D;E^+_S)\cdot \det(I-\cF(B,D;E^+_S)).  \nonumber\eea
and
\bea Tr \cF^k(B,D;E_S)=Tr\cF^k(B,D;E^+_S)+Tr\cF^k(B,D;E^-_S).\nonumber \eea

 \end{thm}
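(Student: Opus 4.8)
The theorem asserts two things for the $\mathbb{Z}_2$-symmetric $S$-periodic problem: a multiplicative splitting of the conditional Fredholm determinant, and an additive splitting of the traces of all powers $\cF^k$. Both follow from the direct-sum decomposition $A|_{E_S}-B-\lambda D=(A|_{E_S}-B-\lambda D)|_{E^+_S}\oplus(A|_{E_S}-B-\lambda D)|_{E^-_S}$ recalled just above the statement (quoted from \cite{HS1}), which is the consequence of the group action $g$ commuting with $A|_{E_S}-B-\lambda D$ as in \eqref{4.4}. The plan is to first reduce the determinant identity to the splitting property of the conditional Fredholm determinant under an orthogonal direct-sum decomposition, and then obtain the trace identities as a formal corollary of the determinant identity by rescaling $D\mapsto\alpha D$ and comparing Taylor coefficients.

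First I would record the decomposition $E_S=\bar E^+_S\oplus\bar E^-_S$ induced by the eigenspaces of the involution $g$, which is an \emph{orthogonal} splitting of the Hilbert space $\cH$ since $g$ is generated by $N\in O(2n)$ and hence acts orthogonally. Because $B$ and $D$ commute with $g$, the operator $\cF(B,D;E_S)=D(A|_{E_S}-B)^{-1}$ preserves each summand, so it decomposes as $\cF(B,D;E_S)=\cF(B,D;\bar E^+_S)\oplus\cF(B,D;\bar E^-_S)$. Applying part 2) of Proposition \ref{prop2.5}, with the projections chosen as $P_N^{(1)}\oplus P_N^{(2)}$ adapted to the two summands, gives
\bea
\det(I-\cF(B,D;E_S))=\det(I-\cF(B,D;\bar E^+_S))\cdot\det(I-\cF(B,D;\bar E^-_S)).\nonumber
\eea
Finally I would invoke the preceding Lemma, which identifies $\det(I-\cF(B,D;\bar E^\pm_S))=\det(I-\cF(B,D;E^\pm_S))$ via the isomorphism $U^\pm:\bar E^\pm_S\to E^\pm_S$, to rewrite the right-hand side in terms of the Lagrangian-boundary operators $E^\pm_S$ of \eqref{espm}. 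This yields the determinant formula (modulo the evident typographical point that the second factor should read $E^-_S$).

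For the trace identity, I would not compute $Tr\,\cF^k$ directly but instead extract it from the determinant relation by the standard generating-function argument used in Subsection \ref{sec3.2}. Replacing $D$ by $\alpha D$ in the determinant identity and using the expansion \eqref{expansion1}, namely $\det(I-\alpha\cF)=\exp\big(-\sum_{m\geq1}\tfrac{1}{m}\alpha^m Tr(\cF^m)\big)$, for each of the three operators $\cF(B,\alpha D;E_S)$, $\cF(B,\alpha D;E^+_S)$, $\cF(B,\alpha D;E^-_S)$, the multiplicative splitting of determinants becomes an additive splitting of the exponents. Matching the coefficients of $\alpha^k$ in the resulting series then gives $Tr\,\cF^k(B,D;E_S)=Tr\,\cF^k(B,D;E^+_S)+Tr\,\cF^k(B,D;E^-_S)$ for every $k$.

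The main obstacle is not the algebra of the splitting but the functional-analytic bookkeeping underlying the use of Proposition \ref{prop2.5}: one must verify that the trace-finite condition is compatible with the decomposition, i.e. that the adapted projection sequence $P_N^{(1)}\oplus P_N^{(2)}$ genuinely satisfies conditions (1)--(2) and that each $\cF(B,D;\bar E^\pm_S)$ lies in the corresponding class $\cal{J}(P_N^{(i)})$. This in turn rests on the self-adjointness of $A|_{E_S}$ together with the fact that $g$ permutes its eigenspaces into the $\pm 1$ eigenspaces of $g$, so that the spectral projections defining $\cal{J}(P_N)$ respect the symmetry; the commutation $U^\pm A_{\bar E^\pm_S}=A_{E^\pm_S}U^\pm$ established in the preceding Lemma is precisely what guarantees this compatibility and transports the conditional trace correctly between the two models.
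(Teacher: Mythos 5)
Your proposal is correct and follows essentially the same route as the paper: the direct-sum decomposition $(A|_{E_S}-B-\lambda D)=(A|_{E_S}-B-\lambda D)|_{E^+_S}\oplus(A|_{E_S}-B-\lambda D)|_{E^-_S}$ from the $g$-equivariance, part 2) of Proposition \ref{prop2.5} for the multiplicativity over the orthogonal splitting $E_S=\bar E^+_S\oplus\bar E^-_S$, and the preceding Lemma to pass from $\bar E^\pm_S$ to $E^\pm_S$ (and you correctly spot the typo $E^+_S$ versus $E^-_S$ in the second factor). Your derivation of the trace identity from the determinant identity via the generating-function expansion \eqref{expansion1} is a sound way to obtain what the paper leaves implicit, and is consistent with how the conditional trace is handled in the proof of the preceding Lemma.
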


In \cite{HW}, we built  the following Hill-type formula for $S$-periodic solutions
\bea   \det(I-\cF(B,D;S))= \det(S\ga_1(T)-I_{2n})/\det(S\ga_0(T)-I_{2n}). \lb{hills} \eea
In the remaining part of the section, we will show that the right side of (\ref{hills}) could also be decomposed under $\mathbb Z_2$ action.
From the brake symmetry, we have
  \bea S\ga_\lambda(T)=SN\ga^{-1}_\lambda(T/2)N\ga_\lambda(T/2)\nonumber\eea
By changing basis, we suppose $N=diag(I_n,-I_n)$. Since $S\in\Sp(2n)\cap O(2n)$, then we can assume $S=\left(\begin{array}{cc}C & -D \\D  & C \end{array}\right)$, where $C^TD$ is symmetric, and $C^TC+D^TD=I$. Since $NS^T=SN$, then we have   $C=C^T$, $D=D^T$ and $CD=DC$. So we can choose  basis such that both $C$ and $D$ are diagonizied,
and we may assume $C=\cos(\theta), D=\sin(\theta)$ with $\theta=diag(\theta_1,\cdots,\theta_n)$.
We write $S$ as $R_\theta:= \left(\begin{array}{cc}\cos(\theta) & -\sin(\theta) \\ \sin(\theta)  & \cos(\theta) \end{array}\right)$, it is obvious that $R(\theta/2)^2=S$. Easy computation shows that
\bea  R_{\theta/2}NR_{\theta/2}=N, SN=  \left(\begin{array}{cc}\cos(\theta) & \sin(\theta) \\ \sin(\theta)  & -\cos(\theta) \end{array}\right). \nonumber \eea
We have
\bea  \det(S\ga_\lambda(T)-I_{2n})&=&\det(SN\ga_\lambda^{-1}(T/2)N\ga_\lambda(T/2)-I_{2n}  )\nonumber \\   \nonumber    &=&\det(R_{\theta/2}NR_{\theta/2} R_{\theta/2}^{-1}  \ga_\lambda^{-1}(T/2)N\ga(T/2)R_{\theta/2}-I_{2n}  )
\\   \nonumber    &=& \det(N \hat{M}_\lambda^{-1}N\hat{M}_\lambda-I_{2n}),\nonumber
\eea
where we set $\hat{M}_\lambda=\ga_\lambda(T/2)R_{\theta/2}$.  By assume
$\hat{M}_\lambda=\left(\begin{array}{cc}a_\lambda & b_\lambda \\c_\lambda  & d_\lambda \end{array}\right)$, then
\bea   \det(S\ga_\lambda(T)-I_{2n})&=& (-1)^n \det( N\hat{M}_\lambda-\hat{M_\lambda}N ) \nonumber \\  &=& (-1)^n\det \left(\begin{array}{cc}0_n & 2b_\lambda \\ -2c_\lambda  & 0_n \end{array}\right)=(-1)^n2^{2n}\det(b_\lambda)\det(c_\lambda) . \nonumber    \eea
We have \bea  \det(S\ga_1(T)-I_{2n})/\det(S\ga_0(T)-I_{2n})= \frac{\det(c_1)}{\det(c_0)}       \frac{\det(b_1)}{\det(b_0)}. \nonumber\eea
Let $Z_+=diag(I_n,0_n)$ ($Z_-=diag(0_n,I_n)$) be the frame of $V^+(N)$($V^-(N)$), then   a frame $\hat{Z}_\pm$ of $V^\pm(SN)$ can be given by $R_{\theta/2}Z_\pm$.
We have \bea \det(\ga_\lambda(T/2)\hat{Z}_+, Z_+  )=\det(\hat{M}_\lambda Z_+,Z_+  )=(-1)^n\det(c_\lambda), \nonumber  \eea
and  \bea \det(\ga_1(T/2)\hat{Z}_+, Z_+  )/ \det(\ga_0(T/2)\hat{Z}_+, Z_+  )=\det(c_1)/\det(c_0).\nonumber \eea
Similar, we have
\bea \det(\ga_\lambda(T/2)\hat{Z}_-, Z_-  )=\det(\hat{M}_\lambda Z_-,Z_-  )=\det(b_\lambda), \nonumber  \eea  and  \bea \det(\ga_1(T/2)\hat{Z}_-, Z_-  )/ \det(\ga_0(T/2)\hat{Z}_-, Z_-  )=\det(b_1)/\det(b_0).\nonumber \eea
We get the decomposition formula
\bea    \det(S\ga_1(T)-I_{2n})/\det(S\ga_0(T)-I_{2n})=\frac{\det(\ga_1(T/2)\hat{Z}_+, Z_+ )}{\det(\ga_0(T/2)\hat{Z}_+, Z_+ )}
\cdot  \frac{\det(\ga_1(T/2)\hat{Z}_-, Z_-  )}{\det(\ga_0(T/2)\hat{Z}_-, Z_-  )     }.        \nonumber        \eea

\section{Relation with the relative Morse index and stability criteria  }

In this section, we will give the relation of conditional Fredholm determinant and relative Morse index,  moreover we give some new stability criteria for the symmetry periodic orbits.

A simple way to understand the relative Morse index $I(A-B, A-B-D)$   is from the viewpoint of spectral flow.
For reader's convenience, we first give a brief review of the spectral flow.
The spectral flow was introduced by Atiyah, Patodi and
Singer \cite{APS} in their study of index theory on manifolds with
boundary. Let $\{A(\theta),\theta\in[0,1]\}$ be a continuous path of
self-adjoint Fredholm operators on a Hilbert space $\mathcal{H}$. Roughly
speaking, the spectral flow of path $\{A(\theta),\theta\in[0,1]\}$
counts the net change in the number of negative eigenvalues of
$A(\theta)$ as $\theta$ goes from $0$ to $1$, where the enumeration
follows from the rule that each negative eigenvalue crossing to the
positive axis contributes $+1$ and each positive eigenvalue crossing
to the negative axis contributes $-1$, and for each crossing, the
multiplicity of eigenvalue is counted.

We come back to the Hamiltonian systems, suppose $B(s,t)\in C([0,1]\times[0,T], \mathcal{S}(2n))$. For $s\in[0,1]$,  let $B_s\in C([0,T], \mathcal{S}(2n))$.  For such two operators $A-B_0$ and $A-B_1$, we can define the relative Morse index via spectral flow.
In fact, by \cite{HS}, we have,
\begin{eqnarray*}
\cI(A-B_0,A-B_1)=-Sf(\{A-B(s), s\in[0,1]\}). \label{5a.8}
\end{eqnarray*}
We list some fundamental property of the relative Morse  index, detail could found in \cite{HOW}
\\
(I) For $B_0$, $B_1$, $B_2$, then \begin{eqnarray*}
\cI(A-B_0,A-B_1)+\cI(A-B_1,A-B_2)=\cI(A-B_0,A-B_2). \label{a5.10}  \end{eqnarray*}
\\(II)Let $D=B_1-B_0$, and we can simply let $B(s)=B_0+sD$.  Let $\kappa=\{s_0\in[0,1], \ker(A-B(s_0))\neq0\}$,
\begin{eqnarray*}\cI(A-B_0, A-B_1)\leq \sum_{s_0\in\kappa}  \upsilon(A-B(s_0)).  \label{5a.12}  \end{eqnarray*}
\\ (III)
if $D>0$, then $I(A-B,A-B-D)\geq0$. By careful analysis\cite{HS},   the crossing form
\begin{eqnarray} \cI(A-B, A-B-D)=\sum_{s_0\in\kappa\cap[0,1)} \upsilon(A-B(s_0)).  \label{5a.14}\nonumber  \end{eqnarray}
Similarly
\begin{eqnarray} \cI(A-B, A-B+D)=-\sum_{s_0\in\kappa\cap(0,1]} \upsilon(A-B(s_0)).  \label{5a.16} \nonumber \end{eqnarray}
\\(IV) Suppose $D_1\leq D\leq D_2$, then
\begin{eqnarray}\cI(A-B,A-B-D_1)\leq \cI(A-B,A-B-D)\leq \cI(A-B,A-B-D_2).\label{indexn}\nonumber
\end{eqnarray}

Similar with the $S$-periodic boundary conditions \cite{HW}, we have the next theorem
\begin{thm}\label{hillevenodd} Assume $A-B$ and $A-B-D$ are non-degenerate, then  $\det(I-\cF(B,D))>0 $($<0$) if and only if $\cI(A-B,A-B-D)$ is even (odd).
\end{thm}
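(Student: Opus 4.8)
The plan is to connect the sign of the conditional Fredholm determinant $\det(I-\cF(B,D))$ to the parity of the relative Morse index $\cI(A-B,A-B-D)$ by tracking both quantities along the linear path $B(s)=B+sD$, $s\in[0,1]$, and comparing how each behaves as $s$ crosses a degeneracy. Recall from Theorem \ref{hill} that $\det(I-\alpha\cF(B,D))=f(\alpha)$ is a real entire function of $\alpha$ with $f(0)=1$, and by the discussion following \eqref{eq1.5a} it is entire; its real zeros are exactly the reciprocals $\lambda_j^{-1}$ of the (real) eigenvalues $\lambda_j$ of the problem, with algebraic multiplicity matching the order of the zero (as in \cite[Theorem 3.5]{HW}, invoked in the proof of Corollary \ref{cor3.3.1}). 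The value we actually want is $f(1)=\det(I-\cF(B,D))$, so the idea is to count, with multiplicity, the number of real zeros of $f$ in the open interval $(0,1)$: each simple real zero crossed changes the sign of $f$, and more generally a zero of order $k$ multiplies $f$ locally by $(\alpha-\alpha_0)^k$, so the sign change across it is $(-1)^k$.

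The key translation is that a zero of $f$ at $\alpha=s_0\in(0,1)$ of order $k$ corresponds precisely to $A-B-s_0 D$ being degenerate with $k=\upsilon(A-B(s_0))=\dim\ker(A-B(s_0))$, since $\alpha=s_0$ is a zero of $f(\alpha)=\det(I-\alpha\cF(B,D))$ iff $s_0^{-1}$ is an eigenvalue of $\cF(B,D)$, iff $1$ is an eigenvalue of $s_0\cF(B,D)=\cF(B,s_0 D)$, iff $\ker(A-B-s_0D)\neq 0$. Thus as $\alpha$ runs from $0$ to $1$,
\begin{eqnarray}
\operatorname{sgn} f(1)=\prod_{s_0\in\kappa\cap(0,1)}(-1)^{\upsilon(A-B(s_0))}=(-1)^{\sum_{s_0\in\kappa\cap(0,1)}\upsilon(A-B(s_0))},\nonumber
\end{eqnarray}
using $f(0)=1>0$ and the non-degeneracy of $A-B$ and $A-B-D$ (so that $s_0=0,1$ are not zeros). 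On the other hand, by property (III) of the relative Morse index together with the spectral-flow description in \eqref{5a.8}, the crossing form gives $\cI(A-B,A-B-D)=\sum_{s_0\in\kappa\cap[0,1)}\upsilon(A-B(s_0))$, and since $A-B$ is non-degenerate the contribution at $s_0=0$ vanishes, leaving exactly the same sum $\sum_{s_0\in\kappa\cap(0,1)}\upsilon(A-B(s_0))$. Comparing the two displays yields $\operatorname{sgn}\det(I-\cF(B,D))=(-1)^{\cI(A-B,A-B-D)}$, which is the assertion.

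The step I expect to be the main obstacle is justifying rigorously that the sign change of $f$ across a real zero of order $k$ is exactly $(-1)^k$, i.e. that $f$ has no hidden behaviour (such as a zero it touches without crossing of even order being miscounted, or complex zeros contributing). Because $f$ is real on the real axis, a real zero of order $k$ contributes a local factor with a definite sign $(-1)^k$ to $\operatorname{sgn} f$, and the non-real zeros come in conjugate pairs so they do not affect the sign of the real value $f(\alpha)$ for real $\alpha$; the careful point is to argue that all the relevant zeros in $(0,1)$ are accounted for by degeneracies and that the algebraic multiplicity from \cite[Theorem 3.5]{HW} is the right count to match $\upsilon(A-B(s_0))$. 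Once the multiplicity bookkeeping is aligned between the zero-order of $f$ and the dimension of the kernel—which is where the spectral-flow crossing analysis of \cite{HS} does the real work—the parity comparison is immediate and the theorem follows. This mirrors the argument already carried out for $S$-periodic boundary conditions in \cite{HW}, so I would follow that template closely.
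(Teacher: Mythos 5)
Your strategy --- reading off the sign of $f(1)$ from the parity of the total order of the real zeros of $f(\alpha)=\det(I-\alpha\cF(B,D))$ in $(0,1)$ --- is a genuinely different route from the paper, which instead writes $\det(I-\cF(B,D))=\det\big((A-B-D)(A+P_0)^{-1}\big)\det\big((A-B)(A+P_0)^{-1}\big)^{-1}$ and identifies the sign of each factor with $(-1)^{\cI(A,A-B)}$, resp.\ $(-1)^{\cI(A,A-B-D)}$, via the finite-dimensional Galerkin truncations $A|_{V_N}-P_NBP_N$ and the argument of \cite[Theorem 6.2]{HW}. As written, however, your argument has a genuine gap at the crossing analysis, in two places. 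First, by \cite[Theorem 3.5]{HW} the order of the zero of $f$ at $\alpha=s_0$ is the \emph{algebraic} multiplicity of $s_0^{-1}$ as an eigenvalue of the non-self-adjoint operator $\cF(B,D)=D(A-B)^{-1}$; this equals $\upsilon(A-B-s_0D)=\dim\ker(A-B-s_0D)$ (the geometric multiplicity) only in special cases, e.g.\ when $D$ is sign-definite so that $\cF$ is similar to a self-adjoint operator built from $|D|^{1/2}(A-B)^{-1}|D|^{1/2}$. Second, the identity $\cI(A-B,A-B-D)=\sum_{s_0\in\kappa\cap[0,1)}\upsilon(A-B(s_0))$ you invoke is property (III), which is stated (and true) only for $D>0$ (resp.\ $D<0$); for a general symmetric $D$ the contribution of a crossing to the spectral flow is the signature of the crossing form $\langle D\cdot,\cdot\rangle$ restricted to $\ker(A-B(s_0))$, which can be strictly smaller in absolute value than $\upsilon(A-B(s_0))$, and property (II) gives only an inequality. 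The theorem is asserted for arbitrary symmetric $D$ with non-degenerate endpoints, so both identifications fail in general.

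What remains true --- and is what you actually need --- is that the parity of the order of the zero of $f$ at $s_0$ coincides with the parity of the local spectral-flow contribution at $s_0$; but establishing that is essentially the content of the theorem and is not supplied by chaining together the two (individually false) identities above. To salvage your route you must either (i) restrict to definite $D$ and justify the equality of algebraic and geometric multiplicities by the similarity argument, or (ii) follow the paper and compare the sign of $\det\big((A-B-sD)(A+P_0)^{-1}\big)$ with the number of negative eigenvalues of the finite-dimensional truncations, which handles indefinite $D$ and non-regular crossings uniformly and then concludes by the multiplicativity of the conditional determinant. Your preliminary observations (realness of $f$ on the real axis, conjugate pairing of the non-real zeros, and the bookkeeping $\operatorname{sgn}f(1)=(-1)^{\sum_i k_i}$) are correct and reusable.
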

\begin{proof}
Let $P_N$ be the orthogonal projections onto $V_N=\oplus_{|\nu|\leq N} \ker (A-\nu)$. Then
$$
\det((A-B)(A+P_{0})^{-1})=\lim\limits_{N\to\infty} \det((A|_{V_{n}}-P_{n}BP_{n})(A|_{V_{n}}+P_{0})^{-1}).
$$
and the sign of $\det((A-B)(A+P_{0})^{-1})$ is same as the sign of $\det((A|_{V_{n}}-P_{n}BP_{n})(A|_{V_{n}}+P_{0})^{-1})$ for $N$ large enough. Now by the same argument as \cite[Theorem 6.2]{HW}, we have $\det((A|_{V_{n}}-P_{n}BP_{n})(A|_{V_{n}}+P_{0})^{-1})$ is positive (negative) if and only if the difference of total multiplicity of the negative eigenvalues of $A|_{V_N}-P_N BP_{N}$ and $A|_{V_N}$ is even(odd) for $N$ is large enough.  By the continuousness of the relative Morse index, this is equivalent to that $\cI(A, A-B)$ is even(odd), hence $sign\det((A-B)(A+P_{0})^{-1})=(-1)^{\cI(A, A-B)}$. By the same reason , for $A-B-D$, we also have $sign\det((A-B-D)(A+P_{0})^{-1})=(-1)^{\cI(A, A-B-D)}$. Since $\det(I-\cF(B,D))det((A-B)(A+P_{0})^{-1})=det((A-B-D)(A+P_{0})^{-1})$, we get
$$
sign\det(I-\cF(B,D))=(-1)^{\cI(A,A-B-D)}\cdot(-1)^{-\cI(A,A-B)}=(-1)^{\cI(A-B, A-B-D)},
$$
it's easy to see that $\det(I-\cF(B,D))$ is positive (negative) if and only if $\cI(A-B,A-B-D)$ is even(odd).
\end{proof}

In \cite{HOW}, we had use   trace of $\cF^k(B,D)$ to  nontrivial estimation of  relative Morse index. Although in \cite{HOW}, we deal with the operators with of $S$-periodic case, it is totally same for the Lagrangian boundary conditions. The following theorem is from
\cite{HOW}

\begin{thm}\label{thm5.2} Suppose $A-B$ is non-degenerate. Suppose that there are $D_1,D_2\in \mathcal{B}(2n)$ such that $D_1< D < D_2$, with $D_1<0$, $D_2>0$, if there exists  $k\in 2\mathbb N$, such that  $Tr  \cF^k(B+\nu J,D_j)<1$
for $j=1,2$, then $A-B-D$ is non-degenerate, and moreover $\cI(A-B,A-B-D)=0$.
\end{thm}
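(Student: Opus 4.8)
The plan is to sandwich the indefinite perturbation $D$ between the two definite perturbations $D_1<0<D_2$ and to transfer the control coming from the trace bound on the comparison problems to the problem for $D$ itself. The feature that makes the definite problems tractable is that the associated system eigenvalues are real. For $\nu=0$ this is transparent: $\cF(B,D_2)=D_2(A-B)^{-1}$ is conjugate by $D_2^{1/2}$ to the self-adjoint operator $D_2^{1/2}(A-B)^{-1}D_2^{1/2}$, so every eigenvalue $\lambda_j$ of the $D_2$-system $\dot z=J(B+\lambda D_2)z$ is real, and likewise for $D_1<0$; the shift $B\mapsto B+\nu J$ is the technical device of \cite{HOW}, to which the same analysis applies. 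Combined with the trace formula $Tr\,\cF^m=\sum_j\lambda_j^{-m}$ for $m\ge2$, reality gives that for even $k$ each summand $\lambda_j^{-k}$ is positive, so that there can be no cancellation in $Tr\,\cF^k(B+\nu J,D_j)$.

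The core step is to show the families $s\mapsto A-B-sD_j$ have no degeneracy on $[0,1]$. A degeneracy at $s_0\in(0,1]$ means precisely that $\lambda=s_0$ is a (necessarily real) eigenvalue of the $D_j$-system, hence that $s_0\in(0,1]$ contributes $s_0^{-k}\ge1$ to $Tr\,\cF^k(B+\nu J,D_j)$. Since every summand is positive, the hypothesis $Tr\,\cF^k(B+\nu J,D_j)<1$ forces the absence of any such eigenvalue in $(0,1]$; the value $s_0=0$ is excluded because $A-B$ is non-degenerate. Thus $A-B-sD_j$ is non-degenerate for every $s\in[0,1]$ and $j=1,2$, and the crossing-form description of property (III) then gives $\cI(A-B,A-B-D_j)=0$.

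With the endpoints controlled the index statement is immediate: from $sD_1\le sD\le sD_2$, property (IV) yields $0=\cI(A-B,A-B-D_1)\le\cI(A-B,A-B-D)\le\cI(A-B,A-B-D_2)=0$, so $\cI(A-B,A-B-D)=0$. For the non-degeneracy of $A-B-D$ I would argue by eigenvalue comparison rather than from the index alone, since the index can remain constant across a non-transversal crossing. From $sD_1\le sD\le sD_2$ we get the operator inequalities $A-B-sD_2\le A-B-sD\le A-B-sD_1$, so by min-max the eigenvalue branches of $A-B-sD$, tracked continuously from $s=0$ where all three families coincide with $A-B$, are trapped between the corresponding branches of the two definite families. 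A branch that is positive at $s=0$ stays above the branch of $A-B-sD_2$, which never reaches $0$ and hence remains positive; a branch that is negative at $s=0$ stays below the branch of $A-B-sD_1$, which remains negative. Either way no branch of $A-B-sD$ can vanish on $[0,1]$, so in particular $A-B-D$ is non-degenerate.

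The main obstacle is the transfer from the definite comparison operators to the indefinite $D$. Two points need care. First, the reality and no-cancellation step is exactly where the definiteness of $D_1,D_2$ is essential: without it $\cF$ may carry complex eigenvalues and $Tr\,\cF^k$ could stay below $1$ despite genuine real crossings, so the sign of $D_j$ (and, with the shift, the treatment of $B+\nu J$ as in \cite{HOW}) cannot be dispensed with. Second, the eigenvalue-branch comparison must be justified for the operators $A-B-sD$, which are self-adjoint with compact resolvent but not semi-bounded; I would make this rigorous by labelling eigenvalues continuously from the reference $A-B$ and invoking monotonicity of eigenvalues under the positive perturbations $s(D_2-D)\ge0$ and $s(D-D_1)\ge0$. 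This is the step I expect to have to write out most carefully.
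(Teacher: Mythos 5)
The paper does not actually prove Theorem \ref{thm5.2}; it imports it verbatim from \cite{HOW}. Your argument for the index statement is the same as the one used there: for the definite comparison matrices $D_1<0<D_2$ the operator $\cF(B,D_j)$ is conjugate to a compact self-adjoint operator, so the system eigenvalues $\lambda_i$ are real; for even $k$ every term of $Tr\,\cF^k=\sum_i\lambda_i^{-k}$ is positive, a degeneracy of $A-B-s_0D_j$ at $s_0\in(0,1]$ would contribute $s_0^{-k}\ge 1$, so the hypothesis excludes all crossings on $(0,1]$; property (III) then gives $\cI(A-B,A-B-D_j)=0$, and property (IV) squeezes $\cI(A-B,A-B-D)=0$. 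That part is correct.

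The gap is in your non-degeneracy step. The principle you invoke --- that the eigenvalue branches of $A-B-sD$, tracked continuously from $s=0$, stay trapped between the correspondingly tracked branches of $A-B-sD_1$ and $A-B-sD_2$ --- is false. Operator monotonicity controls \emph{ordered} eigenvalues via min--max, and no such ordering exists here since the operators are unbounded in both directions; continuously (or analytically) labelled branches do not inherit the order, because an avoided crossing in one family reshuffles which branch is which. Already for $2\times 2$ symmetric matrices this fails: take $M_0=diag(1/2,1)$, $D=diag(0,10)$ and $D_2=D+aE$ with $E$ the matrix of ones and $a>0$ small, so $M_0-sD\ge M_0-sD_2$; the branch of $M_0-sD$ issuing from the eigenvalue $1$ is $1-10s$ and hits $0$ at $s=1/10$, while the branch of $M_0-sD_2$ issuing from $1$ is the globally upper eigenvalue and stays above $1/2-sa>0$. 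So the branch of the \emph{larger} family drops below that of the smaller one, and ``no branch of $A-B-sD$ can vanish'' does not follow from your comparison. The repair uses only the listed properties and avoids branch tracking altogether: since $D_2-D>0$, the crossing-form identity in (III) applied to the path $s\mapsto A-B-D-s(D_2-D)$ gives $\cI(A-B-D,A-B-D_2)\ge \upsilon(A-B-D)$ (the $s_0=0$ term), while additivity (I) gives $\cI(A-B-D,A-B-D_2)=\cI(A-B,A-B-D_2)-\cI(A-B,A-B-D)=0-0=0$. Hence $\upsilon(A-B-D)=0$, i.e.\ $A-B-D$ is non-degenerate. Your instinct that a non-transversal crossing must be ruled out was right; the crossing form evaluated at $A-B-D$ itself is exactly the tool that does it.
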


 Connected with the trace formula (\ref{tracef}),   We can give a estimation of relative Morse index by the trace of matrices.   As a corollary of Theorem \ref{thm5.2} ,  We have
\begin{cor}
Suppose $A-B$ is non-degenerate. Suppose that there are $D\geq0$,  $Tr(G_1^2)-2Tr(G_2)<1$,  then $A-B-D$ is non-degenerate, and  $\cI(A-B,A-B-D)=0$. Similar for the case $D<0$. \end{cor}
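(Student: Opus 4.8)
The plan is to reduce this corollary directly to Theorem~\ref{thm5.2} by choosing an appropriate pair $D_1 < D < D_2$ and verifying the trace hypothesis $Tr\,\cF^k(B+\nu J, D_j) < 1$ for the even exponent $k=2$. Since we are given $D \geq 0$, the natural choice is to take $D_1$ slightly negative and $D_2$ slightly larger than $D$, so that $D_1 < 0 < D \leq D_2$ with strict inequalities $D_1 < D < D_2$. The idea is that the hypothesis $Tr(G_1^2) - 2Tr(G_2) < 1$ is, by the trace formula \eqref{tracef}, exactly the statement that $Tr\,\cF^2(B,D) < 1$; so for $D_2$ chosen close enough to $D$ (and $D_1$ close enough to $0$) the continuity of $D \mapsto Tr\,\cF^2(B+\nu J, D)$ will preserve the strict inequality $< 1$, allowing Theorem~\ref{thm5.2} to apply with $k=2$.

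First I would recall from \eqref{tracef} that $Tr(\cF^2) = Tr(G_1^2) - 2Tr(G_2)$, where $\cF = \cF(B,D)$, so the given hypothesis reads $Tr\,\cF^2(B,D) < 1$. Next I would invoke the continuity of the map $(\nu, \tilde D) \mapsto Tr\,\cF^2(B+\nu J, \tilde D)$ in its arguments; this follows because for $k \geq 2$ the operator $\cF(B,D)$ is trace class (as noted at the end of Subsection~3.2) and depends continuously, in the appropriate operator norm, on the perturbations $B$ and $D$ through the fundamental solution $\gamma$. Using this continuity, choose $D_2 = D + \varepsilon E$ for a fixed positive $E \in \mathcal{B}(2n)$ and $\varepsilon > 0$ small, and $D_1 = -\delta E$ for $\delta > 0$ small; then $D_1 < 0 < D < D_2$, and for $\varepsilon, \delta$ small enough the quantities $Tr\,\cF^2(B+\nu J, D_1)$ and $Tr\,\cF^2(B+\nu J, D_2)$ remain below $1$ (for $D_1$ one uses additionally that as $\delta \to 0$ this trace tends to $Tr\,\cF^2(B+\nu J, 0) = 0$).

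With these choices in hand, Theorem~\ref{thm5.2} applies directly with $k=2 \in 2\mathbb{N}$, yielding that $A - B - D$ is non-degenerate and $\cI(A-B, A-B-D) = 0$, which is the assertion of the corollary. The case $D < 0$ is symmetric: one takes $D_2$ slightly positive and $D_1$ slightly below $D$, with the same continuity argument giving the trace bound, and applies Theorem~\ref{thm5.2} again.

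The main obstacle I anticipate is the careful handling of the $\nu J$ shift appearing in Theorem~\ref{thm5.2}: the theorem requires the trace bound for $B + \nu J$ rather than for $B$ itself, whereas the hypothesis $Tr(G_1^2) - 2Tr(G_2) < 1$ is phrased in terms of $\cF(B,D)$ with the $G_j$ built from $B$ alone. One must verify either that the relevant $\nu$ can be taken to be $0$ (so that the shift is vacuous), or that the trace bound is stable under the small imaginary/symplectic shift $\nu J$; establishing this stability, and pinning down precisely which $\nu$ the invocation of Theorem~\ref{thm5.2} demands, is the one genuinely nonroutine point, since everything else is a direct translation via \eqref{tracef} plus a standard continuity-perturbation argument.
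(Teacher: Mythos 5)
Your argument is correct and follows the paper's own (unwritten) route: the paper presents this corollary as an immediate consequence of Theorem \ref{thm5.2} together with the identity $Tr(\cF^2)=Tr(G_1^2)-2Tr(G_2)$ from \eqref{tracef}, and your choice $D_1=-\delta E<0\leq D<D+\varepsilon E=D_2$ combined with trace-norm continuity of $D\mapsto \cF^2(B,D)$ is precisely the bridge needed to convert the hypothesis on $D$ itself into the strict two-sided hypothesis of that theorem. Your worry about the $\nu J$ shift is reasonable but harmless: the corollary's hypothesis is the $\nu=0$ instance, and that is the intended reading here (the $\nu J$ is inherited from the $S$-periodic setting of \cite{HOW}, where it encodes the twist in the boundary condition and plays no role for a fixed Lagrangian boundary problem).
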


Suppose $x(t)$ is a $T$-periodic solution with the fundamental solutions $\ga(t)$.  $x$ is called (spectral) stable  if $\sigma(\ga(T))\in \mathbb U$, is called hyperbolic if  $\sigma(\ga(T))\cap \mathbb U=\emptyset$.
To estimate the stability, we use the Maslov-type index $i_\omega(\gamma)$, which is essentially same as  the relative Morse index \cite{Lon4}. Roughly speaking, for a continuous path $\gamma(t)\in\Sp(2n)$, $\omega\in \mathbb{U}$, the Maslov-type index $i_\omega(\gamma)$ is defined by the intersection number of $\gamma$ and $\Sp_\omega^0(2n)=\{M\in\Sp(2n)\,|\, \det(M-\omega I_{2n})=0\}$.  Details could be found in \cite{Lon2},\cite{Lon4}, some brief review could be found in \cite{HS1}.
From \cite[Theorem 2.5 and Lemma 4.5]{HS}, we have the following proposition.
\begin{prop}\label{prop5a.2}
For $S=\pm I$, we have
\begin{eqnarray*} \cI(A|_{E_S}, A|_{E_S}-B)=\left\{\begin{array}{ll} i_1(\gamma)+n, & \quad
           {\mathrm if}\; S=I_{2n},  \\
           \\
 i_{-1}(\gamma) & \quad {\mathrm if}\; S=-I_{2n} .\end{array}\right.  \label{5a.18}  \end{eqnarray*}

\end{prop}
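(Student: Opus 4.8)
The plan is to pass to the spectral-flow description of the relative Morse index and then to recognize that flow as the Maslov-type index. Taking $B(s)=sB$ in the spectral-flow formula recalled above gives $\cI(A|_{E_S},A|_{E_S}-B)=-Sf(\{A|_{E_S}-sB:s\in[0,1]\})$. A parameter value $s_0$ is a crossing of this path exactly when $\ker(A|_{E_S}-s_0B)\neq\{0\}$, i.e.\ when the system $\dot z=J(s_0B)z$ has a nonzero solution with $z(0)=Sz(T)$. Writing $z(T)=\gamma_{s_0}(T)z(0)$ this is equivalent to $z(0)\in\ker(S\gamma_{s_0}(T)-I_{2n})$, and the kernel of the operator is isomorphic to this eigenspace. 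Hence for $S=I_{2n}$ the crossing condition is $\det(\gamma_{s_0}(T)-I)=0$ and for $S=-I_{2n}$ it is $\det(\gamma_{s_0}(T)+I)=0$; in both cases $\gamma_{s_0}(T)\in\Sp^0_\omega(2n)$ with $\omega=1$, respectively $\omega=-1$. This is the bridge forcing the choice $\omega=\pm1$ when $S=\pm I$.

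Next I would identify the two indices through a common Maslov index. The cleanest route is the \emph{spectral flow equals Maslov index} principle: the flow of $\{A|_{E_S}-sB\}$ equals the Maslov index of the path of Lagrangian subspaces $s\mapsto\mathrm{Gr}(\gamma_s(T))$ relative to the fixed Lagrangian $\mathrm{Gr}(S)$ in the doubled phase space $\mathbb{R}^{2n}\times\mathbb{R}^{2n}$. To make the crossing forms explicit I would use $\frac{d}{ds}\gamma_s(T)=\gamma_s(T)\int_0^T\gamma_s^{-1}(t)JB(t)\gamma_s(t)\,dt$, so that the operator crossing form $u\mapsto-\int_0^T\langle B(t)\gamma_{s_0}(t)u,\gamma_{s_0}(t)u\rangle\,dt$ on $\ker(S\gamma_{s_0}(T)-I)$ becomes the standard symplectic crossing form. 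After a generic perturbation of $B$ all crossings are regular, and by homotopy invariance and catenation of both indices one reduces to normal-form pieces; summing the matched crossing forms identifies $-Sf$ with $i_\omega(\gamma)$ up to the contributions of the endpoints $s=0,1$. This is exactly the Morse-index theorem for the $\omega$-boundary problem.

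It remains to fix the additive constant, and this is where the distinction between $S=I$ and $S=-I$ lives. For $S=I_{2n}$ the initial operator $A|_{E_I}=-J\frac{d}{dt}$ with periodic domain is degenerate and, equivalently, the symplectic path starts on the singular stratum since $\det(\gamma(0)-\omega I)=\det(I-I)=0$ at $\omega=1$; the endpoint convention for $i_1$ then contributes the shift $+n$. For $S=-I_{2n}$ the initial operator is non-degenerate because $\det(\gamma(0)+I)=\det(2I)\neq0$, so no shift arises and one gets plainly $i_{-1}(\gamma)$. I would pin the constant down by testing the base case $B=0$, where $\cI(A|_{E_S},A|_{E_S})=0$ while $\gamma\equiv I_{2n}$, which fixes the normalization of $i_\omega$ consistently with the previous step.

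The step I expect to be the main obstacle is precisely this last one: getting every convention consistent, namely the sign of the spectral flow, the orientation built into $i_\omega$, and the treatment of the non-generic endpoint $s=0$ (a half/boundary crossing), so that the correction comes out as exactly $+n$ for $\omega=1$ and $0$ for $\omega=-1$. These are the delicate points isolated in \cite[Theorem 2.5 and Lemma 4.5]{HS}.
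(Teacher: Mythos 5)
The paper does not actually prove this proposition: it is quoted directly from \cite[Theorem 2.5 and Lemma 4.5]{HS}, and your spectral-flow-to-Maslov-index argument (crossings of $\{A|_{E_S}-sB\}$ identified with degeneracies $\det(S\gamma_{s_0}(T)-I)=0$, matched crossing forms, homotopy invariance and catenation, then a base-case normalization) is precisely the route taken in that reference. Your outline is correct, and the one genuinely delicate point you isolate --- the endpoint convention producing the shift $+n$ for $\omega=1$, where the operator path starts at the degenerate $A|_{E_I}$ and the symplectic path starts on $\Sp_1^0(2n)$, versus no shift for $\omega=-1$ --- is exactly the content of the cited results, so your sketch is consistent with (and no less complete than) what the paper provides.
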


Let $e(M)$ be the total number of eigenvalues of $M$ on $\mathbb U$, a simple but useful stability criteria is following
\begin{eqnarray} e(\ga(T))/2\emph{}\geq
 |i_{-1}(\gamma)-i_1(\gamma)|.
 \label{5a.26} \end{eqnarray}
All the above results, for that the relative Morse index equals to Maslov-type index and for the stability criteria,   could be proved
for any $S$ boundary condition with $S\in\Sp(2n)\cap O(2n)$, and details could be found in \cite{HS}, \cite{HOW}.

Consider the linear system
\begin{eqnarray} \dot{z}(t)= JB_1(t)z(t),  z(0)=z(T),\label{5.b1}
\end{eqnarray}
where $B_1=B+D$.
We assume (\ref{5.b1}) satisfied the brake symmetry condition with respect to $N$ as given in \S 4.
From \cite{HW}, for $S=\pm I$, we have decomposition of  the relative Morse index
\bea \cI(A|_{E_S}-B,A|_{E_S}-B-D)=I(A|_{E^+_S}-B, A|_{E^+_S}-B-D) +I(A|_{E^-_S}-B, A|_{E^-_S}-B-D).\lb{dindex}\nonumber\eea
We have
\begin{lem}\label{thm5.3} Assume $B$ and $D$ satisfy brake symmetry with respect to $N$. Suppose $|i_1(\ga_{0})-i_{-1}(\ga_0)|=n$, and  $\cI(A|_E-B, A|_E-B-D)=0$, for $E=E^\pm_S$,  $S=\pm I$, then $\ga_1(T)$ is stable. \end{lem}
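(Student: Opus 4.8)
The plan is to bootstrap the equalities $i_{\pm 1}(\gamma_1) = i_{\pm 1}(\gamma_0)$ of Maslov-type indices from the vanishing relative Morse indices, and then invoke the stability criterion \eqref{5a.26}. First I would record that $\gamma_1$ is the fundamental solution of $B_1 = B + D$, so $A|_{E_S} - B_1 = A|_{E_S} - B - D$; moreover the hypotheses make each of $A|_{E^\pm_S} - B$ and $A|_{E^\pm_S} - B - D$ non-degenerate, hence by the direct sum decomposition from \S 4 also $A|_{E_S} - B$ and $A|_{E_S} - B_1$ are non-degenerate, so every relative Morse index appearing below is well defined.

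The core step is to apply, for each $S = \pm I$, the additivity property (I) of the relative Morse index:
\begin{eqnarray*}
\cI(A|_{E_S}, A|_{E_S} - B_1) = \cI(A|_{E_S}, A|_{E_S} - B) + \cI(A|_{E_S} - B, A|_{E_S} - B - D).
\end{eqnarray*}
By the brake-symmetry decomposition of the relative Morse index stated just above the lemma, the last term splits as $I(A|_{E^+_S} - B, A|_{E^+_S} - B - D) + I(A|_{E^-_S} - B, A|_{E^-_S} - B - D)$, and both summands vanish by hypothesis. Hence $\cI(A|_{E_S}, A|_{E_S} - B_1) = \cI(A|_{E_S}, A|_{E_S} - B)$ for $S = I$ and for $S = -I$.

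Then I would translate these two identities through Proposition \ref{prop5a.2}, applied both to $B$ (path $\gamma_0$) and to $B_1$ (path $\gamma_1$). For $S = I$ it reads $i_1(\gamma_1) + n = i_1(\gamma_0) + n$, giving $i_1(\gamma_1) = i_1(\gamma_0)$; for $S = -I$ it gives $i_{-1}(\gamma_1) = i_{-1}(\gamma_0)$. Combined with the assumption $|i_1(\gamma_0) - i_{-1}(\gamma_0)| = n$, this yields $|i_{-1}(\gamma_1) - i_1(\gamma_1)| = n$. Finally the criterion \eqref{5a.26} gives $e(\gamma_1(T))/2 \geq |i_{-1}(\gamma_1) - i_1(\gamma_1)| = n$, so $e(\gamma_1(T)) \geq 2n$; since $\gamma_1(T) \in \Sp(2n)$ has exactly $2n$ eigenvalues, we get $e(\gamma_1(T)) = 2n$, i.e. $\sigma(\gamma_1(T)) \subset \mathbb{U}$, which is the asserted spectral stability.

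The step needing the most care is not any single computation but the coherent matching of conventions: I must ensure Proposition \ref{prop5a.2} is legitimately applied to the degenerate left endpoint $A|_{E_I}$ (the $+n$ correction there is exactly what makes the $S=I$ identity consistent), and that the additivity formula and the brake-symmetry decomposition are used with the same orientation of the spectral flow. Once this bookkeeping is pinned down, the argument is a short chain of cited identities, the only genuinely external ingredient being the stability inequality \eqref{5a.26}.
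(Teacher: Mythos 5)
Your proof is correct and follows essentially the same route as the paper: the paper's one-line argument is precisely that the vanishing of the relative Morse indices on $E^\pm_S$ forces $i_{\pm1}(\gamma_1)=i_{\pm1}(\gamma_0)$ via the brake-symmetry decomposition and Proposition \ref{prop5a.2}, after which the stability criterion \eqref{5a.26} gives $e(\gamma_1(T))=2n$. You have merely written out the bookkeeping (additivity, the decomposition \S 4, the $+n$ shift for $S=I$) that the paper leaves implicit.
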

\begin{proof}  Please note that  $\cI(A|_E-B, A|_E-B-D)=0$, for $E=E^\pm_S$,  $S=\pm I$ implies $i_\pm(\ga_1)=i_\pm(\ga_0)$, then the result from \eqref{5a.26}.
\end{proof}
As a corollary, we have
\begin{cor}\label{cor5.6}  Suppose $|i_{-1}(\ga_{0})-i_{1}(\ga_0)|=n$,   $D^-\leq D\leq D^+$ with $D^+\geq0$, $D^-\leq0$,  and $Tr \cF^2(B,D^\pm,E)<1$ for $E=E^\pm_{\pm I}$, then  $\ga_1(T)$ is stable.
 \end{cor}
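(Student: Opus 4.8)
The goal of Corollary \ref{cor5.6} is to reduce the stability conclusion of Lemma \ref{thm5.3} to a checkable trace condition on finite-dimensional matrices, so the plan is to verify that the hypotheses of Lemma \ref{thm5.3} are satisfied. The one hypothesis already assumed verbatim is $|i_{-1}(\ga_0)-i_1(\ga_0)|=n$. The remaining hypothesis to establish is that $\cI(A|_E-B,A|_E-B-D)=0$ for each of the four spaces $E=E^\pm_{\pm I}$. So the heart of the matter is to produce the vanishing of these four relative Morse indices from the trace bounds $Tr\,\cF^2(B,D^\pm;E)<1$.

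The plan is to invoke Theorem \ref{thm5.2} separately on each space $E\in\{E^+_I,E^-_I,E^+_{-I},E^-_{-I}\}$. For each such $E$ we are given $D^-\le D\le D^+$ with $D^+\ge 0$ and $D^-\le 0$, and the trace hypothesis $Tr\,\cF^2(B,D^\pm;E)<1$. Taking the even integer $k=2$ in Theorem \ref{thm5.2}, and identifying $D_1=D^-$, $D_2=D^+$, the two inequalities $Tr\,\cF^2(B,D^\pm;E)<1$ are exactly the hypotheses ``$Tr\,\cF^k(B+\nu J,D_j)<1$ for $j=1,2$'' of that theorem (here with $\nu=0$, so $B+\nu J=B$). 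Theorem \ref{thm5.2} then yields that $A|_E-B-D$ is non-degenerate and $\cI(A|_E-B,A|_E-B-D)=0$. Running this argument on all four spaces gives $\cI(A|_E-B,A|_E-B-D)=0$ for every $E=E^\pm_{\pm I}$, which is precisely the second hypothesis of Lemma \ref{thm5.3}.

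With $|i_{-1}(\ga_0)-i_1(\ga_0)|=n$ in hand and the four relative Morse indices shown to vanish, I would simply apply Lemma \ref{thm5.3} to conclude that $\ga_1(T)$ is stable. The logical skeleton is thus short: translate the trace hypothesis into the language of Theorem \ref{thm5.2}, apply it four times to get the index vanishing on $E^\pm_{\pm I}$, and feed the result into Lemma \ref{thm5.3}.

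The step I expect to require the most care is checking that the sign conditions $D_1<0$, $D_2>0$ demanded by Theorem \ref{thm5.2} are genuinely compatible with the stated hypotheses $D^+\ge 0$, $D^-\le 0$ (which allow equality), and that the strict inequality $D_1<D<D_2$ is met; if $D^\pm$ only satisfy weak inequalities one must either perturb $D^\pm$ slightly or verify the boundary cases directly, and one should also confirm that the brake-symmetry decomposition of \S4 legitimately lets us treat the four half-interval problems $E^\pm_{\pm I}$ independently with their own operators $\cF(B,D^\pm;E)$. Apart from reconciling these strict-versus-weak inequality conventions, the proof is a direct concatenation of Theorem \ref{thm5.2} and Lemma \ref{thm5.3}.
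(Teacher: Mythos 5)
Your proposal is correct and follows exactly the route the paper intends: the paper states Corollary \ref{cor5.6} without an explicit proof, as an immediate consequence of applying Theorem \ref{thm5.2} (with $k=2$, $\nu=0$, $D_1=D^-$, $D_2=D^+$) on each of the four spaces $E^\pm_{\pm I}$ and then invoking Lemma \ref{thm5.3}. Your observation about the mismatch between the strict inequalities $D_1<D<D_2$, $D_1<0$, $D_2>0$ in Theorem \ref{thm5.2} and the weak ones in the corollary is a fair point of care that the paper glosses over, but it does not change the argument.
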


 From Corollary  \ref{cor5.6} and the trace formula, we can give stability criteria for the brake symmetry orbits. This criteria will applied for elliptic Lagrangian orbits, please see the detail in next section.

\section{Stability of elliptic relative equilibria in planar $3$-body problem}

   In this section, we use the trace formula to study the stability of ERE in planar $3$-body problem.  A brief introduction for the stability of ERE is given in Subsection \ref{sec6.1}. The applications on studying  the stability of  the elliptic  Lagrangian orbits and elliptic Euler orbits are given in Subsection \ref{sec6.2} and Subsection \ref{sec6.3} separately.   

\subsection{Brief introduction to the stability of elliptic relative equilibria}\label{sec6.1}
In 2005, Meyer and Schmidt\cite{MS}  strongly used the structure of the central configuration for
the elliptic  relative equilibria and symplectically decomposed the fundamental
solution of the orbits  into two parts,
one of which corresponding to the Keplerian solution and the other is the essential part
of the dynamics, needed for studying the  stability. For the planner three body problem, the only central configurations is case of  the Lagrangian triple and Euler collinear control configurations, which the corresponding  ERE is called elliptic Lagrangian solutions and Elliptic Euler orbits.  In this case, the essential part can be written in the following form.

Let $e$ is
the eccentricity, $t$ be the truly anomaly and $r_e(t)=(1+e\cos(t))^{-1}$.  In the rotating coordinate system and by using the true anomaly as the variable,
Meyer and Schmidt \cite{MS} gave a very useful form of the essential part
\bea  \mathcal{B}_e(t)=\left( \begin{array}{cccc} I_{2} & -J \\
J & I_{2}-r_e(t) R
\end{array}\right),\quad t\in[0,2\pi], \quad e\in[0,1),\label{msf} \eea where   $R$ can be considered as the regularized Hessian of the central configurations.
Thus the corresponding Sturm-Liouville system  is
  \bea -\ddot{y}-2J_2\dot{y}+r_e(t) R y=0.  \label{st}\nonumber \eea

 Let $\ga_{e}(t)$ be the fundamental solution of \eqref{msf}, that is
$$ \dot{\ga}_e(t)=J \mathcal{B}_e(t)\ga_e(t), \quad \ga_e(0)=I.$$
The ERE
is called spectrally stable (or elliptic) if all the eigenvalues of $\ga_e(2\pi)$ belong to the unit circle $\mathbb U$, and it is called linearly stable if moreover $\ga_e(2\pi)$ is semi-simple. By contrast, the  ERE is called hyperbolic if no eigenvalue of $\ga_e(2\pi)$ locates on $\mathbb U$, and is called  elliptic-hyperbolic if only part of eigenvalues locate on $\mathbb U$.

We assume $R=\alpha I_2+\eta \cN$ for $\alpha,\eta\geq0$ with $\cN=diag(1,-1)$, which
include the case of Lagrangian and Euler orbits.
Obviously,  $NR=RN$.  Denote
 $N=diag(\cN,-\cN)$. Direct computation shows that $$ N\cB_e(T-t)=\cB_e(t)N, \quad e\in[0,1),$$ which means the system admits the brake symmetry.
 We have the decomposition formula \cite{HW}
\bea  i_1(\ga_e)+n&=& \cI(A|_{E^+_1}, A|_{E^+_1}-\cB_e) +\cI(A|_{E^-_1}, A|_{E^-_1}-\cB_e),    \nonumber  \\
i_{-1}(\ga_e)&=& \cI(A|_{E^+_{-1}}, A|_{E^+_{-1}}-\cB_e)+\cI(A|_{E^-_{-1}}, A|_{E^-_{-1}}-\cB_e),\nonumber   \eea
and
\bea
dim\ker(\ga_{e}(2\pi)-1)=dim(V^+(N)\cap\ga_{e}(\pi)V^+
(N))+dim(V^-(N)\cap\ga_{e}(\pi)V^-(N)).\lb{kern1}\nonumber\eea
\bea
dim\ker(\ga_{e}(2\pi)+1)=dim(V^-(N)\cap\ga_{e}(\pi)V^+
(N)+dim(V^+(N)\cap\ga_{e}(\pi)V^-(N)).\lb{kerp1}\nonumber\eea
Let $$D_e:=\cB_e-\cB_0=diag(0_2, e\cos(t) r_e(t) R),$$ and denote \bea D^\pm_e:=(D_e\pm |D_e|)/2, \nonumber\eea
where $|D_e|=(D_e^2)^{\frac{1}{2}}$. Then, we have $D^+_e\geq0$ and
$D^-_e\leq0$.
\begin{prop} For $E=E^\pm_i$, $i=1,2$, if $Tr \cF^2(\cB_0, D_e^\pm; E)<1 $ then $\ga_1$ is $\pm 1$ non-degenerate and
\bea i_1(\ga_1)=i_1(\ga_0), \quad  i_{-1}(\ga_1)=i_{-1}(\ga_0). \nonumber \eea
\end{prop}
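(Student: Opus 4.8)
The plan is to reduce the statement, via the $\mathbb Z_2$ brake-symmetry decomposition, to four applications of Theorem \ref{thm5.2} on the half-interval operators $A|_{E^\pm_S}$, $S=\pm I$. Throughout I keep in mind that $\ga_0$ and $\ga_1$ are the fundamental solutions of $\cB_0$ and of $\cB_1=\cB_0+D_e=\cB_e$, so the two claimed equalities compare the $\pm1$ Maslov-type indices of the circular orbit ($e=0$) with those of the elliptic orbit at eccentricity $e$. Note also that the very definition of $\cF^2(\cB_0,D_e^\pm;E)$ forces $A|_E-\cB_0$ to be non-degenerate on each piece, so this is a standing hypothesis; accordingly I apply Theorem \ref{thm5.2} in its unshifted form ($\nu=0$), which is exactly the quantity appearing in the trace bound.

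First I would combine the additivity property (I) of the relative Morse index with the decomposition formulas for $i_{\pm1}(\ga_e)$ recalled at the beginning of this section. For each Lagrangian piece $E$, property (I) with the triple $0,\cB_0,\cB_e$ gives
$$\cI(A|_E,A|_E-\cB_e)-\cI(A|_E,A|_E-\cB_0)=\cI(A|_E-\cB_0,A|_E-\cB_e).$$
Subtracting the $e=0$ version of the decomposition formula from the $e$ version then yields
$$i_1(\ga_1)-i_1(\ga_0)=\sum_{\pm}\cI(A|_{E^\pm_1}-\cB_0,\,A|_{E^\pm_1}-\cB_e),$$
and the analogous identity for $i_{-1}$ over $E^\pm_{-1}$. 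Hence it suffices to show that each of these four relative Morse indices vanishes and that each $A|_{E^\pm_S}-\cB_e$ is non-degenerate.

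For a fixed piece $E=E^\pm_S$ I would apply Theorem \ref{thm5.2} with $B=\cB_0$, $D=D_e$, $k=2$. Since $D_e^\pm=(D_e\pm|D_e|)/2$ satisfy $D_e^-\leq D_e\leq D_e^+$ with $D_e^+\geq 0\geq D_e^-$, the natural choice is $D_1=D_e^-$, $D_2=D_e^+$, for which the hypothesis $Tr\,\cF^2(\cB_0,D_e^\pm;E)<1$ is precisely the required bound. The one technical point is that these inequalities need not be strict — equality occurs wherever $D_e$ is semidefinite — whereas Theorem \ref{thm5.2} asks for $D_1<D_e<D_2$ and it is this strictness that delivers non-degeneracy of $A|_E-\cB_e$. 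I would remove the difficulty by replacing $D_2$ with $D_e^++\epsilon I_{2n}$ and $D_1$ with $D_e^--\epsilon I_{2n}$ for small $\epsilon>0$: these give strict inequalities with $D_1<0<D_2$, and since $Tr\,\cF^2(\cB_0,\cdot;E)$ depends continuously on the potential, the strict bounds persist for $\epsilon$ small. Theorem \ref{thm5.2} then gives $A|_E-\cB_e$ non-degenerate and $\cI(A|_E-\cB_0,A|_E-\cB_e)=0$.

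Running this on all four pieces at once produces the two index equalities through the displayed sums. For the $\pm1$ non-degeneracy of $\ga_1$, I would invoke the kernel-decomposition formulas for $\dim\ker(\ga_e(2\pi)\mp1)$ together with the correspondence
$$\dim\ker(A|_{E^\pm_S}-\cB_e)=\dim\big(\ga_1(\pi)V^\pm(SN)\cap V^\pm(N)\big);$$
for $S=I$ the two pieces reassemble $\dim\ker(\ga_1(2\pi)-1)$, and for $S=-I$, using $V^\pm(-N)=V^\mp(N)$, they reassemble $\dim\ker(\ga_1(2\pi)+1)$. Vanishing of all four kernels — which is exactly the non-degeneracy just established — therefore forces both $\dim\ker(\ga_1(2\pi)-1)=0$ and $\dim\ker(\ga_1(2\pi)+1)=0$. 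I expect the substantive analytic content to be entirely absorbed into Theorem \ref{thm5.2}, so the main obstacle is bookkeeping: matching the four boundary problems $E^\pm_{\pm1}$ correctly to the two kernel formulas (tracking the $SN$ versus $N$ swap) and dispatching the non-strict boundary case by the continuity perturbation above.
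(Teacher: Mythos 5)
Your argument is correct and is essentially the intended one: the paper states this proposition without an explicit proof, and what you assemble---property (I) of the relative Morse index plus the decomposition formulas for $i_{\pm1}(\ga_e)$ to reduce to the four pieces $E^{\pm}_{\pm1}$, Theorem \ref{thm5.2} with $D_1=D_e^-$, $D_2=D_e^+$, $k=2$ on each piece, and the kernel formulas for $\dim\ker(\ga_e(2\pi)\mp I)$ to convert non-degeneracy of the four boundary-value problems into $\pm1$ non-degeneracy of $\ga_1(2\pi)$---is exactly the machinery the surrounding text sets up for this purpose. Your $\epsilon$-perturbation restoring the strict inequalities $D_1<D_e<D_2$ demanded by Theorem \ref{thm5.2} is a legitimate and worthwhile patch of a point the paper glosses over (the same non-strict use of $D^{\pm}$ occurs, unaddressed, in Corollary \ref{cor5.6}).
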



\subsection{Stability of elliptic Lagrangian orbits}\label{sec6.2}

We will give a new estimation to the left stability region of the Lagrangian orbits. In this case $\alpha=3/2$,
$\eta=\frac{\sqrt{9-\beta}}{2}$, $\beta\in[0,9]$.  Please note that $R$ only depend on $\beta$ and $R_{\beta}>0$ for $\beta\in(0,9]$. We denote $\ga_{\beta,e}$  be the fundamental solutions
 correspondding to $\cB_{\beta,e}$ .
By (55) and (58) in \cite[Lemma 4.1]{HS1}, we obtain
\bea  i_1(\ga_{\beta,e})=0, \qquad \forall \,(\beta,e)\in [0,9]\times [0,1).  \lb{2.31}\nonumber\eea

 \bea i_{-1}(\ga_{\beta,0})=\left\{\begin{array}{ll}2 & \quad
           {\mathrm if}\; \beta\in [0,3/4),  \\
           \\
 0, & \quad {\mathrm if}\; \beta\in[3/4,9] .\end{array}\right.\lb{l3}\nonumber\eea
 Set
\be D_{\beta,e}(t)=\cB_{\beta,e}(t)-\cB_{\beta,0}(t)=e\cos(t) r_e(t)K_\beta ,\nonumber \ee
where $ K_\beta= diag\left(0,0,\frac{3+\sqrt{9-\beta}}{2},\frac{3-\sqrt{9-\beta}}{2}\right)$,   then $ A-\cB_{\beta,e}=A-\cB_{\beta,0}-D_{\beta,e}$.
Let  $\cos^\pm(t)=(\cos(t)\pm |\cos(t)|)/2$, and  denote \bea K^\pm_\beta=\cos^\pm(t)K_\beta, \nonumber \eea
then $$  D^\pm_{\beta,e}=e  r_e(t)K_\beta^\pm.$$
We  denote \bea f^\pm(\beta)=Tr(\cF^2(\cB_{\beta,0},K_{\beta}^-;E_{-1}^\pm)), \lb{fbb} \nonumber\eea which is a positive function.   The following theorem holds true.
\begin{thm}\label{th2.1}
For $\beta\in[0,3/4)$, $\gamma_{\beta,e}(2\pi)$ is spectrally stable if \bea 0\leq e<\min\{  (1+f^\pm(\beta)^{\frac{1}{2}})^{-1} \}. \nonumber\eea
\end{thm}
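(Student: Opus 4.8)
The plan is to reduce the spectral stability of $\gamma_{\beta,e}(2\pi)$ to the nondegeneracy-and-index statement of the preceding Proposition, and then to verify the hypothesis of that Proposition via the trace estimate $Tr\,\cF^2(\cB_{\beta,0},D^-_{\beta,e};E^\pm_{-1})<1$. Concretely, I would first recall that for $\beta\in[0,3/4)$ we have $i_1(\gamma_{\beta,0})=0$ and $i_{-1}(\gamma_{\beta,0})=2=n$, so that $|i_{-1}(\gamma_{\beta,0})-i_1(\gamma_{\beta,0})|=n$. This is exactly the hypothesis appearing in Lemma \ref{thm5.3} and its Corollary \ref{cor5.6}, which say that if in addition $i_{\pm 1}(\gamma_1)=i_{\pm 1}(\gamma_0)$ then $\gamma_1(T)$ is stable by the stability criterion \eqref{5a.26}. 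So the whole task is to control the change of index from $e=0$ to $e>0$.

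The key step is to bound the eccentricity so that no index crossing occurs. Since $D_{\beta,e}=\cB_{\beta,e}-\cB_{\beta,0}$ splits as $D^+_{\beta,e}\geq 0$ and $D^-_{\beta,e}\leq 0$, and since $D^-_{\beta,e}=e\,r_e(t)K^-_\beta$ depends on $e$ only through the scalar factor $e\,r_e(t)$, I would estimate $Tr\,\cF^2(\cB_{\beta,0},D^-_{\beta,e};E^\pm_{-1})$ in terms of $Tr\,\cF^2(\cB_{\beta,0},K^-_\beta;E^\pm_{-1})=f^\pm(\beta)$. Because $\cF$ is linear in its second (density) argument, $\cF^2$ is quadratic, so replacing $K^-_\beta$ by $e\,r_e(t)K^-_\beta=e(1+e\cos t)^{-1}K^-_\beta$ multiplies the relevant quantities by factors controlled by $e\,\sup_t r_e(t)=e/(1-e)$. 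The bound $e\,r_e(t)\leq e/(1-e)$ together with positivity of the integrand gives $Tr\,\cF^2(\cB_{\beta,0},D^-_{\beta,e};E^\pm_{-1})\leq (e/(1-e))^2 f^\pm(\beta)$, and the inequality $(e/(1-e))^2 f^\pm(\beta)<1$ is equivalent to $e<(1+f^\pm(\beta)^{1/2})^{-1}$. Hence the stated bound $0\leq e<\min\{(1+f^\pm(\beta)^{1/2})^{-1}\}$ forces $Tr\,\cF^2(\cB_{\beta,0},D^-_{\beta,e};E^\pm_{-1})<1$ on both subspaces $E^\pm_{-1}$.

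With the trace hypothesis verified, I would invoke the Proposition immediately preceding the theorem: for $E=E^\pm_{-1}$, the condition $Tr\,\cF^2(\cB_{\beta,0},D^\pm_e;E)<1$ yields that $\gamma_1$ is $\pm 1$-nondegenerate with $i_1(\gamma_1)=i_1(\gamma_0)$ and $i_{-1}(\gamma_1)=i_{-1}(\gamma_0)$. Applying this for the $-1$ boundary condition (and noting that for $\beta\in[0,3/4)$ the $+1$ index is already trivially preserved since $i_1\equiv 0$), one concludes $i_{\pm 1}(\gamma_{\beta,e})=i_{\pm 1}(\gamma_{\beta,0})$. Then Lemma \ref{thm5.3}, via \eqref{5a.26} and $|i_{-1}(\gamma_{\beta,0})-i_1(\gamma_{\beta,0})|=n$, forces $e(\gamma_{\beta,e}(2\pi))/2\geq n$, i.e.\ all $2n$ eigenvalues lie on $\mathbb U$, which is spectral stability.

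The main obstacle I expect is the trace monotonicity and scaling step: making rigorous that $Tr\,\cF^2(\cB_{\beta,0},e\,r_e(t)K^-_\beta;E^\pm_{-1})\leq (e/(1-e))^2 f^\pm(\beta)$. This requires that $\cF^2$ be genuinely monotone and homogeneous in the density argument when that argument is sign-definite ($K^-_\beta\leq 0$); one must check that the quadratic form $Tr\,\cF^2$ behaves well under the scalar multiplier $e\,r_e(t)$, which is $t$-dependent rather than constant, so the clean homogeneity $\cF^2(\cB_{\beta,0},cK)=c^2\cF^2(\cB_{\beta,0},K)$ only holds for constant $c$ and must be upgraded to an inequality using the pointwise bound $0\leq e\,r_e(t)\leq e/(1-e)$ together with the sign-definiteness of $K^-_\beta$ and the positivity of $f^\pm(\beta)$. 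Verifying this comparison carefully — and confirming that $\pm 1$-nondegeneracy of $\gamma_{\beta,e}$ genuinely follows so that the index is constant along the homotopy $s\mapsto \cB_{\beta,0}+sD^-_{\beta,e}$ — is where the real work lies; the reduction to the index criterion itself is routine given the earlier results.
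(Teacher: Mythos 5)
There is a genuine gap in your reduction. You route the argument through the Proposition preceding the theorem (equivalently Corollary \ref{cor5.6} and Lemma \ref{thm5.3}), which asserts the \emph{equality} $i_{-1}(\ga_{\beta,e})=i_{-1}(\ga_{\beta,0})$; but that statement needs two-sided control, i.e.\ trace bounds for \emph{both} $D^+_{\beta,e}$ and $D^-_{\beta,e}$ (via the sandwich $D^-\leq D\leq D^+$ and property (IV)). The hypothesis of the theorem only involves $f^\pm(\beta)=Tr\,\cF^2(\cB_{\beta,0},K^-_\beta;E^\pm_{-1})$, i.e.\ only the negative part; nothing is assumed about $Tr\,\cF^2(\cB_{\beta,0},K^+_\beta;E^\pm_{-1})$, and your parenthetical remark that ``the $+1$ index is trivially preserved'' does not supply the missing upper bound on $i_{-1}$, which would have to come from the positive part of the perturbation. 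As written, your invocation of the Proposition cannot be completed. The paper's proof is deliberately one-sided: from $Tr\,\cF^2(\cB_{\beta,0},e(1-e)^{-1}K^-_\beta;E^\pm_{-1})=(e/(1-e))^2f^\pm(\beta)<1$ it gets $\cI(A|_{E^\pm_{-1}},A-\cB_{\beta,0}-e(1-e)^{-1}K^-_\beta|_{E^\pm_{-1}})=\cI(A|_{E^\pm_{-1}},A-\cB_{\beta,0}|_{E^\pm_{-1}})$, and then uses the pointwise inequality $D_{\beta,e}=e\cos(t)r_e(t)K_\beta\geq e(1-e)^{-1}\cos^-(t)K_\beta$ together with monotonicity of the relative Morse index to conclude only $i_{-1}(\ga_{\beta,e})\geq i_{-1}(\ga_{\beta,0})=2$. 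That one-sided inequality suffices, because $i_1(\ga_{\beta,e})=0$ for all $(\beta,e)$ and the criterion \eqref{5a.26} then gives $e(\ga_{\beta,e}(2\pi))\geq 2|i_{-1}-i_1|\geq 4$, forcing all four multipliers onto $\dbU$. You should restructure your argument around this inequality rather than an index equality.

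A secondary point: your ``key step'' of proving $Tr\,\cF^2(\cB_{\beta,0},e\,r_e(t)K^-_\beta;E^\pm_{-1})\leq(e/(1-e))^2f^\pm(\beta)$ by a $t$-dependent comparison and trace monotonicity is not needed and is exactly the difficulty the paper's argument is designed to avoid. The paper never estimates the trace of $\cF^2$ with the actual density $D^-_{\beta,e}$: it scales $K^-_\beta$ by the \emph{constant} $e/(1-e)$, where degree-two homogeneity of $Tr\,\cF^2$ in the density gives the bound exactly, and then performs the comparison between $e(1-e)^{-1}K^-_\beta$ and $D_{\beta,e}$ at the level of the relative Morse index (property (IV)), not at the level of traces. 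Your algebra relating $(e/(1-e))^2f^\pm(\beta)<1$ to $e<(1+f^\pm(\beta)^{1/2})^{-1}$ is correct and is the same computation the paper uses.
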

\begin{proof}
The inequality $0\leq e<\min\{(1+f^\pm(\beta)^{\frac{1}{2}})^{-1} \}$ implies that $Tr(\cF^2(\cB_{\beta,0}, e(1-e)^{-1}K_{\beta}^-;E_{-1}^\pm))<1$, then
$(A-\cB_{\beta,0}-e(1-e)^{-1}K_{\beta}^-)|_{E^{\pm}_{-1}}$ is non-degenerate, hence we have $I(A|_{E^{\pm}_{-1}},A-\cB_{\beta,0}-e(1-e)^{-1}K_{\beta}^-|_{E^{\pm}_{-1}})=I(A|_{E^{\pm}_{-1}},A-\cB_{\beta,0}|_{E^{\pm}_{-1}})$. Moreover we have
\bea i_{-1}(\gamma_{\beta,e}) &=& I(A|_{E^{+}_{-1}},A-\cB_{\beta,e}|_{E^{+}_{-1}})+I(A|_{E^{-}_{-1}},A-\cB_{\beta,e}|_{E^{-}_{-1}})  \nonumber \\
&\geq &I(A|_{E^{+}_{-1}},A-\cB_{\beta,0}-e(1-e)^{-1}K_{\beta}^{-}|_{E^{+}_{-1}})+
I(A|_{E^{-}_{-1}},A-\cB_{\beta,0}-e(1-e)^{-1}K_{\beta}^{-}|_{E^{-}_{-1}}) \nonumber \\
&=& I(A|_{E^{+}_{-1}},A-\cB_{\beta,0}|_{E^{+}_{-1}})+I(A|_{E^{-}_{-1}},A-\cB_{\beta,0}|_{E^{-}_{-1}}) \nonumber \\
&=& i_{-1}(\gamma_{\beta,0})=2.\nonumber \eea So $e(\gamma_{\beta,e})\geq 2|i_{1}(\gamma_{\beta,e})-i_{-1}(\gamma_{\beta,e})|=4.$ This complete the proof.
\end{proof}
To compute $f^\pm(\beta)$,
let $\{e_j\}_{j=1}^4$ be the standard basis of $\R^4$, then the frames of $V^+(N)$ and $V^-(N)$ could be given by $(e_1,e_4)$ and  $(e_2,e_3)$ separately. Obviously, $\ga_{\beta,0}(t)=\exp(J\cB_{\beta,0}t)$.
We first consider $f^+(\beta)$, in this case, the boundary conditions is given by $x(0)\in V^-(N)$ and $x(\pi)\in V^+(N)$.  Then we
have $$P^+=(e_2,e_3, \exp(-J\cB_{\beta,0}\pi)e_1, \exp(-J\cB_{\beta,0}\pi)e_4 ),$$ and $Q^+_d=(e_2,e_3,0,0)$.
Setting  $$\hat{K}_\beta(t)=\exp(-J\cB_\beta(t))JK_\beta\exp(J\cB_\beta(t)),$$
we have
\bea M_1=\int_0^\pi\cos^-(t)\hat{K}_\beta(t)dt=\int_{\pi/2}^{\pi}\cos(t)\hat{K}_\beta(t)dt,  \nonumber\eea
and
\bea M_2 &=& \int_0^\pi\cos^-(t_1)\hat{K}_\beta(t_1)dt_1\int_0^{t_1}\cos^-(t_2)\hat{K}_\beta(t_2)dt_2 \nonumber \\ &=& \int_{\pi/2}^{\pi}\cos(t_1)\hat{K}_\beta(t_1)dt_1\int_0^{t_1}\cos(t_2)\hat{K}_\beta(t_2)dt_2. \nonumber \eea  We have
\bea G^+_1=(P^+)^{-1}M_1Q^+_d, \quad   G^+_2=(P^+)^{-1}M_2Q^+_d,\quad f^+(\beta)=Tr (G^+_1)^2-2 Tr (G^+_2) \nonumber\eea
Similarly,  for
 $f^-(\beta)$, the boundary conditions is given by $x(0)\in V^+(N)$ and $x(\pi)\in V^-(N)$, and $$P^-=(e_1,e_4, \exp(-J\cB_{\beta,0}\pi)e_2, \exp(-J\cB_{\beta,0}\pi)e_3 ),$$ and $Q^-_d=(e_1,e_4,0,0)$. We have
\bea G^-_1=(P^-)^{-1}M_1Q^-_d, \quad   G^-_2=(P^-)^{-1}M_2Q^-_d,\quad f^-(\beta)=Tr (G^-_1)^2-2 Tr (G^-_2). \nonumber\eea

As some basic computation  given in \cite{HOW}, with the help of matlab, we have

\begin{figure}[H]
 \centering
   \includegraphics[height=0.49\textwidth,width=0.65\textwidth]{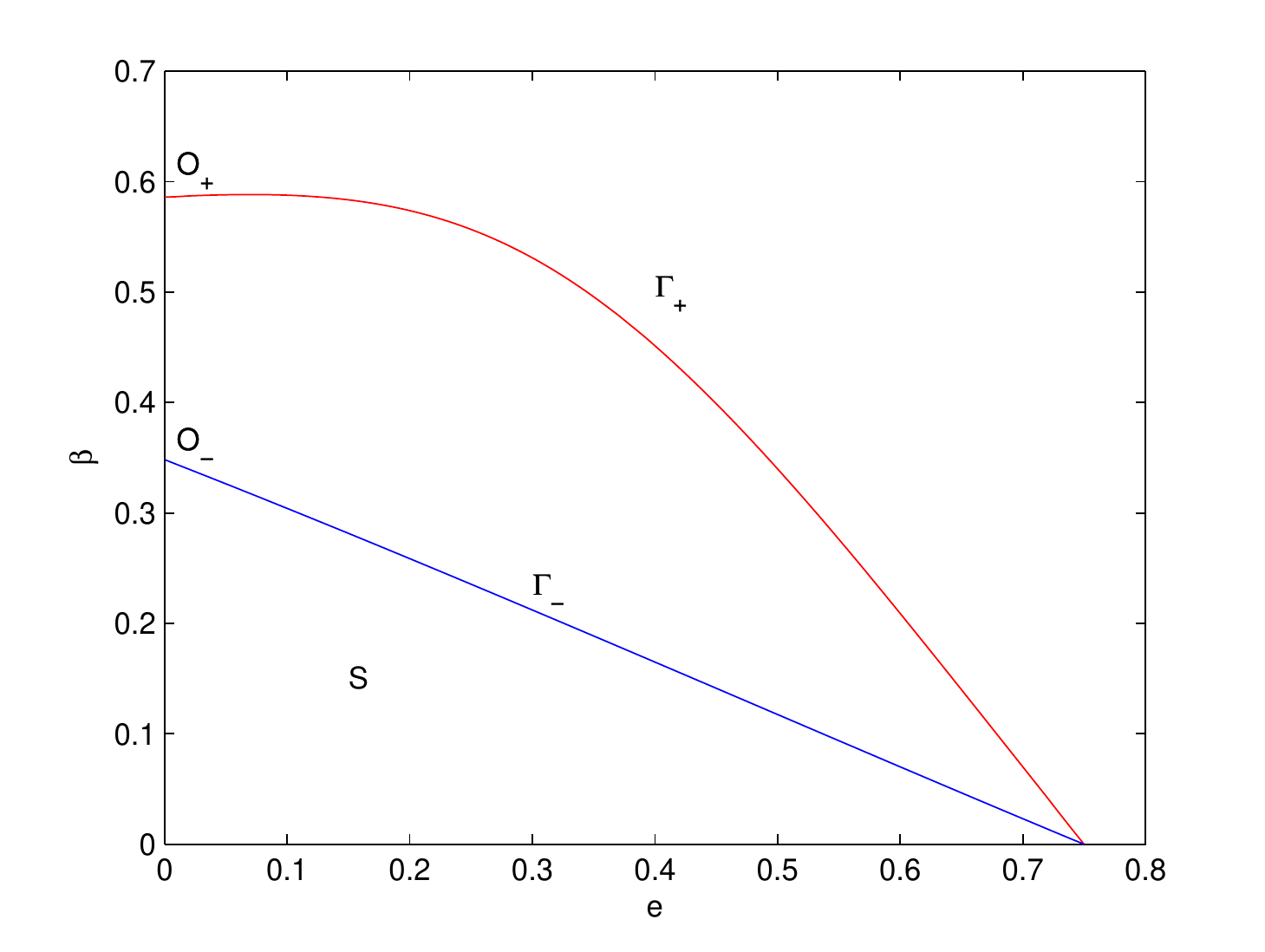}
     \caption{The stable region S.}
\end{figure}
\noindent In Figure 1, the points $O_{-}\approx(0, 0.3483)$, $O_{+}\approx(0, 0.5858)$,
$\Gamma_{-}=\{(\beta,e)|e=1/(1+\sqrt{f^{-}(\beta)})\}$, $\Gamma_{+}=\{(\beta,e)|e=1/(1+\sqrt{f^{+}(\beta)})\}$.

\begin{rem} \label{a5.5}  compare with the result in \cite{HOW}.

\begin{figure}[H]
 \centering
   \includegraphics[height=0.49\textwidth,width=0.65\textwidth]{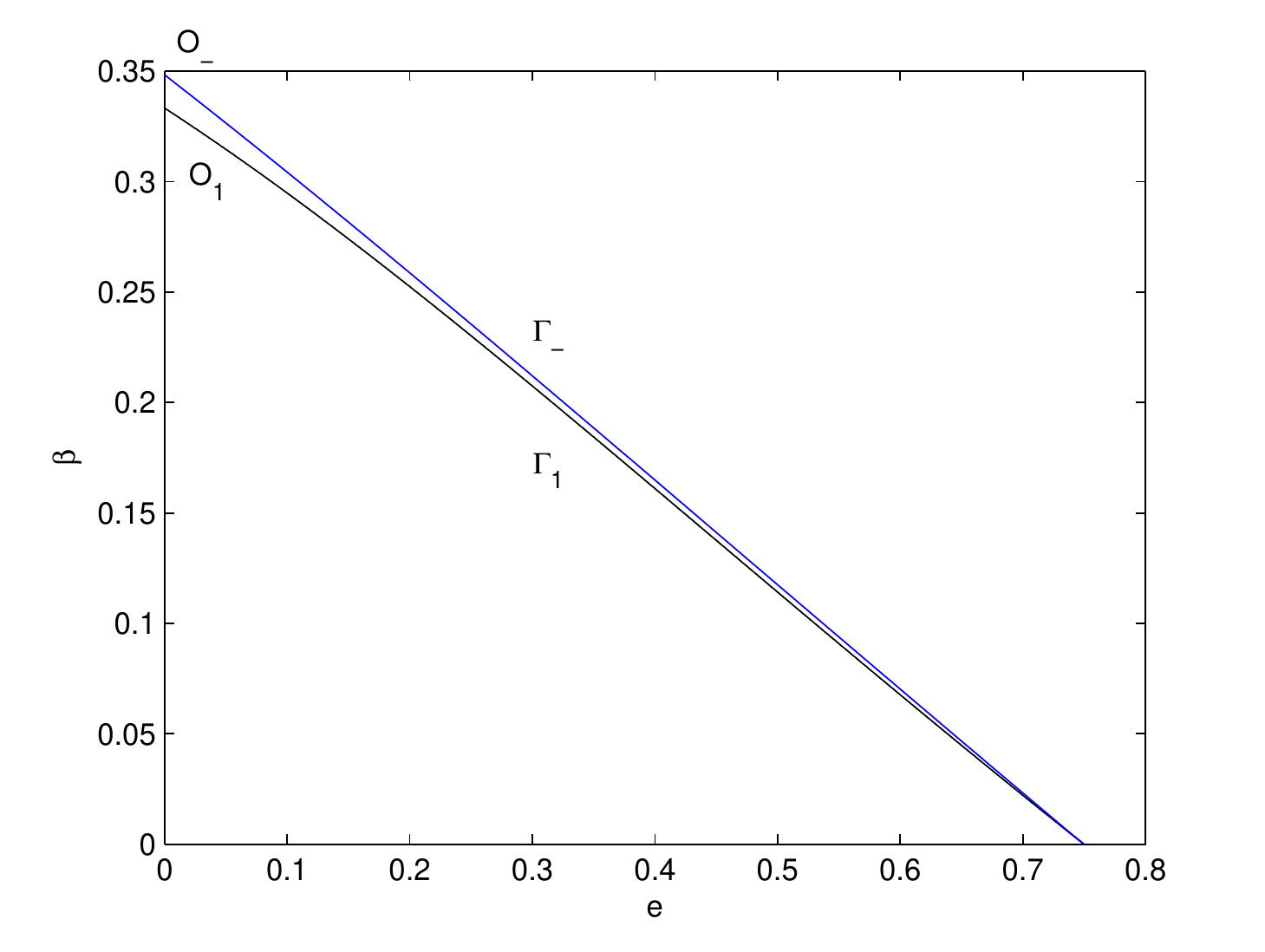}
     \caption{The estimation curve by different methods.}
\end{figure}
\noindent In Figure 2, the points $O_{-}\approx(0, 0.3483)$, $O_{1}\approx(0, 0.3333)$, $\Gamma_{1}$ is given in \cite{HOW}.
From this picture, it easy to know that we can get a better estimation of the stability region by using the trace formulas in this
paper. The reason is that
\bea Tr(\cF^2(\cB_{\beta,0},e(1-e)^{-1}K_{\beta}^-;E_{-1}))
=Tr(\cF^2(\cB_{\beta,0},e(1-e)^{-1}K_{\beta}^-;E^+_{-1})) + Tr(\cF^2(\cB_{\beta,0},e(1-e)^{-1}K_{\beta}^-;E^-_{-1})).\nonumber
\eea
In \cite{HOW}, we need to estimate  $Tr(\cF^2(\cB_{\beta,0},e(1-e)^{-1}K_{\beta}^-;E_{-1}))<1$, but in this paper
we only need to estimate $Tr(\cF^2(\cB_{\beta,0},e(1-e)^{-1}K_{\beta}^-;E^+_{-1}))<1$ and $ Tr(\cF^2(\cB_{\beta,0},e(1-e)^{-1}K_{\beta}^-;E^-_{-1}))<1$. Obviously this condition is weaker, hence we can get a better result.

\end{rem}

\begin{rem} \label{a5.5}
\label{a5.6} In \cite{HLS}, there exist two $-1$-degenerate curves on the stability bifurcation diagram of Lagrangian solution.
This two curves corresponding to spaces $E_{-1}^{+}$ and $E_{-1}^{-}$, hence curves $\Gamma_{+}$ and $\Gamma_{+}$ give a lower bound
of this two $-1$ degenerate curves respectively.
\end{rem}

\subsection{Elliptic-Hyperbolic region of elliptic Euler orbits}\label{sec6.3}

The Euler orbits have been studied in \cite{MSS1}, \cite{LZhou}, \cite{HO}, in this
case,  $R=diag(-\delta,2\delta+3)$, where $\delta\in[0,7]$ only
depends on mass $m_1,m_2,m_3$. Please refer to Appendix A of \cite{MSS1}
for the details.  Although there is no physical meaning for $\delta>7$, we will
assume $\delta\geq 0$ to make the mathematical theory complete.

Let $\ga_{\delta,e}$ be the fundamental solutions of
$\mathcal{B}_{\delta,e}(t)$ which is given by (\ref{msf}), that is
$\dot{\ga}_{\delta,e}=J_2\mathcal{B}_{\delta,e}(t)\ga_{\delta,e}$, $t\in[0,2\pi]$, $\ga_{\delta,e}(0)=I_4$.
 The  stability problem can be studied via the
 Maslov-type index \cite{LZhou},  then we first
review  their results. For any $j\in\mathbb{N}$, there exists
$1$-degenerate curves $\Gamma_j=Gr(\varphi_j(e))$, and we also let
$\Gamma_0=Gr(\varphi_0(e))$ with $\varphi_0(e)=0$.   Then $\ga_{\delta,e}$
only degenerates at $ \cup_{j=1}^\infty \Gamma_j $ and
$dim\ker(\ga_{\delta,e}(2\pi)-I_4)=2 $ if
$(\delta,e)\in\cup_{j=1}^\infty \Gamma_j$. The Maslov-type index
satisfies \bea i_1(\ga_{\delta,e})=2j+3, \quad if\quad
\varphi_j(e)<\delta\leq\varphi_{j+1}(e),\quad j\in \mathbb{N}\cup\{0\}.\nonumber
\eea  Similarly, for $\forall j\in\mathbb{N}$, there exists pair
$-1$-degenerate curves $\Upsilon_j^\pm=Gr(\psi_j^\pm(e))$.

 Let $\psi_j^s(e)=min\{\psi_j^+(e),\psi_j^-(e)\}$ and
$\psi_j^l(e)=max\{\psi_j^+(e),\psi_j^-(e)\}$. Moreover, we set
$\psi_0^l=\psi_0^s=0$,  then for $k\in\mathbb{N}$ we have \bea
i_{-1}(\ga_{\delta,e})=\left\{\begin{array}{ll} 2j, & \quad
           {\mathrm if}\; \delta\in (\psi_{j-1}^l,\psi_{j}^s],  \\
           \\
 2j+1, & \quad {\mathrm if}\; \delta\in(\psi_j^s,\psi_{j}^l] .\end{array}\right.\lb{l3}\nonumber\eea
Direct computation shows that $\psi_j^+(0)=\psi_j^-(0)$, but it is not
clear if, for $e>0$,  there exist other intersection points.  There
is a monotonicity property for Maslov-type index, that is for
$\omega\in\mathbb{ U}$ \bea i_\omega(\ga_{\delta_1,e})\leq
i_\omega(\ga_{\delta_2,e}),\,\ if \,\ \delta_1\leq \delta_2.
\lb{mono}\nonumber\eea
For any $e\in[0,1)$, the $\pm1$ degenerate curves satisfies \bea
0<\psi_1^s(e)\leq\psi_1^l(e)<\varphi_1(e)<\psi_2^s(e)\leq\psi_2^l(e)<\cdots
\psi_j^s(e)\leq\psi_j^l(e)<\varphi_j(e)<\psi_j^s(e)\leq\psi_j^l(e)<\cdots , \lb{e4}\nonumber
\eea
and for $j\in \mathbb{N}$
\bea
\varphi_j(0)=\frac{j-3+\sqrt{9j^4-14j^2+9}}{4},\ \ \psi_j^s(0)=\psi_j^l(0)=\frac{(j+\frac{1}{2})^2-3+\sqrt{9(j+\frac{1}{2})^4-14(j+\frac{1}{2})^2+9}}{4}.\nonumber
\eea
Moreover for the region between the $\pm1$-degenerate curves, $\ga_{\delta,e}(2\pi)$ is elliptic-hyperbolic
and for  the region between the pairs of  $-1$-degenerate curves $\ga_{\delta,e}(2\pi)$
is hyperbolic.

We always set $\psi_k^+$  to be the degenerate curve in the sense that
$V^-(N)\cap\ga_{\delta,e}(2\pi)V^+(N)$ nontrivial and
similarly $\psi_k^-$ to be the degenerate curve in the sense that
$V^+(N)\cap\ga_{\delta,e}(2\pi)V^-(N)$ nontrivial.

Set
\bea D_{\delta,e}(t)=\cB_{\delta,e}(t)-\cB_{\delta,0}(t)=e\cos(t) r_e(t) K_\delta ,\nonumber \eea
where $ K_\delta= diag (0,0,-\delta, 2\delta+3)$. For $t\in[0,\pi]$, we set   $ K^+_\delta(t)= diag (0,0, -\cos^-(t)\delta,\cos^+(t)(2\delta+3))$ and  $ K^-_\delta(t)= diag (0,0,-\cos^+(t)\delta, \cos^-(t)(2\delta+3))$.
then $$  D^\pm_{\delta,e}=e r_e(t) K_\delta^\pm.$$
For $E=E^\pm_{-1}$, let
 $$ g_{1}^\pm(\delta)=Tr \cF^2(\cB_{\delta,0},K_\delta^+,E_{-1}^{\pm} ), \quad  g_{2}^\pm(\delta)=Tr \cF^2(\cB_{\delta,0},K_\delta^-,E_{-1}^{\pm} ). $$

\begin{thm}\label{th6.5}
For $\delta\in[0,\psi_1^s(0))$, $\gamma_{\delta,e}(2\pi)$ is elliptic-hyperbolic, if \bea 0\leq e<\min\{ (1+g_1^\pm(\delta)^{\frac{1}{2}})^{-1}\}\nonumber \eea
\end{thm}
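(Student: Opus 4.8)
The plan is to run the argument of Theorem~\ref{th2.1} on the two brake pieces $E_{-1}^\pm$, but now with the non-negative part $K_\delta^+\ge 0$ in place of $K_\beta^-$, and to convert the resulting index inequality into elliptic--hyperbolicity through the criterion (\ref{5a.26}). First I would fix the two boundary indices. Since $\delta\in[0,\psi_1^s(0))\subset[0,\varphi_1(0))$, the index formulas recalled in \S\ref{sec6.1} give $i_1(\ga_{\delta,e})=3$ throughout the admissible $e$-range (the symplectic path does not cross the curve $\Gamma_1=Gr(\varphi_1)$), while at $e=0$ one has $i_{-1}(\ga_{\delta,0})=2$.

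Next I would translate the hypothesis. From $\cF(\cB_{\delta,0},cD)=c\,\cF(\cB_{\delta,0},D)$ we get $Tr\,\cF^2(\cB_{\delta,0},cD;E)=c^2\,Tr\,\cF^2(\cB_{\delta,0},D;E)$, so with $c=e(1-e)^{-1}$ the bound $0\le e<\min\{(1+g_1^\pm(\delta)^{1/2})^{-1}\}$ is equivalent to $Tr\,\cF^2(\cB_{\delta,0},e(1-e)^{-1}K_\delta^+;E_{-1}^\pm)<1$ for both signs. Because $K_\delta^+\ge 0$ is sign-definite, $\cF(\cB_{\delta,0},e(1-e)^{-1}K_\delta^+;E_{-1}^\pm)$ is similar to the self-adjoint operator $|K_\delta^+|^{1/2}(A-\cB_{\delta,0})^{-1}|K_\delta^+|^{1/2}$ and hence has real spectrum; thus $Tr\,\cF^2<1$ forces its spectral radius below $1$, so $(A-\cB_{\delta,0}-s\,e(1-e)^{-1}K_\delta^+)|_{E_{-1}^\pm}$ stays non-degenerate for all $s\in[0,1]$. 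As in the proof of Theorem~\ref{th2.1} this gives
\[
\cI\big(A|_{E_{-1}^\pm},\,A-\cB_{\delta,0}-e(1-e)^{-1}K_\delta^+|_{E_{-1}^\pm}\big)=\cI\big(A|_{E_{-1}^\pm},\,A-\cB_{\delta,0}|_{E_{-1}^\pm}\big).
\]

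I would then feed this into the monotonicity property (IV) of the relative Morse index. Since $D_{\delta,e}^-\le 0$ we have $D_{\delta,e}=D_{\delta,e}^++D_{\delta,e}^-\le D_{\delta,e}^+=e\,r_e(t)K_\delta^+$, and since $K_\delta^+\ge 0$ and $r_e(t)\le(1-e)^{-1}$ we have $D_{\delta,e}^+\le e(1-e)^{-1}K_\delta^+$; hence $\cB_{\delta,e}\le \cB_{\delta,0}+e(1-e)^{-1}K_\delta^+$. Monotonicity then yields, on each piece and after summing,
\[
i_{-1}(\ga_{\delta,e})=\sum_{\pm}\cI\big(A|_{E_{-1}^\pm},A-\cB_{\delta,e}|_{E_{-1}^\pm}\big)\le \sum_{\pm}\cI\big(A|_{E_{-1}^\pm},A-\cB_{\delta,0}|_{E_{-1}^\pm}\big)=i_{-1}(\ga_{\delta,0})=2.
\]
Combined with $i_1(\ga_{\delta,e})=3$, the criterion (\ref{5a.26}) gives $e(\ga_{\delta,e}(2\pi))\ge 2|i_1-i_{-1}|\ge 2$, so $\ga_{\delta,e}(2\pi)$ has at least two eigenvalues on $\mathbb U$ and is not hyperbolic.

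The step I expect to be the real obstacle is upgrading ``not hyperbolic'' to ``elliptic--hyperbolic'', i.e.\ excluding $e(\ga_{\delta,e}(2\pi))=4$. Here $n=2$, so $e(\ga_{\delta,e}(2\pi))\in\{0,2,4\}$; the value $4$ (full ellipticity) would force $i_{-1}=1$ through the same criterion, which is incompatible with the structure of the collinear configuration (in the strip below the first pair of $-1$-degenerate curves the Euler orbit always retains a hyperbolic pair, cf.\ the classification of \cite{MSS1,LZhou} recalled above). Granting that $\ga_{\delta,e}(2\pi)$ is never fully elliptic in this range, $e(\ga_{\delta,e}(2\pi))\ge 2$ forces $e(\ga_{\delta,e}(2\pi))=2$, which is exactly elliptic--hyperbolicity. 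Besides this last exclusion, the points needing care are the sign bookkeeping for the mixed-sign matrix $K_\delta$ --- making sure $e(1-e)^{-1}K_\delta^+$ really dominates $D_{\delta,e}$ --- and the self-adjointness/spectral-radius argument that turns the single trace bound on the definite perturbation $K_\delta^+$ into non-degeneracy of the whole homotopy.
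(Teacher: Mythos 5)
Your proposal reproduces the paper's argument for all of the substantive analytic steps: the scaling identity that turns the hypothesis into $Tr\,\cF^2(\cB_{\delta,0},e(1-e)^{-1}K_\delta^+;E_{-1}^\pm)<1$, the deduction (via the sign-definiteness of $K_\delta^+$ and the real-spectrum/spectral-radius argument behind Theorem \ref{thm5.2}) that $(A-\cB_{\delta,0}-se(1-e)^{-1}K_\delta^+)|_{E_{-1}^\pm}$ is non-degenerate for all $s\in[0,1]$, hence $\cI(A|_{E_{-1}^\pm},A-\cB_{\delta,0}-e(1-e)^{-1}K_\delta^+|_{E_{-1}^\pm})=\cI(A|_{E_{-1}^\pm},A-\cB_{\delta,0}|_{E_{-1}^\pm})$, and the monotonicity comparison $D_{\delta,e}\le e\,r_e(t)K_\delta^+\le e(1-e)^{-1}K_\delta^+$ yielding $i_{-1}(\ga_{\delta,e})\le i_{-1}(\ga_{\delta,0})=2$. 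All of that is correct and is exactly how the paper proceeds.

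The endgame is where your argument breaks. The criterion (\ref{5a.26}) is a one-sided inequality: it bounds $e(\ga_{\delta,e}(2\pi))$ from \emph{below} by $2|i_1-i_{-1}|$, so it cannot be used to exclude the value $e(\ga_{\delta,e}(2\pi))=4$, and your claim that full ellipticity ``would force $i_{-1}=1$ through the same criterion'' does not follow --- a fully elliptic matrix is perfectly compatible with $|i_1-i_{-1}|=1$. You then patch this by ``granting'' that the Euler orbit is never fully elliptic below the first pair of $-1$-degenerate curves, citing the classification of \cite{MSS1,LZhou}; but that classification is the entire content of the paper's final step, and once you invoke it the detour through (\ref{5a.26}) (and the claim $i_1(\ga_{\delta,e})=3$) becomes superfluous. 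The paper's conversion of the index bound into elliptic-hyperbolicity is direct: since $i_{-1}(\ga_{\delta,e})\le 2$, while the index formulas give $i_{-1}\ge 2$ for every $\delta>0$, the point $(\delta,e)$ must lie in the strip between $Gr(\psi_0^l)$ and $Gr(\psi_1^s)$, and by \cite{LZhou} the monodromy is elliptic-hyperbolic throughout that strip. So your computation is right, but the last step should be this region-location argument rather than the eigenvalue-counting inequality.
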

\begin{proof}
The inequality $0\leq e<\min\{(1+g_1^\pm(\delta)^{\frac{1}{2}})^{-1}\}$ implies that $Tr(\cF^2(\cB_{\delta,0},\frac{e}{1-e}K_{\delta}^+;E_{-1}^\pm))<1$, then
$(A-\cB_{\delta,0}-\frac{e}{1-e}K_{\delta}^+)|_{E^{\pm}_{-1}}$ is non-degenerate, hence we have $ I(A|_{E^{\pm}_{-1}},A-\cB_{\delta,0}-\frac{e}{1-e}K_{\delta}^+|_{E^{\pm}_{-1}})=I(A|_{E^{\pm}_{-1}},A-\cB_{\delta,0}|_{E^{\pm}_{-1}})$. Then
\bea i_{-1}(\gamma_{\delta,e}) &=& I(A|_{E^{+}_{-1}},A-\cB_{\delta,e}|_{E^{+}_{-1}})+I(A|_{E^{-}_{-1}},A-\cB_{\delta,e}|_{E^{-}_{-1}}) \nonumber \\
& \leq & I(A|_{E^{+}_{-1}},A-\cB_{\delta,0}-\frac{e}{1-e}K_{\delta}^{+}|_{E^{+}_{-1}})+
I(A|_{E^{-}_{-1}},A-\cB_{\delta,0}-\frac{e}{1-e}K_{\delta}^{+}|_{E^{-}_{-1}}) \nonumber \\
&=& I(A|_{E^{+}_{-1}},A-\cB_{\delta,0}|_{E^{+}_{-1}})+I(A|_{E^{-}_{-1}},A-\cB_{\delta,0}|_{E^{-}_{-1}}) \nonumber \\
&=& i_{-1}(\gamma_{\delta,0})=2. \nonumber\eea So $i_{-1}(\gamma_{\delta,e})$ does not increase and it implies that the region
$\{(\delta,e)|\ \ 0\leq e<\min\{(1+g_1^\pm(\delta)^{\frac{1}{2}})^{-1}\}\}$ is between curves $Gr(\psi_{0}^l(e))$ and $Gr(\psi_{1}^{s}(e))$, from \cite{LZhou},
we know it's elliptic-hyperbolic between this two curve.
\end{proof}
\begin{thm}\label{th6.6}
For $\delta\in(\psi_j^s(0),\psi_{j+1}^s(0)), j\in \mathbb{N}$, $\gamma_{\delta,e}(2\pi)$ is elliptic-hyperbolic, if
\bea 0\leq e<\min\{(1+g_1^\pm(\delta)^{\frac{1}{2}})^{-1},(1+g_2^\pm(\delta)^{\frac{1}{2}})^{-1}\}\nonumber
\eea
\end{thm}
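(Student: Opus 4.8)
The plan is to lock the $\omega=-1$ Maslov-type index $i_{-1}(\gamma_{\delta,e})$ from \emph{both} sides and show it stays equal to its value at $e=0$ throughout the admissible range of $e$; this mirrors the proof of Theorem \ref{th6.5}, the new feature being that for $j\ge 1$ the lower $-1$-degenerate curve $\psi_j^l(e)$ is no longer trivial, so $i_{-1}$ must be controlled from below as well as from above. For $\delta\in(\psi_j^s(0),\psi_{j+1}^s(0))$, evaluating the index formula from \cite{LZhou} at $e=0$, where $\psi_k^s(0)=\psi_k^l(0)$, gives $i_{-1}(\gamma_{\delta,0})=2j+2$. I would record the two monotone comparisons of data: since $K_\delta^+\ge 0$, $K_\delta^-\le 0$ and $r_e(t)=(1+e\cos t)^{-1}\le (1-e)^{-1}$, the perturbation $D_{\delta,e}=er_e(t)(K_\delta^++K_\delta^-)$ satisfies $\frac{e}{1-e}K_\delta^-\le D_{\delta,e}\le \frac{e}{1-e}K_\delta^+$ as paths of symmetric matrices, so that $\cB_{\delta,0}+\frac{e}{1-e}K_\delta^-\le\cB_{\delta,e}\le\cB_{\delta,0}+\frac{e}{1-e}K_\delta^+$. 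Throughout I would use the brake-symmetry decomposition of Subsection \ref{sec6.1}, $i_{-1}(\gamma_{\delta,e})=\cI(A|_{E^+_{-1}},A|_{E^+_{-1}}-\cB_{\delta,e})+\cI(A|_{E^-_{-1}},A|_{E^-_{-1}}-\cB_{\delta,e})$, so that everything is done on $E^\pm_{-1}$ separately.

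For the upper bound I would argue as in Theorem \ref{th6.5}. Because $\cF(B,\cdot)$ is linear in its second slot, $Tr\,\cF^2(\cB_{\delta,0},\frac{e}{1-e}K_\delta^+;E^\pm_{-1})=(\frac{e}{1-e})^2 g_1^\pm(\delta)$, and the hypothesis $e<(1+g_1^\pm(\delta)^{1/2})^{-1}$ is precisely $(\frac{e}{1-e})^2 g_1^\pm(\delta)<1$. As $\frac{e}{1-e}K_\delta^+\ge 0$, the Corollary to Theorem \ref{thm5.2} forces $\cI(A|_{E^\pm_{-1}}-\cB_{\delta,0},A|_{E^\pm_{-1}}-\cB_{\delta,0}-\frac{e}{1-e}K_\delta^+)=0$, whence, by property (I), $\cI(A|_{E^\pm_{-1}},A|_{E^\pm_{-1}}-\cB_{\delta,0}-\frac{e}{1-e}K_\delta^+)=\cI(A|_{E^\pm_{-1}},A|_{E^\pm_{-1}}-\cB_{\delta,0})$. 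Combining monotonicity (property (IV)) with $\cB_{\delta,e}\le \cB_{\delta,0}+\frac{e}{1-e}K_\delta^+$ and summing over $\pm$ gives $i_{-1}(\gamma_{\delta,e})\le i_{-1}(\gamma_{\delta,0})=2j+2$.

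The new ingredient is the symmetric lower bound. Using $\cB_{\delta,e}\ge \cB_{\delta,0}+\frac{e}{1-e}K_\delta^-$, property (IV) yields $\cI(A|_{E^\pm_{-1}},A|_{E^\pm_{-1}}-\cB_{\delta,e})\ge \cI(A|_{E^\pm_{-1}},A|_{E^\pm_{-1}}-\cB_{\delta,0}-\frac{e}{1-e}K_\delta^-)$. Now $\frac{e}{1-e}K_\delta^-\le 0$ and $(\frac{e}{1-e})^2 g_2^\pm(\delta)<1$ follows from $e<(1+g_2^\pm(\delta)^{1/2})^{-1}$, so the $D<0$ version of the Corollary to Theorem \ref{thm5.2} again makes the relevant relative Morse index vanish, and the right-hand side equals $\cI(A|_{E^\pm_{-1}},A|_{E^\pm_{-1}}-\cB_{\delta,0})$. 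Summing over $\pm$ gives $i_{-1}(\gamma_{\delta,e})\ge 2j+2$. Together with the upper bound this pins $i_{-1}(\gamma_{\delta,e})\equiv 2j+2$ on the whole admissible range. By the index formula of \cite{LZhou}, the value $2j+2$ is attained exactly on $\delta\in(\psi_j^l(e),\psi_{j+1}^s(e)]$; since the path never crosses a $-1$-degenerate curve it stays in this strip, which avoids the hyperbolic pair-regions $(\psi_k^s,\psi_k^l)$ and therefore consists of elliptic-hyperbolic points, giving the conclusion.

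I expect the main obstacle to be the lower bound. Concretely, one must justify that $\frac{e}{1-e}K_\delta^-$ really dominates $D_{\delta,e}$ \emph{from below} as a symmetric path (correctly tracking the sign of $K_\delta^-$ against the bound $r_e\le(1-e)^{-1}$), and that the $D<0$ branch of the Corollary to Theorem \ref{thm5.2} applies on each $E^\pm_{-1}$ after the quadratic rescaling of the trace; this direction is absent from Theorem \ref{th6.5}, where the lower boundary $\psi_0^l=0$ is fixed. A secondary genuine point is the final geometric step: converting ``$i_{-1}$ is constant'' into ``elliptic-hyperbolic'' relies on the precise location of the hyperbolic region strictly between the pairs $\psi_k^s,\psi_k^l$ from \cite{LZhou}, together with the observation that the admissible region meets only the $+1$-degenerate curve $\varphi_j$ and hence avoids those pair-regions.
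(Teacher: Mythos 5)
Your proposal is correct and follows essentially the same route as the paper: the $g_1^\pm$ condition with $D_{\delta,e}\le \frac{e}{1-e}K_\delta^+$ pins $i_{-1}$ from above (as in Theorem \ref{th6.5}), the $g_2^\pm$ condition with $D_{\delta,e}\ge \frac{e}{1-e}K_\delta^-$ pins it from below (as in Theorem \ref{th2.1}), and constancy of $i_{-1}$ traps the region between $Gr(\psi_j^l)$ and $Gr(\psi_{j+1}^s)$, which is elliptic-hyperbolic by \cite{LZhou}. Your write-up in fact supplies the details (the value $2j+2$ at $e=0$, the quadratic rescaling of the trace, the sign bookkeeping for $K_\delta^-$) that the paper leaves implicit by reference to its earlier proofs.
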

\begin{proof}
For $0\leq e<\min\{(1+g_1^\pm(\delta)^{\frac{1}{2}})^{-1}\}$, like the proof of Theorem \ref{th6.5}, we get
 $i_{-1}(\gamma_{\delta,e})$ does not increase. For $0\leq e<\min\{(1+g_2^\pm(\delta)^{\frac{1}{2}})^{-1}\}$,
like the proof of Theorem \ref{th2.1}, we get index $i_{-1}(\gamma_{\delta,e})$ does not decreasing. So
the region $\{(\delta,e)|\ \ 0\leq e<\min\{1/(1+\sqrt{g_{1}^{\pm}(\delta)}),1/(1+\sqrt{g_{2}^{\pm}(\delta)})\}\}$ must between
curves $Gr(\psi_{j}^l(e))$ and $Gr(\psi_{j+1}^{s}(e))$, from \cite{LZhou},
we know it's elliptic-hyperbolic between this two curve.
\end{proof}
For Lagrangian solution, we have given the deals in computing the function $f^{\pm}(\beta)$. By the same way, we also can compute
$g_{1}^{\pm}(\delta)$ and $g_{2}^{\pm}(\delta)$. With the help of matlab, we have the estimation of the elliptic-hyperbolic(\textbf{EH}) region
of Euler solution.
\begin{figure}[H]
 \centering
   \includegraphics[height=0.49\textwidth,width=0.65\textwidth]{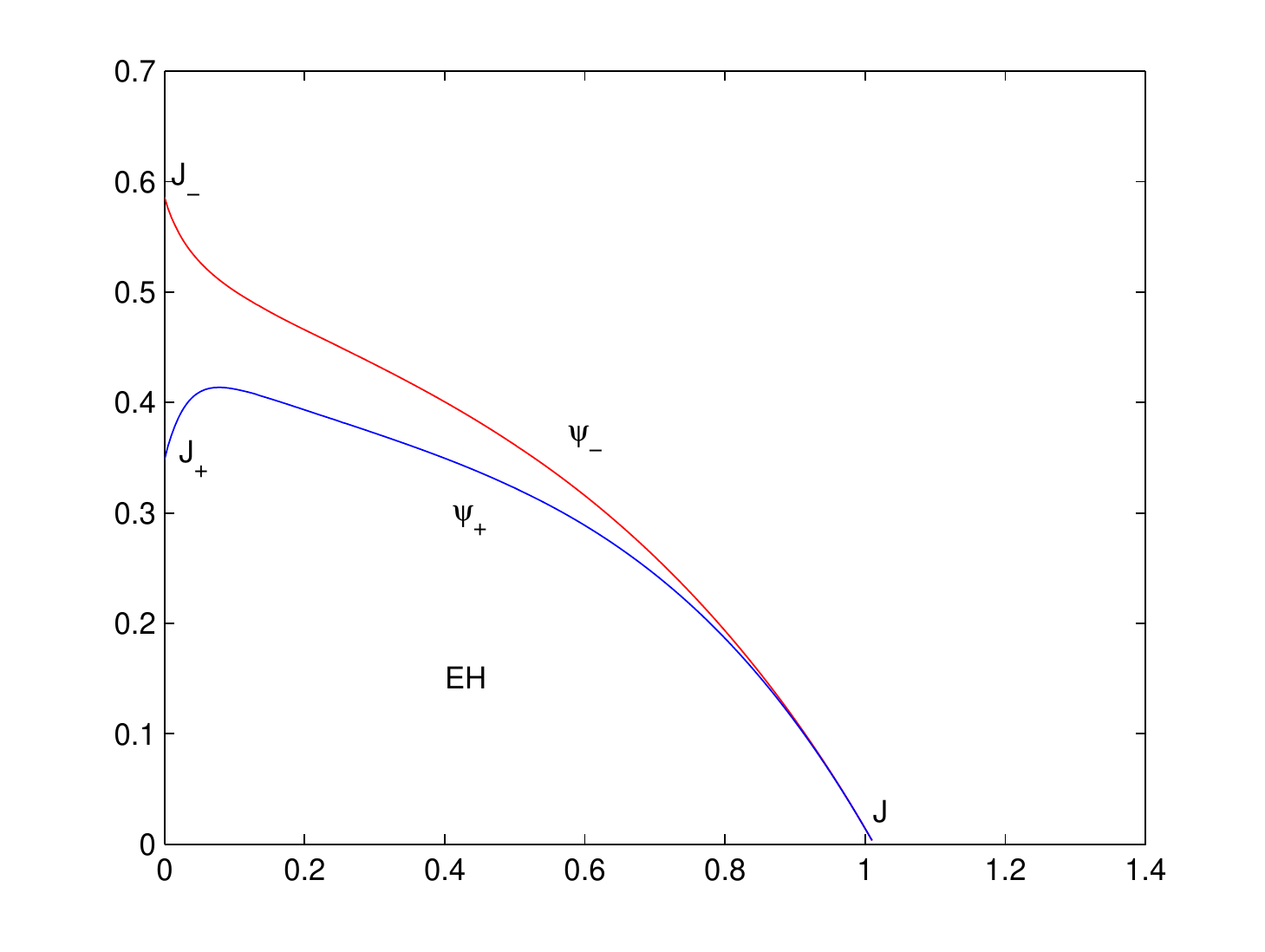}
     \caption{The EH region for the Euler solution with $\delta\in[0,\psi_{1}^s(0))$.}
\end{figure}
In Figure 3, the points $J_{+}\approx(0, 0.3483)$, $J_{-}\approx(0, 0.5858)$,
$\psi_{-}=\{(\beta,e)|e=1/(1+\sqrt{g_{1}^{-}(\beta)})\}$, $\psi_{+}=\{(\beta,e)|e=1/(1+\sqrt{g_{1}^{+}(\beta)})\}$.
\\

\noindent {\bf Acknowledgements.} The  authors sincerely thank Y. Long for his encouragements and interests.

\end{document}